\renewenvironment{proof}[1][Proof]{\textbf{#1.} }{\ \rule{0.5em}{0.5em}}
\DeclareMathOperator{\ad}{ad}
\DeclareMathOperator{\Ad}{Ad}
\DeclareMathOperator{\const}{const}
\DeclareMathOperator{\can}{can}
\DeclareMathOperator{\diag}{diag}
\DeclareMathOperator{\Id}{Id}
\DeclareMathOperator{\im}{Im}
\DeclareMathOperator{\kerr}{Ker}
\DeclareMathOperator{\Lin}{Lin}
\DeclareMathOperator{\rk}{rk}
\newtheorem{theorem}{Theorem}
\newtheorem{pred}{Proposition}
\newtheorem{lemma}{Lemma}
\newtheorem{corollary}{Corollary}
\newtheorem{problem}{Problem}
\newtheorem{remark}{Remark}
\newtheorem{example}{Example}
\newtheorem{vopros}{Question}
\begin{document}

\title
[Killing vector fields of constant length on \dots]
{Killing vector fields of constant length on compact homogeneous Riemannian manifolds}
\
\author{Yu.\,G.~Nikonorov}
\address{Yu.\,G. Nikonorov \newline
South Mathematical Institute of  Vladikavkaz Scientific Centre \newline
of the Russian Academy of Sciences, Vladikavkaz, Markus st. 22, \newline
362027, Russia}
\email{nikonorov2006@mail.ru}

\begin{abstract}
In this paper we present some structural results on the Lie algebras of transitive isometry groups of a general
compact homogenous Riemannian manifold with nontrivial Killing vector fields of constant length.

\vspace{2mm}
\noindent
2000 Mathematical Subject Classification: 53C30 (primary),
53C20, 53C25, 53C35 (secondary).

\vspace{2mm} \noindent Key words and phrases: Clifford-Wolf homogeneous spaces, geodesic orbit spaces, homogeneous spaces, Hermitian symmetric spaces,
homogeneous Riemannian manifolds, Killing vector fields of constant length.
\end{abstract}

\maketitle


\section*{Introduction and the main results}

Recall that a vector field
$X$ on a Riemannian manifold $(M,g)$ is called {\it Killing} if $L_X
g=0$. In this paper, we study {\it Killing vector fields of constant length} on homogeneous Riemannian manifolds.
A detailed study of Killing vector fields of constant length was started in the papers \cite{BerNik2008, BerNik2008n, BerNik2008nn},
although many of the results in this direction have long been known (see a detailed exposition in \cite{BerNik2008}).
It should be noted that there exists a connection between Killing vector fields of constant
length and Clifford-Wolf translations in a Riemannian manifold $(M,g)$.

Recall that a {\it Clifford-Wolf translation} in $(M,g)$ is an
isometry $s$ moving all points in $M$ one and the same distance,
i.~e. $\rho_g\bigl(x,s(x)\bigr) \equiv \const$ for all $x\in M$, where $\rho_g$ means
the inner (length) metric generated by the Riemannian metric
tensor $g$ on $M$. Clifford-Wolf
translations naturally appear in the investigation of homogeneous
Riemannian coverings of homogeneous Riemannian manifolds
\cite{KN,WolfBook}. Clifford-Wolf translations are studied in various papers
(see e.g. \cite{BerNik2008n,F, Wolf61,Wolf62,Ozols1974} for the Riemannian case and \cite{XuDeng2013, XuDeng2014, XuDeng2014n} for
the Finsler case), for a detailed discussion we refer to \cite{BerNik2009} and~\cite{XuWolf2014}.

If a one-parameter isometry group $\gamma(t)$ on $(M,g)$, generated
by a Killing vector field $Z$, consists of Clifford-Wolf
translations, then $Z$ obviously has constant length. This assertion
can be partially inverted: If a Riemannian manifold $(M,g)$ has the injectivity radius,
bounded from below by some positive constant (in particularly, this
condition is satisfied for every compact or homogeneous
manifold), and $Z$ is a Killing vector field of constant
length on $(M,g)$, then the isometries $\gamma(t)$ from the 1-parameter isometry
group, generated by the vector field $Z$, are Clifford-Wolf
translations at least for sufficiently small $|t|$ \cite{BerNik2008n}.

A metric space $(M,\rho)$ is \textit{Clifford-Wolf homogeneous}
if for any points $x, y\in M$ there exists an isometry $f$, Clifford-Wolf translation, of the space $(M,\rho)$ onto itself such that $f(x)=y$.
A connected Riemannian manifold $(M,g)$ is  Clifford-Wolf homogeneous if it
is Clifford-Wolf homogeneous relative to its inner metric $\rho_{g}$.
In addition, it is \textit{$G$-Clifford-Wolf homogeneous} if one can take isometries $f$ from the Lie (sub)group $G$ of isometries of $(M,g)$
in the above definition of Clifford-Wolf homogeneity.
Clifford-Wolf homogeneous simply connected Riemannian manifold are classified in \cite{BerNik2009}:
A~simply connected Riemannian manifold is Clifford-Wolf
homogeneous if and only if it is a direct metric
product of an Euclidean space, odd-dimensional spheres of constant
curvature and simply connected compact simple Lie groups supplied
with bi-invariant Riemannian metrics.
Note that every geodesic $\gamma$ in a Clifford-Wolf
homogeneous Riemannian manifold $(M,g)$ is an
integral curve of a Killing vector field of constant length on~$(M,g)$~\cite{BerNik2009}.

In a recent paper \cite{XuWolf2014}, Ming Xu and Joseph A. Wolf obtained the classification of normal Riemannian homogeneous spaces $G/H$ with nontrivial
Killing vector fields of constant length, where $G$ is compact and simple. Every of these spaces with~$\dim(G)>\dim(H)>0$ is locally symmetric
and its universal Riemannian cover is either an odd-dimensional sphere of constant curvature, or a Riemannian symmetric space $SU(2n)/Sp(n)$.
This result is very important in the context of the study of general Riemannian homogeneous manifolds with nonzero Killing fields of constant length.
In a very recent paper \cite{WolfPodXu2015}, this result was extended to the class of pseudo-Riemannian normal homogeneous spaces.
\smallskip

In this paper we present some structural results on the Lie algebras of transitive isometry groups of a general
compact homogenous Riemannian manifold with
nontrivial Killing vector fields of constant length.
All manifolds supposed connected throughout this paper.
\smallskip

Let us consider any Lie group $G$ acting on the Riemannian manifold $(M,g)$ by isometries.
The action of $a$  on $x\in M$ will be denoted by $a(x)$.
We will identify the Lie algebra $\mathfrak{g}$ of $G$ with the corresponded Lie algebra of Killing vector field on $(M,g)$ as follows.
For any $U\in \mathfrak{g}$ we consider a one-parameter group $\exp(tU)\subset G$ of isometries of $(M,g)$ and define a Killing vector field $\widetilde{U}$
by a usual formula
\begin{equation}\label{kfaction}
\widetilde{U} (x) = \left.\frac{d}{dt}\exp(tU)(x)\right|_{t=0} .
\end{equation}
It is clear that the map $U \rightarrow \widetilde{U}$ is linear and injective, but $[\widetilde{U},\widetilde{V}]= -\widetilde{[U,V]}$.
We will use this identification repeatedly in this paper.

Let $(M,g)$ be a compact connected Riemannian manifold, $G$ is a transitive isometry group of $(M,g)$.
We identify elements of the Lie algebra $\mathfrak{g}$ of $G$ with Killing vector fields on $(M,g)$ as above.
Since $G$ is compact, then we have a decomposition

\begin{equation}\label{str1}
\mathfrak{g}= \mathfrak{c}\oplus \mathfrak{g}_1 \oplus \mathfrak{g}_2 \oplus \cdots \oplus \mathfrak{g}_k,
\end{equation}
where
$\mathfrak{c}$ is the center and $\mathfrak{g}_i$, $i=1,\dots,k$, are simple ideals in $\mathfrak{g}$.

We are going to state the main results of this paper, that are formulated under the above assumptions and notations.

\begin{theorem}\label{theorem1}
Let $Z =Z_0+Z_1+Z_2+\cdots + Z_l\in \mathfrak{g}$ be a Killing vector field of constant length on $(M,g)$, where
$1\leq l \leq k$, $Z_0\in \mathfrak{c}$, $Z_i \in \mathfrak{g}_i$ and $Z_i \neq 0$ for $1\leq i \leq l$.
Then the following statements hold:

{\rm 1)} For every $i\neq j$, $1\leq i,j \leq l$, we have $g(\mathfrak{g}_i, \mathfrak{g}_j)=0$ at every point of $M$.
In particular, $g(Z_i, \mathfrak{g}_j)=0$ and $g(Z_i,Z_j)=0$.

{\rm 2)} Every Killing field of the type $Z_0+Z_i$,  $1\leq i \leq l$, has constant length.

Conversely, if for every $i$,  $1\leq i \leq l$, the Killing field $Z_0+Z_i$ has constant length, and for every
$i\neq j$, $1\leq i,j \leq l$, the equality $g(\mathfrak{g}_i, \mathfrak{g}_j)=0$ holds, then
the Killing field $Z=Z_0+Z_1+Z_2+\cdots + Z_l$ has constant length on $(M,g)$.
\end{theorem}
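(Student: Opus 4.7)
The core object is the $x$-dependent symmetric bilinear form $B_x(X,Y):=g_x(\widetilde X,\widetilde Y)$ on $\mathfrak g$. Two facts underlie the proof. First, isometry invariance $(\phi_g)_*\widetilde X=\widetilde{\Ad(g)X}$ gives the transformation rule
\[
B_{\phi_g(x)}(X,Y)=B_x(\Ad(g^{-1})X,\Ad(g^{-1})Y),
\]
so $|Z_0|^2$ is automatically constant (since $\Ad(G)Z_0=Z_0$), and whenever $|Z|^2$ is constant, so is $|\Ad(g)Z|^2$ for every $g\in G$. Second, differentiating this rule at $g=\exp(tU)$ yields the derivation identity
\[
\widetilde U(x)\cdot B(X,Y)=-B_x([U,X],Y)-B_x(X,[U,Y]),
\]
whence $|X|^2$ is constant on $M$ iff $B_x([U,X],X)=0$ for every $U\in\mathfrak g$ and $x\in M$.

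For statement~1, fix $s\neq t$ in $\{1,\dots,l\}$. The first fact says $Z_0+Z_1+\dots+W_s+\dots+W_t+\dots+Z_l$ has constant length for every $W_s\in\Ad(G_s)Z_s$ and $W_t\in\Ad(G_t)Z_t$. Applying the derivation identity with $U\in\mathfrak g_s$ (so $[U,\cdot]$ annihilates all components outside $\mathfrak g_s$) gives
\[
B_x([U,W_s],\,Z_0+Z_1+\dots+W_s+\dots+W_t+\dots+Z_l)=0.
\]
Now vary $W_t=\Ad(\exp(rV))Z_t$ with $V\in\mathfrak g_t$: only the single summand $B_x([U,W_s],W_t)$ depends on $r$, so differentiating at $r=0$ yields $B_x([U,W_s],[V,W_t])=0$ for all $U,V,W_s,W_t,x$. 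To promote this to $B_x(\mathfrak g_s,\mathfrak g_t)=0$ I invoke the span identity $\bigcup_{W_s\in\Ad(G_s)Z_s}[\mathfrak g_s,W_s]=\mathfrak g_s$: with respect to an $\Ad$-invariant inner product, $[\mathfrak g_s,W_s]=\mathfrak z(W_s)^\perp$, and $\bigcap_{W_s}\mathfrak z(W_s)$ is the centralizer of the orbit $\Ad(G_s)Z_s$, which vanishes in the simple ideal $\mathfrak g_s$ because $Z_s\neq 0$ forces that orbit to span all of $\mathfrak g_s$. Bilinearity then delivers $B_x(\mathfrak g_s,\mathfrak g_t)=0$.

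For statement~2 set $f_i(x):=|Z_i|^2(x)+2B_x(Z_0,Z_i)$, so $|Z_0+Z_i|^2=|Z_0|^2+f_i$. Using $[U,Z_0]=0$, the derivation identity reduces $\widetilde U\cdot f_i$ to $-2B_x([U,Z_i],Z_0+Z_i)$. For $U$ central or $U\in\mathfrak g_j$ with $j\neq i$ this vanishes trivially since $[U,Z_i]=0$. For $U\in\mathfrak g_i$ the constant-length hypothesis gives $B_x([U,Z_i],Z)=0$, and statement~1 kills the cross terms $B_x([U,Z_i],Z_j)$ for $j\neq i$, leaving $B_x([U,Z_i],Z_0+Z_i)=0$. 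Transitivity of $G$ then forces $f_i$ to be constant. For the converse, the orthogonality hypothesis collapses $|Z|^2$ to $(1-l)|Z_0|^2+\sum_{i=1}^l|Z_0+Z_i|^2$, which is constant by assumption and by centrality of $Z_0$.

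The main obstacle is the span argument in statement~1 — passing from vanishing on the images of $\ad(W_s)$ and $\ad(W_t)$ to vanishing on the entire ideals $\mathfrak g_s,\mathfrak g_t$. This is the step where simplicity of $\mathfrak g_s,\mathfrak g_t$ and nontriviality of $Z_s,Z_t$ are used essentially; once it is in hand, statement~2 and the converse are bookkeeping with the derivation identity and the transformation rule for $B_x$.
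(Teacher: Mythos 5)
Your proof is correct, but for assertion 1) it takes a genuinely different route from the paper. The paper works with the single field $Z$ and the splitting $\mathfrak{g}_i=A_i\oplus B_i$, $B_i=\im L_{Z_i}$: from proposition \ref{pre1} it gets $g(B_i,Z)=0$ on $M$, then differentiates this identity along Killing directions in $B_j$ and $B_i$ (using that $L_Z$ is invertible on $B_j$) to obtain successively $g(B_i,B_j)=0$, $g(\mathfrak{g}_i,B_j)=0$ and $g(\mathfrak{g}_i,\mathfrak{g}_j)=0$; the passage from $B_i$ to all of $\mathfrak{g}_i$ rests on lemma \ref{vslem1}, which gives $B_i+[B_i,B_i]=\mathfrak{g}_i$ by simplicity. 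You instead invoke the $\Ad(G)$-invariance of the constant-length property (the paper's lemma \ref{s1}, which the paper itself only deploys later, in the proofs of theorems \ref{theorem2} and \ref{theorem3}) to generate the whole family $Z_0+\cdots+W_s+\cdots+W_t+\cdots+Z_l$ of constant-length fields with $W_s\in\Ad(G_s)Z_s$, $W_t\in\Ad(G_t)Z_t$, differentiate in the orbit parameter to get $g([U,W_s],[V,W_t])=0$ at every point, and then use that the linear span of $\bigcup_{W_s}[\mathfrak{g}_s,W_s]$ is all of $\mathfrak{g}_s$, justified correctly via $[\mathfrak{g}_s,W_s]=\mathfrak{z}(W_s)^{\perp}$ and the triviality of the centralizer of the orbit in the simple ideal $\mathfrak{g}_s$. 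Both arguments use simplicity at exactly this spot; the paper's version is more economical (one field, pointwise derivative identities, and its intermediate facts are reused in the proof of assertion 2), while yours trades the eigenspace machinery of proposition \ref{pre1} and lemma \ref{vslem1} for the adjoint-orbit trick, which is arguably more conceptual and self-contained. Your assertion 2) and the converse (the identity $|Z|^2=(1-l)|Z_0|^2+\sum_{i=1}^l|Z_0+Z_i|^2$ under the orthogonality hypothesis, and the vanishing of the derivative of $|Z_i|^2+2g(Z_0,Z_i)$ along every Killing direction plus transitivity) are essentially the paper's bookkeeping in different notation. The only cosmetic slip: differentiating at $r=0$ literally gives $g([U,W_s],[V,Z_t])=0$; to get arbitrary $W_t$ one differentiates at a general point of the orbit (equivalently, replaces $Z_t$ by $W_t$ as the base point), which is what you clearly intend and costs nothing.
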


\begin{corollary}\label{cortheo1}
If {\bf(}under the assumptions of theorem \ref{theorem1}{\bf)} we have $\mathfrak{c}=0$ and $k=l$, then every $\mathfrak{g}_i$, $1\leq i\leq k$,
is a parallel distribution on $(M,g)$. Moreover, if $(M,g)$ is simply connected, then it is a direct metric product of $k$ Riemannian manifolds.
\end{corollary}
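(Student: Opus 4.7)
The plan is to show that each subbundle $\mathcal{D}_i \subset TM$ defined by $\mathcal{D}_i(x) := \{\widetilde{U}(x) : U \in \mathfrak{g}_i\}$ is preserved by the Levi-Civita connection. Since every $\mathfrak{g}_i$ is an $\Ad(G)$-invariant ideal and $G$ acts transitively, each $\mathcal{D}_i$ is a smooth subbundle of constant rank; the pointwise sum $\mathcal{D}_1 \oplus \cdots \oplus \mathcal{D}_k$ equals $TM$ (again by transitivity), and the subbundles are mutually $g$-orthogonal by Theorem~\ref{theorem1}, part~1. Granting parallelism, the splitting assertion in the simply connected case follows from the de~Rham decomposition theorem, applicable because compactness of $M$ gives completeness.

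The structural input we need beyond Theorem~\ref{theorem1} is that distinct simple ideals in $\mathfrak{g}$ commute: $[\mathfrak{g}_i, \mathfrak{g}_j] = 0$ for $i \ne j$. Combined with the sign convention $[\widetilde{X}, \widetilde{Y}] = -\widetilde{[X,Y]}$, this means that $\widetilde{X}$ and $\widetilde{Y}$ commute as vector fields whenever $X, Y$ come from different ideals, and in particular the flow of each such Killing field preserves the other. I would then apply Koszul's formula to $2g(\nabla_{\widetilde{X}} \widetilde{Y}, \widetilde{Z})$ with $Y \in \mathfrak{g}_i$, $Z \in \mathfrak{g}_j$, $i \ne j$, and $X$ a Killing field lying in $\mathfrak{g}_i$, $\mathfrak{g}_j$, or a third ideal. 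In each case the three Lie-bracket terms vanish: either the bracket itself is zero because two of its arguments lie in different ideals, or it stays in one ideal and is paired via $g$ with a Killing field from an orthogonal ideal. The three directional-derivative terms of the form $\widetilde{A}\cdot g(\widetilde{B}, \widetilde{C})$ also vanish, for one of two reasons: either $g(\widetilde{B}, \widetilde{C}) \equiv 0$ by Theorem~\ref{theorem1}.1, or $\widetilde{A}$ commutes with both $\widetilde{B}$ and $\widetilde{C}$ (the relevant case: $A$'s ideal is distinct from the ideals containing $B$ and $C$), so $g(\widetilde{B}, \widetilde{C})$ is constant along the flow of $\widetilde{A}$.

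Having shown $g(\nabla_{\widetilde{X}} \widetilde{Y}, \widetilde{Z}) = 0$ for all Killing fields with $Y \in \mathfrak{g}_i$, $Z \in \mathfrak{g}_j$, $i \neq j$, two tensorial steps complete the argument. First, $\nabla_X \widetilde{Y}$ at a point depends only on $X$ there, so transitivity of $G$ lets me replace $\widetilde{X}$ by any tangent vector and conclude $\nabla_X \widetilde{Y} \in \mathcal{D}_i$. Second, any local section $U = \sum_\alpha f_\alpha \widetilde{Y}_\alpha$ of $\mathcal{D}_i$ with $Y_\alpha \in \mathfrak{g}_i$ yields $\nabla_X U \in \Gamma(\mathcal{D}_i)$ by the Leibniz rule, so $\mathcal{D}_i$ is parallel. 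The main obstacle I foresee is administering the Koszul case analysis crisply; but once the three cases are written out, every term becomes zero for one of the two reasons above, and the corollary reduces to invoking de~Rham.
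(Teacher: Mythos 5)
Your proof is correct and follows essentially the same route as the paper: orthogonality of the $\mathfrak{g}_i$ from Theorem~\ref{theorem1}, the vanishing of brackets between distinct simple ideals, and Killing-field identities to show each distribution is parallel, then the de~Rham decomposition for simply connected (hence complete, by compactness) $M$. The only difference is cosmetic: where you run the full Koszul formula through a three-case analysis, the paper gets the cross-terms at once from Lemma~\ref{lemma3} ($\nabla_X Y=\tfrac12[X,Y]=0$ since $g(X,Y)\equiv 0$ and $[X,Y]=0$) and then differentiates $g(X,Y)=0$ along $\mathfrak{g}_i$ to handle the remaining directions.
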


Theorem \ref{theorem1} allows to restrict our attention on Killing vector fields of constant length of the following special type:
$Z=Z_0+Z_i$, where $Z_0$ is in the center $\mathfrak{c}$ of $\mathfrak{g}$
and $Z_i$ is in the simple ideal $\mathfrak{g}_i$ in $\mathfrak{g}$. Without loss of generality we will assume that $i=1$.

\begin{theorem}\label{theorem2}
Let $Z =Z_0+Z_1\in \mathfrak{g}$ be a Killing vector fields of constant length on $(M,g)$, where
$Z_0\in \mathfrak{c}$, $Z_1 \in \mathfrak{g}_1$ and $Z_1 \neq 0$, and let $\mathfrak{k}$ be the centralizer of $Z$ {\rm(}and $Z_1${\rm)} in $\mathfrak{g}_1$.
Then either any $X\in \mathfrak{g}_1$ is a Killing field of constant length on $(M,g)$, or
the pair $(\mathfrak{g}_1, \mathfrak{k})$
is one of the following irreducible Hermitian symmetric pair:

{\rm 1)} $(su(p+q), su(p)\oplus su(q)\oplus \mathbb{R})$, $p\geq q \geq 1$;

{\rm 2)} $(so(2n),su(n)\oplus \mathbb{R})$, $n\geq 5$;

{\rm 3)} $(so(p+2), so(p)\oplus \mathbb{R})$, $p \geq 5$;

{\rm 4)} $(sp(n),su(n)\oplus \mathbb{R})$, $n\geq 2$;

\noindent In the latter four cases the center of $\mathfrak{k}$ is a one-dimensional Lie algebra spanned by the vector~$Z_1$.
\end{theorem}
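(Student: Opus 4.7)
The approach is to translate the constant-length hypothesis into an invariance condition for a quadratic form on $\mathfrak g_1$, and then to extract algebraic constraints that identify the Hermitian structure of $(\mathfrak g_1,\mathfrak k)$ whenever the alternative ``every $X\in\mathfrak g_1$ is of constant length'' fails.

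Fix $p\in M$ and set $Q(U):=g_p(\widetilde U(p),\widetilde U(p))$. The identity $\widetilde{\Ad(a)U}(a(p))=a_*\widetilde U(p)$, combined with the fact that each $a\in G$ acts on $(M,g)$ by isometry, yields that a Killing field $U$ has constant length on $(M,g)$ if and only if $Q$ is constant on the $\Ad(G)$-orbit of $U$. Because $Z_0\in\mathfrak c$ is $\Ad$-fixed, the full orbit $\{Z_0+Y:Y\in\mathcal O\}$ with $\mathcal O:=\Ad(G_1)Z_1$ consists of constant-length Killing fields with the same constant $c$. Moreover, every $X\in\mathfrak g_1$ has constant length iff $Q|_{\mathfrak g_1}$ is $\Ad(G_1)$-invariant, which, since $\mathfrak g_1$ is simple, means $Q|_{\mathfrak g_1}=\lambda\kappa$ for the negative Killing form $\kappa$ of $\mathfrak g_1$. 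Writing $Q|_{\mathfrak g_1}=\lambda\kappa+R$ with $R$ in the $\Ad(G_1)$-isotypic components of $S^2\mathfrak g_1$ with no trivial summand, the task reduces to: if $R\neq 0$, then $\mathfrak z(\mathfrak k)=\mathbb R Z_1$ and $(\mathfrak g_1,\mathfrak k)$ is one of the four listed pairs.

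From $Q(Z_0+Y)=c$ on $\mathcal O$ and the invariance of $\kappa$, the identity
$$R(Y,Y)+2L(Y)=\mathrm{const}\quad\text{on }\mathcal O,\qquad L(Y):=g_p\bigl(\widetilde{Z_0}(p),\widetilde Y(p)\bigr),$$
follows after absorbing the $\kappa$-contribution (averaging over $\Ad(G_1)$ even shows the constant equals $0$). Differentiating along $\Ad(\exp sY')Z_1$ at $s=0$ yields the first-order condition
$$R(Z_1,W)+L(W)=0\qquad\text{for every }W\in[\mathfrak g_1,Z_1]=\mathfrak k^\perp,$$
together with an analogous second-order identity expressing $R(W,W)$ in terms of the $\mathfrak k$-component of $\ad(Y')^2 Z_1$. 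By $\Ad(G_1)$-equivariance of $R$, these identities hold at every point of $\mathcal O$ and give a highly overdetermined system. Suppose $\dim\mathfrak z(\mathfrak k)\geq 2$; then choosing $Z_1'\in\mathfrak z(\mathfrak k)$ independent of $Z_1$, the family $Z_1+tZ_1'$ lies in $\mathfrak z(\mathfrak k)$ and has centralizer $\mathfrak k$ for generic $t$, so one can move the point of linearization inside the torus $\exp(\mathfrak z(\mathfrak k))$; propagating the rigidity obtained at $Z_1$ forces $R\equiv 0$, contradicting $R\neq 0$. Hence $\mathfrak z(\mathfrak k)=\mathbb R Z_1$, and by the classical description of centralizers in a simple compact Lie algebra (a single node with coefficient $1$ in the highest root removed from the Dynkin diagram) $(\mathfrak g_1,\mathfrak k)$ is an irreducible Hermitian symmetric pair of compact type.

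It remains to exclude the exceptional Hermitian pairs $(e_6, so(10)\oplus\mathbb R)$ and $(e_7, e_6\oplus\mathbb R)$. I expect this to rest on a direct representation-theoretic analysis specific to $E_6$ and $E_7$: in each exceptional case the $\Ad(G_1)$-equivariant symmetric forms $R$ whose polar $R(Z_1,\cdot)+L$ annihilates $\mathfrak k^\perp$ reduce to zero, so $R$ must vanish and we land in the first alternative. The main obstacle in the whole argument is exactly this case-by-case elimination of the exceptional Hermitian pairs, together with the rigidity step needed to conclude $\dim\mathfrak z(\mathfrak k)=1$ when $R\neq 0$.
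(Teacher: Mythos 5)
Your reduction of the problem to the quadratic form $Q(U)=g_p(\widetilde U(p),\widetilde U(p))$ and its behaviour on $\Ad(G)$-orbits is sound and is essentially the paper's lemma \ref{s1} together with proposition \ref{pre1}; the decomposition $Q|_{\mathfrak g_1}=\lambda\kappa+R$ and the observation that ``every $X\in\mathfrak g_1$ has constant length'' is equivalent to $R=0$ are also fine. But from that point on the proposal does not actually prove anything: the two steps that constitute the theorem are asserted rather than argued. First, the claim that $\dim\mathfrak z(\mathfrak k)\geq 2$ forces $R\equiv 0$ is supported only by the phrase ``propagating the rigidity obtained at $Z_1$ forces $R\equiv 0$''; no mechanism of propagation is given, and this is precisely the hard part. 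In the paper this is the content of proposition \ref{newpr4} (together with \ref{newpr5}): one decomposes $\mathfrak m=\im L_Z$ into eigenspaces of $L_Z^2$ and then into $\ad(\mathfrak m_0)$-irreducible pieces $\mathfrak p_i$, introduces $\sigma$ and the elements $h(X)=[X,\sigma(X)]$, and shows that unless all $[\mathfrak p_i,\mathfrak p_j]$ vanish for $i\neq j$ one gets $g(\mathfrak k,\mathfrak m)=0$ on $M$, after which a Weyl-group argument (irreducibility of the $W$-action on $\mathfrak t$ plus lemma \ref{s1}) yields constant length for all of $\mathfrak g_1$. None of this is replaced by anything in your text.

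Second, your inference ``$\dim\mathfrak z(\mathfrak k)=1$, hence by the classical description of centralizers $(\mathfrak g_1,\mathfrak k)$ is an irreducible Hermitian symmetric pair'' is false as stated. A centralizer with one-dimensional center corresponds to deleting a single node of the Dynkin diagram, but the resulting pair is symmetric only when that node carries coefficient $1$ in the highest root; that coefficient condition is exactly what has to be established and does not follow from $\dim\mathfrak z(\mathfrak k)=1$. For instance, in $\mathfrak g_1=sp(2)$ the centralizer of $Z_1=e_1$ is $su(2)\oplus\mathbb R$, yet $\mathfrak m$ contains root spaces with $\langle Z_1,\alpha\rangle$ equal to both $1$ and $2$, so $[\mathfrak m,\mathfrak m]\not\subset\mathfrak k$ and the pair is not symmetric. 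The paper obtains the Hermitian symmetric conclusion not from the dimension of the center but from the structural dichotomy in proposition \ref{newpr4} (single eigenvalue of $L_Z^2$ on $\mathfrak m$ versus $g(\mathfrak k,\mathfrak m)=0$). Finally, the exclusion of $(e_6,so(10)\oplus\mathbb R)$ and $(e_7,e_6\oplus\mathbb R)$ is left as ``I expect this to rest on a representation-theoretic analysis''; the paper does it concretely (proposition \ref{newpr6}) by explicit root computations which produce a root $\gamma$ with $g(\mathfrak t,\mathfrak v_\gamma)=0$, and then uses that all roots of $e_6$, $e_7$ have equal length so Weyl transitivity (proposition \ref{newprort}) gives $g(\mathfrak t,\mathfrak v)=0$ and hence constant length of every $X\in\mathfrak g_1$. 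As it stands, the proposal is a plausible reformulation with the three essential arguments missing or incorrect.
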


If a Killing field of constant length $Z=Z_0+Z_1$ satisfies one of the cases 1)-4) in theorem \ref{theorem2}, we will say that it has {\it Hermitian type}.
Recall that an element $U\in \mathfrak{g}$ is {\it regular} in $\mathfrak{g}$, if
its centralizer has minimal dimension among all the elements of $\mathfrak{g}$.
For~$Z$ of Hermitian type, $Z_1$ is not
a regular element in $\mathfrak{g}_1$, since $\mathfrak{k}$ is not commutative in the cases 1)-4) of theorem \ref{theorem2}.
Hence, we get

\begin{corollary}\label{cortheo2}
If $Z_1$ is a regular element in the Lie algebra $\mathfrak{g}_1$ {\rm(}under the assumptions of theorem {\ref{theorem2} \rm)},
then every $X\in \mathfrak{g}_1$ is a Killing vector field of constant length on $(M,g)$.
\end{corollary}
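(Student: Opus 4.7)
The plan is to derive this corollary directly from Theorem~\ref{theorem2}: that theorem offers a dichotomy, and the regularity of $Z_1$ rules out the four Hermitian possibilities, so only the first alternative --- which is precisely the conclusion sought --- can hold.

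The central observation is the classical structural fact that in a compact semisimple Lie algebra the centralizer of a regular element is a Cartan subalgebra, and is therefore abelian of dimension $\rk(\mathfrak{g}_1)$. Since $\mathfrak{k}$ is by definition the centralizer of $Z_1$ in the compact simple ideal $\mathfrak{g}_1$, the hypothesis that $Z_1$ is regular forces $\mathfrak{k}$ to be abelian.

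It then remains to observe that the subalgebras $\mathfrak{k}$ appearing in cases 1)--4) of Theorem~\ref{theorem2} --- namely $su(p)\oplus su(q)\oplus\mathbb{R}$, $su(n)\oplus\mathbb{R}$, $so(p)\oplus\mathbb{R}$ and $su(n)\oplus\mathbb{R}$, in the respective ranges $p\geq q\geq 1$, $n\geq 5$, $p\geq 5$, $n\geq 2$ --- each contain a nontrivial simple summand and are therefore non-abelian (the only potential boundary case $p=q=1$ in 1), which would give $\mathfrak{g}_1=su(2)$ of rank one, is itself a degenerate situation in which every element of $su(2)$ is automatically a Killing field of constant length by direct inspection, so the conclusion holds in that case too). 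This contradicts the abelianness derived in the previous paragraph, so the Hermitian alternative cannot occur, and the first alternative of Theorem~\ref{theorem2} yields the conclusion: every $X\in\mathfrak{g}_1$ is a Killing vector field of constant length on $(M,g)$.

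There is essentially no obstacle here: the argument is just the contrapositive of the remark stated immediately before the corollary, which already records that for $Z$ of Hermitian type $\mathfrak{k}$ is non-commutative and hence $Z_1$ fails to be regular. All the mathematical content is packaged into Theorem~\ref{theorem2}; the corollary is a routine structural observation on top of it.
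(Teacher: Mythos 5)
Your main line is exactly the paper's: the corollary is obtained there as the contrapositive of the remark immediately preceding it --- the centralizer of a regular element of the compact simple algebra $\mathfrak{g}_1$ is a Cartan subalgebra, hence abelian, while for $Z$ of Hermitian type $\mathfrak{k}$ is claimed to be non-commutative, so regularity of $Z_1$ excludes the second alternative of theorem \ref{theorem2} and the first alternative is the desired conclusion. So in substance you reproduce the paper's proof.

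The one point where you go beyond the paper is the boundary case $p=q=1$ of case 1), where $\mathfrak{g}_1\cong su(2)$ and $\mathfrak{k}=\mathbb{R}Z_1$ is abelian, so the non-commutativity argument (and, strictly speaking, the paper's own one-line justification) does not apply; it is to your credit that you noticed this. But your way of closing it --- that for $\mathfrak{g}_1\cong su(2)$ every element of $\mathfrak{g}_1$ is ``automatically'' of constant length ``by direct inspection'' --- is not a proof and is not automatic: without the hypothesis that some $Z=Z_0+Z_1$ with $Z_1\neq 0$ has constant length the claim is simply false (the rotation fields of $so(3)$ on $S^2$ vanish at the poles). The correct short argument, using only tools already in the paper, runs as follows. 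The Weyl group of $su(2)$ contains $-\Id$, so some inner automorphism sends $Z_1$ to $-Z_1$; hence $Z_0-Z_1$ also has constant length by lemma \ref{s1}, and combining $g(Z_0+Z_1,\mathfrak{m})=0$ with $g(Z_0-Z_1,\mathfrak{m})=0$ (corollary \ref{orthog}, with $\mathfrak{m}=\im L_{Z_1}$) gives $g(Z_1,\mathfrak{m})=0$, so $Z_1$ itself has constant length by proposition \ref{pre1}; this is precisely the argument of proposition \ref{newpr6.5}. Finally, since $\mathfrak{g}_1\cong su(2)$ has rank one, every $X\in\mathfrak{g}_1$ is a scalar multiple of an inner-automorphism image of $Z_1$, so lemma \ref{s1} shows that every $X\in\mathfrak{g}_1$ has constant length. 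With this insertion your proof is complete, and it in fact patches a small inaccuracy in the paper, whose remark asserts non-commutativity of $\mathfrak{k}$ in all of cases 1)--4) even though $(su(p+q),su(p)\oplus su(q)\oplus\mathbb{R})$ with $p=q=1$ has abelian $\mathfrak{k}$.
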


Moreover, the following result holds.

\begin{theorem}\label{theorem3}
If $Z=Z_0+Z_1+\cdots +Z_k$ is a regular element of $\mathfrak{g}$ and
has constant length on $(M,g)$, then the following assertions hold:

{\rm 1)} $g(\mathfrak{g}_i,\mathfrak{g}_j)=0$ for every $i\neq j$, $i,j=1,\dots,k$, at every point of $M$;

{\rm 2)} $g(Z_0,\mathfrak{g}_i)=0$ for every $i=1,\dots,k$, at every point of $M$;

{\rm 3)} every  $X\in \mathfrak{g}_{\mathfrak{s}}$,
where $\mathfrak{g}_{\mathfrak{s}}=\mathfrak{g}_1\oplus\cdots\oplus\mathfrak{g}_k$
is a semisimple part of $\mathfrak{g}$, has constant length on $(M,g)$.

Moreover, if $M$ is simply connected, then $Z_0=0$ and $(M,g)$ is a direct metric product of $(G_i,\mu_i)$,
$1\leq i \leq k$, where $G_i$ is a connected and simply connected compact simple Lie group
with the Lie algebra $\mathfrak{g}_i$ and $\mu_i$ is a bi-invariant Riemannian metric on $G_i$.
\end{theorem}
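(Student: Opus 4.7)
The plan has three main parts. Statements (1) and (3) follow by combining regularity of $Z$ with Theorem~\ref{theorem1} and Corollary~\ref{cortheo2}; statement (2) reduces to a short representation-theoretic argument via the $G$-equivariance of \eqref{kfaction}; and the ``moreover'' clause will come from a Koszul computation together with the de~Rham decomposition theorem.

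\smallskip

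First I would argue that regularity of $Z$ in $\mathfrak{g}$ forces each $Z_i$ ($1\le i\le k$) to be nonzero and regular in $\mathfrak{g}_i$: the centralizer of $Z$ decomposes as $\mathfrak{c}\oplus\bigoplus_i\mathfrak{z}_{\mathfrak{g}_i}(Z_i)$ because the summands of \eqref{str1} pairwise commute, and the only way to minimize its dimension over $\mathfrak{g}$ is to take each $Z_i$ regular in $\mathfrak{g}_i$ (in particular, nonzero, since the centralizer of $0$ would be all of $\mathfrak{g}_i$). This places us in the setting of Theorem~\ref{theorem1} with $l=k$, which directly yields~(1) and shows that every $Z_0+Z_i$ has constant length; Corollary~\ref{cortheo2} applied to each such sum then gives~(3).

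\smallskip

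For~(2), I would first note that $\|\widetilde{Z_0}\|$ is automatically constant because $\widetilde{Z_0}$ is $G$-invariant; combining this with~(3) and the constancy of $\|Z_0+Z_i\|$, polarization shows that $g(\widetilde{Z_0},\widetilde{Z_i})$ is a constant function on $M$. The identification \eqref{kfaction} satisfies the equivariance
\[
g(\widetilde{Z_0},\widetilde{X})(a\cdot p)=g\bigl(\widetilde{Z_0},\widetilde{\Ad(a^{-1})X}\bigr)(p)\qquad(a\in G,\ X\in\mathfrak{g}_i),
\]
so constancy on $M$ propagates from $Z_i$ to its entire $\Ad(G)$-orbit. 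Since $\mathfrak{g}_i$ is an irreducible $\Ad(G_i)$-module and $Z_i\neq 0$, this orbit spans $\mathfrak{g}_i$ linearly, so $g(\widetilde{Z_0},\widetilde{X})$ is a constant $c(X)$ on $M$ for every $X\in\mathfrak{g}_i$. The same equivariance then forces $c$ to be an $\Ad(G_i)$-invariant linear form on a compact simple Lie algebra, hence $c\equiv 0$, which is exactly~(2).

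\smallskip

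For the ``moreover'' clause I would use that any nonzero Killing field of constant length never vanishes on $M$, so by~(3) each $\mathfrak{g}_i$ injects into $T_pM$ at every $p$ and yields an orthogonal decomposition $TM=D_c\oplus D_1\oplus\cdots\oplus D_k$ of constant ranks. A Koszul computation, using that all relevant inner products are constant (by~(1), (2) and polarization of~(3)) and that all brackets between different ideals vanish, shows that each $D_i$ and $D_c$ is a parallel distribution. The de~Rham theorem then splits the simply connected compact manifold $M$ as a direct metric product $M_c\times M_1\times\cdots\times M_k$. The abelian factor $M_c$ is flat (Koszul inside $\mathfrak{c}$ makes all Christoffel symbols vanish), hence a point, so by the injectivity of $U\mapsto\widetilde{U}$ we conclude $\mathfrak{c}=0$ and $Z_0=0$. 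Each $M_i$ is simply connected and admits a free transitive action of the simply connected compact simple group $G_i$, so $M_i\cong G_i$; the constant length of every $\widetilde{X}$, $X\in\mathfrak{g}_i$, is equivalent to $\Ad(G_i)$-invariance of the metric at the identity, i.e.\ to bi-invariance of $\mu_i$. The main obstacle I anticipate is the passage in step~(2) from constancy of $g(\widetilde{Z_0},\widetilde{X})$ to its vanishing: it uses the equivariance of \eqref{kfaction} together with the nonexistence of nonzero invariant linear forms on a simple $\Ad(G_i)$-module; the rest of the argument is standard once the orthogonality and constancy relations are in place.
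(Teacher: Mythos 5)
Your handling of statements (1) and (3) follows the paper's own route (regularity of $Z$ forces every $Z_i\neq 0$ and regular in $\mathfrak{g}_i$, then theorem \ref{theorem1} and corollary \ref{cortheo2}; note that to pass from ``every $X\in\mathfrak{g}_i$ has constant length'' to all of $\mathfrak{g}_{\mathfrak{s}}$ you must also invoke the orthogonality (1), which is immediate). Your argument for (2) is correct but genuinely different from the paper's: you use polarization of $\|Z_0+Z_i\|$, the $\Ad$-equivariance underlying lemma \ref{s1}, irreducibility of the adjoint module, and the vanishing of invariant linear forms on a simple Lie algebra, whereas the paper gets $g(Z_0,\mathfrak{m})=0$ from proposition \ref{pre1} applied to $Z$ and to $Z_{\mathfrak{s}}=Z-Z_0$ and then uses $\mathfrak{m}+[\mathfrak{m},\mathfrak{m}]=\mathfrak{g}_{\mathfrak{s}}$ via lemma \ref{vslem1}. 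Both are fine; yours is a pleasant representation-theoretic shortcut.

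The ``moreover'' clause, however, contains a genuine gap. The orthogonal splitting $TM=D_c\oplus D_1\oplus\cdots\oplus D_k$ you start from is not available: statement (2) only says that the single field $\widetilde{Z_0}$ is pointwise orthogonal to the $\widetilde{\mathfrak{g}_i}$, not that all of $\widetilde{\mathfrak{c}}$ is, and nothing prevents the central directions from lying inside $D_1\oplus\cdots\oplus D_k$. Indeed your intended conclusion $\mathfrak{c}=0$ is false in general: take $M=SU(2)$ with a bi-invariant metric and let $G$ be generated by all left translations together with one circle of right translations, so that $\mathfrak{g}=\mathbb{R}\oplus su(2)$ with $\mathfrak{c}=\mathbb{R}$; any $Z=Z_1\neq 0$ (so $Z_0=0$) is regular and of constant length, the hypotheses of theorem \ref{theorem3} hold, yet $\mathfrak{c}\neq 0$ and the nonvanishing central field is at no point orthogonal to $\widetilde{\mathfrak{g}_1}$, so no such $D_c$-factor can split off. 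The theorem claims only $Z_0=0$, and the missing ingredient is the fact used in the paper (proposition 9 on p.~94 of \cite{On1994}): for simply connected $M$ the semisimple part of a transitive isometry group is itself transitive. Then $\widetilde{\mathfrak{g}_{\mathfrak{s}}}$ spans $T_xM$ at every point, so (2) forces $\widetilde{Z_0}\equiv 0$, i.e. $Z_0=0$, and the metric splitting is obtained from the distributions $D_1,\dots,D_k$ alone via corollary \ref{cortheo1}; after that your closing steps (discrete isotropy, simple connectivity giving $M_i\cong G_i$, and lemma \ref{biinl} giving bi-invariance of $\mu_i$) agree with the paper.
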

\medskip

The structure of the paper is the following.  We consider various examples of Killing vector fields of constant length  in section 1.
Sections 2, 3, and 4 are devoted to the presentation of auxiliary results on Killing vector fields on Riemannian manifolds,
on some algebraic properties of special homogeneous spaces, and on the root spaces decompositions of simple Lie algebras.
In section 5 we prove theorems 1 and 3, corollary 1 and start to prove theorem 2.
We finish the proof of theorem 2 in section~6. Section 7 devoted to a more detailed study of cases 1)-4) in theorem \ref{theorem2}.
Finally, we discuss some unsolved problems and questions in section 8.
\smallskip

{\bf Acknowlegment.}
The author extends gratitude to Professor V.N. Berestovskii for interesting and helpful discussions concerning this project and
to the anonymous referee
for helpful comments and suggestions that improved the presentation of this paper.

\section{Examples of Killing vector fields of constant length}\label{examsec}

In this section we discuss some examples of Killing vector fields of constant length on Riemannian manifolds.
At first we consider examples for theorem \ref{theorem2} with simple~$\mathfrak{g}_1$ consisting of Killing fields of constant length.

It is well known that the group of left translations, as well as the group of right translation, of a compact Lie group $G$,
supplied with a bi-invariant Riemannian metric~$\mu$, consists of Clifford-Wolf translations. Therefore, Killing fields that generate
these groups have constant length on $(G,\mu)$. Of course, every direct metric product of $(G,\mu)$ with any Riemannian manifolds
also has Killing fields of constant length. Now, let us consider more interesting examples.
\smallskip

Let $F$ be a connected compact simple Lie group, $k \in \mathbb{N}$.  Consider a so-called Ledger~--~Obata space $F^k/\diag(F)$, see
\cite[section 4]{LedgerObata} or \cite{Nikonorov3}. We supply it with any invariant Riemannian metric $g$.
The structure of invariant Riemannian metrics on~$F^k/\diag(F)$ is discussed in details in \cite{Nikonorov3}.
It should be noted that for any compact Lie group $F$, a  Ledger~--~Obata space
$F^k/\diag(F)$ is diffeomorphic to the Lie group $F^{k-1}$ \cite[P.~453]{LedgerObata}. It is easy to see that
every copy $F$ in $F^k$ consists of Clifford-Wolf translations on~$(F^k/\diag(F),g)$ as well as
every copy of the Lie algebra $\mathfrak{f}$ in $\mathfrak{f}\oplus \mathfrak{f} \oplus \cdots \oplus \mathfrak{f}=k\cdot \mathfrak{f}$
consists of Killing fields of constant length for any invariant Riemannian metric $g$.
For example, we may consider $g$ induced with the Killing form of $k\cdot \mathfrak{f}$. For such a choice
$(F^k/\diag(F),g)$ is (locally) indecomposable.
\smallskip

The above example could be generalized as follows. Consider a compact simple Lie group $F$ and compact Lie groups $G_1,G_2,\dots,G_l$ such that
every $G_i$ has a subgroup isomorphic to $F\times K_i$.
Then the group $F\times G_1\times \cdots \times G_l$ acts naturally on $M=G_1/K_1\times \cdots \times G_l/K_l$:
$$
(a, b_1,b_2,\dots,b_l)\cdot (c_1K_1,c_2K_2,\dots, c_lK_l)\rightarrow (b_1c_1K_1a^{-1},b_2c_2K_2a^{-1},\dots, b_lc_lK_la^{-1}).
$$
Hence, $M=(F\times G_1\times \cdots \times G_l)/(K_1\times \cdots \times K_l\times\diag(F))$. It is clear that for any
$(F\times G_1\times \cdots \times G_l)$-invariant Riemannian metric $g$, $F$ consists of Clifford-Wolf translation on $(M,g)$ and every
$Z\in \mathfrak{f}$ has constant length on $(M,g)$, because the group $G_1\times \cdots \times G_l$ is transitive on $(M,g)$.
\medskip

In what follows, we will consider examples for the cases 1)-4) in theorem \ref{theorem2}.
\smallskip

We say that a Lie algebra $\mathfrak{g}$ of Killing vector fields is {\it transitive} on a Riemannian manifold $(M,g)$
if $\mathfrak{g}$ generates the tangent space to $M$ at every point $x\in M$, or, equivalently,
the connected isometry group $G$ with the Lie algebra $\mathfrak{g}$ acts transitively on $(M,g)$.

The following simple observation gives many examples of Killing vector fields of constant length on homogeneous Riemannian manifolds.
Suppose that a Lie algebra $\mathfrak{g}$ of Killing vector fields is transitive on a Riemannian manifold $(M,g)$ and
the Killing field $Z$ on $(M,g)$ commutes with $\mathfrak{g}$ (in particular, $Z$ is in the center of $\mathfrak{g}$),
then $Z$ has a constant length on $(M,g)$. Indeed,
for any $X\in \mathfrak{g}$ we have $X\cdot g(Z,Z)=2g([X,Z],Z)=0$. Since $\mathfrak{g}$ is transitive on $M$, we get
$g(Z,Z)=\const$.

\begin{example}\label{exam1}
Consider the irreducible symmetric space $M=SU(2n)/Sp(n)$, $n \geq 2$. It is known
that the subgroup $SU(2n-1)\cdot S^1 \subset SU(2n)$ acts transitively on $M$, see e.g. \cite{On1994} or \cite{WolfBook}. Therefore,
the Killing vector
$Z$ generated by $S^1$, is a Killing field of constant length on $M=SU(2n)/Sp(n)$.
The centralizer $\mathfrak{k}$ of $Z$ in $\mathfrak{g}_1$ is obviously $su(2n-1)\oplus \mathbb{R}$, i.e.
$(\mathfrak{g}_1,\mathfrak{k})=(su(2n), su(2n-1)\oplus \mathbb{R})$,  see case {\rm 1)} in theorem \ref{theorem2}.
\end{example}

\begin{example}\label{exam2}
Consider the sphere $S^{2n-1}$, $n\geq 2$, as the symmetric space $S^{2n-1}=SO(2n)/SO(2n-1)$. It is known
that the subgroup $U(n)=SU(n)\cdot S^1 \subset SO(2n)$ acts transitively on $S^{2n-1}$. Therefore, the Killing vector
$Z$ generating $S^1$, is a Killing field of constant length on $S^{2n-1}$.
The centralizer $\mathfrak{k}$ of $Z$ in $\mathfrak{g}_1$ is $u(n)=su(n)\oplus \mathbb{R}$, i.e.
$(\mathfrak{g}_1,\mathfrak{k})=(so(2n), su(n)\oplus \mathbb{R})$, see case {\rm 2)} in theorem \ref{theorem2}.
\end{example}

\begin{example}\label{exam3}
Consider the sphere $S^{4n-1}$, $n\geq 2$, as the symmetric space $S^{4n-1}=SO(4n)/SO(4n-1)$. It is known
that the subgroup $Sp(n)\cdot S^1 \subset SO(4n)$ acts transitively on $S^{4n-1}$. Therefore, the Killing vector
$Z$ generated by $S^1$, is a Killing field of constant length on $S^{4n-1}$.
On the other hand, the centralizer $\mathfrak{k}$ of $Z$ in $\mathfrak{g}_1$ is
$su(2n)\oplus \mathbb{R}$, since $sp(n)\oplus \mathbb{R} \subset su(2n)\oplus \mathbb{R}$ {\rm(}see details e.g. in \cite{BerNik2014} {\rm)}.
Therefore, $(\mathfrak{g}_1,\mathfrak{k})=(so(4n), su(2n)\oplus \mathbb{R})$, see case {\rm 2)} in theorem \ref{theorem2}.
\end{example}

Note that the full connected isometry group of the sphere $S^{n-1}$ with the canonical Riemannian metric $g_{\can}$ of constant curvature $1$,
is $SO(n)$, but there are some subgroups $G$ of $SO(n)$ with transitive action on $S^{n-1}$.
It is interesting that $S^{n-1}$ is $G$-Clifford-Wolf homogeneous for some of them:
$S^{2n-1}=SO(2n)/SO(2n-1)=U(n)/U(n-1)$ is $SO(2n)$-Clifford-Wolf homogeneous and $U(n)$-Clifford-Wolf homogeneous;
$S^{4n-1}=Sp(n)/Sp(n-1)=Sp(n)\cdot S^1/Sp(n-1)\cdot S^1=SU(2n)/SU(2n-1)$ is $Sp(n)$-Clifford-Wolf homogeneous, $Sp(n)\cdot S^1$-Clifford-Wolf
homogeneous, and $SU(2n)$-Clifford-Wolf homogeneous;
$S^7=Spin(7)/G_2$ is $Spin(7)$-Clifford-Wolf homogeneous; $S^{15}=Spin(9)/Spin(7)$ is $Spin(9)$-Clifford-Wolf homogeneous
(see details in \cite{BerNik2014}).
Every of these results gives an example of Killing vector field of constant length in the Lie algebra $\mathfrak{g}$ corresponded to the group $G$.

\begin{example}\label{exam4}
There exists a Killing vector field of constant length $Z$ on $(S^{4p-1},g_{\can})$ such that
$Z\in su(2p)$ and
the centralizer $\mathfrak{k}$ of $Z$ in $su(2p)$ is $su(p)\oplus su(p) \oplus \mathbb{R}$, i.e.
$(\mathfrak{g}_1,\mathfrak{k})=(su(2p), su(p)\oplus su(p) \oplus \mathbb{R})$, see case {\rm 1)} in theorem \ref{theorem2} and proposition 12 in~\cite{BerNik2014}.
Note, that we may take $Z=\diag(\underbrace{{\bf i},  \dots, {\bf i}}_{p}\,, \underbrace{{\bf -i},  \dots, {\bf -i}}_{p}\,)\in su(2p)$.
\end{example}

\begin{example}\label{exam5}
There exists a Killing vector field of constant length $Z$ on $(S^{2(p+q)-1},g_{\can})$ such that
$Z\in u(p+q)$ and
the centralizer $\mathfrak{k}$ of $Z$ in $u(p+q)$ is $u(p)\oplus u(q)$, i.e.
$(\mathfrak{g}_1,\mathfrak{k})=(su(p+q), su(p)\oplus su(q) \oplus \mathbb{R})$, see case {\rm 1)} in theorem \ref{theorem2} and proposition 12 in~\cite{BerNik2014}.
Note, that we may take $Z=\diag(\underbrace{{\bf i},  \dots, {\bf i}}_{p}\,, \underbrace{{\bf -i},  \dots, {\bf -i}}_{q}\,)\in u(p+q)$.
Note also that the Lie algebra $u(p+q)$ has a 1-dimensional center, and $Z_0=0$ if and only if $p=q$, when we get the previous example.
If $p\neq q$, then the Killing field $Z_1$ does not have a constant length on $(S^{2(p+q)-1},g_{\can})$ by proposition \ref{newpr7},
see also proposition \ref{supred}.
\end{example}

\begin{example}\label{exam6}
There exists a Killing vector field of constant length $Z$ on $(S^7,g_{\can})$ such that
$Z\in spin(7)\simeq so(7)$ and
the centralizer $\mathfrak{k}$ of $Z$ in $spin(7)\simeq so(7)$ is $so(5)\oplus \mathbb{R}$, i.e.
$(\mathfrak{g}_1,\mathfrak{k})=(so(7), so(5)\oplus \mathbb{R})$, see case {\rm 3)} in theorem \ref{theorem2} and remark 6 in \cite{BerNik2014}.
\end{example}

\begin{example}\label{exam7}
There exists a Killing vector field of constant length $Z$ on $(S^{15},g_{\can})$ such that
$Z\in spin(9)$ and
the centralizer $\mathfrak{k}$ of $Z$ in $spin(9)\simeq so(9)$ is $so(7)\oplus \mathbb{R}$, i.e.
$(\mathfrak{g}_1,\mathfrak{k})=(so(9), so(7)\oplus \mathbb{R})$, see case {\rm 3)} in theorem \ref{theorem2} and proposition 20 in \cite{BerNik2014}.
\end{example}

\begin{example}\label{exam8}
There exists a Killing vector field of constant length $Z$ on $(S^{4n-1},g_{\can})$ such that
$Z\in sp(n)$ and
the centralizer $\mathfrak{k}$ of $Z$ in $sp(n)$ is $u(n)$, i.e.
$(\mathfrak{g}_1,\mathfrak{k})=(sp(n), su(n)\oplus \mathbb{R})$, see case {\rm 4)} in theorem \ref{theorem2} and proposition 12 in \cite{BerNik2014}.
Note, that we may take $Z=\diag({\bf i}, {\bf i},\dots,{\bf i})\in sp(n)$.
\end{example}

\section{Preliminaries on Killing vector fields of constant length }\label{gen0}

Let $(M,g)$ be a connected Riemannian manifolds, $G$ be a connected Lie group acting isometrically on $(M,g)$ ($x\mapsto a(x)$, $x\in M$, $a \in G$).
Let $\mathfrak{g}$ be the Lie algebra of the group $G$, all elements of $\mathfrak{g}$ we identify with Killing fields on $(M,g)$ via (\ref{kfaction}).
We recall some important properties of Killing vector fields, in particular, Killing fields of constant length on $(M,g)$.
We also prove some useful results on Killing vector fields of constant length.

\begin{lemma}[Lemma 3 in \cite{BerNik2009}]\label{lemma1}
Let $X$ be a Killing vector field on a Riemannian manifold
$(M,g)$. Then the following conditions are equivalent:

{\rm 1)} $X$ has constant length on $M$;

{\rm 2)} $\nabla_XX=0$ on $M$;

{\rm 3)} every integral curve of the field $X$ is a geodesic in $(M,g)$.
\end{lemma}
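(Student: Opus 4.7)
The plan is to prove the chain $(1) \Leftrightarrow (2) \Leftrightarrow (3)$ using the Killing equation as the central tool. The key identity I would derive first is that, for any Killing field $X$ and any vector field $Y$ on $(M,g)$,
\[
Y \cdot g(X,X) \;=\; -\,2\, g(Y,\nabla_X X).
\]
This follows by combining two facts: the general identity $Y\cdot g(X,X) = 2\,g(\nabla_Y X, X)$ (which is just metric compatibility) with the Killing skew-symmetry $g(\nabla_Y X, X) = -g(Y, \nabla_X X)$, obtained by setting $Z=X$ in the standard consequence $g(\nabla_Y X, Z) + g(\nabla_Z X, Y) = 0$ of $L_X g = 0$.

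Once this identity is established, the equivalence $(1) \Leftrightarrow (2)$ is immediate. Condition (1) says $Y\cdot g(X,X) = 0$ for every vector field $Y$ on $M$; by the displayed identity this is equivalent to $g(Y, \nabla_X X) = 0$ for every $Y$, which in turn is equivalent to $\nabla_X X = 0$ by non-degeneracy of $g$, i.e.\ condition (2).

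The equivalence $(2) \Leftrightarrow (3)$ is essentially a translation of definitions. If $\gamma\colon I \to M$ is an integral curve of $X$, then $\dot\gamma(t) = X(\gamma(t))$, so $\nabla_{\dot\gamma}\dot\gamma = (\nabla_X X)\circ\gamma$; hence $\gamma$ is a geodesic if and only if $\nabla_X X$ vanishes along $\gamma$. Since through every point of $M$ there passes an integral curve of $X$, the condition that every integral curve be a geodesic is equivalent to $\nabla_X X = 0$ on all of $M$.

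I do not anticipate a real obstacle here; the only mild point worth writing cleanly is the derivation of the boxed identity from the Killing equation, and the observation that in $(2) \Rightarrow (3)$ one uses completeness of the flow only locally, since being a geodesic is a pointwise condition along the curve. The proof is therefore short and computational, with no case analysis required.
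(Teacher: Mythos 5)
Your proposal is correct, and the argument is essentially the canonical one: the paper itself does not prove this lemma but imports it as Lemma 3 of \cite{BerNik2009}, and the identity $Y\cdot g(X,X)=2g(\nabla_YX,X)=-2g(Y,\nabla_XX)$, obtained from metric compatibility plus the skew-symmetry of $\nabla X$ for a Killing field, is exactly the standard route, with 2)$\Leftrightarrow$3) following from $\nabla_{\dot\gamma}\dot\gamma=(\nabla_XX)\circ\gamma$ along integral curves. No gaps; the only point worth a half-sentence in a written version is that $M$ is assumed connected (as the paper states throughout), so that $Y\cdot g(X,X)=0$ for all $Y$ indeed gives $g(X,X)=\const$ globally rather than merely locally constant.
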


\begin{lemma}\label{le1}
If a Killing vector field $X\in \mathfrak{g}$ has constant length on $(M,g)$, then for any
$Y,Z\in \mathfrak{g}$ the  equalities
\begin{eqnarray}
g([Y,X],X)=0 \,,\label{e0}\\
g([Z,[Y,X]],X)+g([Y,X],[Z,X])=0\,\,\, \label{e00}
\end{eqnarray}
hold at every point of $M$.
If $G$ acts on $(M,g)$ transitively, then condition (\ref{e0}) implies that $X$ has constant length.
Moreover, the condition (\ref{e00}) also implies that $X$ has constant length for compact $M$ and transitive $G$.
\end{lemma}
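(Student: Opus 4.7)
The starting point is the standard identity for any Killing field $Y\in\mathfrak{g}$: since $L_Yg=0$, the product rule gives
\[ Y\cdot g(X,X)\;=\;2\,g([Y,X],X),\]
where $[Y,X]$ is the bracket of vector fields. (The sign $[\widetilde U,\widetilde V]=-\widetilde{[U,V]}$ recorded after~(\ref{kfaction}) enters every term of~(\ref{e0})--(\ref{e00}) with the same parity, so the two bracket conventions produce literally the same scalar identities and I will not distinguish between them.) If $g(X,X)$ is constant then the left-hand side vanishes, which is~(\ref{e0}). Applying the operator ``$Z\cdot$'' to the now-vanishing function $g([Y,X],X)$ and using $L_Zg=0$ a second time for the product rule yields
\[ 0\;=\;Z\cdot g([Y,X],X)\;=\;g([Z,[Y,X]],X)+g([Y,X],[Z,X]),\]
which is~(\ref{e00}).

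The first converse is immediate: (\ref{e0}) is exactly $Y\cdot g(X,X)=0$ for every $Y\in\mathfrak{g}$, and transitivity of $G$ means $\{\widetilde Y(p):Y\in\mathfrak{g}\}=T_pM$ at each point~$p$, so $g(X,X)$ has zero directional derivative in every tangent direction and is therefore constant on the connected manifold~$M$.

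The main work is the last converse, and it is here that compactness really enters. Put $f:=g(X,X)$; the first paragraph's computation shows that~(\ref{e00}) is equivalent to $Z\cdot(Y\cdot f)=0$ for all $Y,Z\in\mathfrak{g}$, so transitivity already forces each smooth function $h_Y:=Y\cdot f$ to be constant on~$M$. To upgrade ``constant'' to ``zero'' I use that every Killing field is divergence-free (its flow preserves the volume form $dV_g$): the divergence theorem on the closed manifold $M$ then yields
\[\int_M h_Y\,dV_g\;=\;\int_M Y(f)\,dV_g\;=\;0,\]
so the constant $h_Y$ must vanish. Thus $Y\cdot f\equiv 0$ for every $Y\in\mathfrak{g}$, and a second appeal to transitivity gives $f\equiv\const$, i.e.\ $X$ has constant length on $(M,g)$. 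The one delicate point is precisely this final compactness argument: without it, (\ref{e00}) plus transitivity only controls $f$ up to the additive constants $h_Y$, and there is no way to rule out a nonconstant $f$ whose first derivatives in algebra directions happen to be nonzero constants.
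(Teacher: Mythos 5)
Your proof is correct. The derivation of (\ref{e0}) and (\ref{e00}) and the first converse are exactly the paper's argument (the paper likewise computes $Y\cdot g(X,X)=2g([Y,X],X)$ and then applies $Z\cdot$ to the vanishing function $g([Y,X],X)$). The only genuine divergence is in the final compactness step. You observe, as the paper does, that (\ref{e00}) plus transitivity forces each $h_Y=Y\cdot g(X,X)=2g([Y,X],X)$ to be a constant $C$ on $M$; to kill the constant, the paper evaluates at a point where $g(X,X)$ attains its maximum (compactness gives such a point, and there all directional derivatives of $g(X,X)$ vanish, so $C=0$), whereas you integrate over the closed manifold and use that Killing fields are divergence-free, so $\int_M Y(g(X,X))\,dV_g=0$ and the constant must vanish. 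Both are valid one-line uses of compactness; the paper's maximum-point argument is slightly more elementary (no volume form or Stokes theorem needed), while yours isolates the structural fact that isometric flows preserve volume and would adapt verbatim to, say, any setting where $M$ has finite volume and integration by parts is available. Either way the conclusion $Y\cdot g(X,X)\equiv 0$ followed by transitivity completes the proof exactly as in the paper.
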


\begin{proof}
If $g(X,X)=\const$, then $2g([Y,X],X)= Y \cdot g(X,X)=0$ at every point of $M$ for every $Y\in \mathfrak{g}$, that proves (\ref{e0}).
From this we have
$0=Z\cdot g([Y,X],X)= g([Z,[Y,X]],X)+g([Y,X],[Z,X])$ for any $Z\in \mathfrak{g}$, that proves (\ref{e00}).

In the case of transitive action, we obviously get $g(X,X)=\const$ from the equality $Y \cdot g(X,X)=2g([X,Y],X)=0$, $Y\in \mathfrak{g}$.

Note also that in the case of transitive action the condition (\ref{e00}) means that for any $Y\in \mathfrak{g}$ we have
$g([Y,X],X)=C=\const$ on $M$. If $M$ is compact then there is $x\in M$ where $g(X,X)$ has its maximal value. Obviously, that
$g_x([Y,X],X)=0$. Therefore, $C=0$ and $X$ has constant length by the previous assertion.
\end{proof}

\begin{lemma}[see e.g. lemma 7.27 in \cite{Bes}]\label{lemma3}
For Killing vector fields $X,Y,Z$ on a Riemannian manifold $(M,g)$, the equality
$$
2g(\nabla_X Y, Z)= g([X,Y],Z)+g([X,Z],Y)+g(X,[Y,Z])=g([X,Y],Z)-Z\cdot g(X,Y)
$$
holds. In particular, $\nabla_XY=\frac{1}{2}[X,Y]$, if  $g(X,Y)=\const$ and $(M,g)$ is homogeneous.
\end{lemma}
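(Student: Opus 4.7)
The plan is to start from the Koszul formula for the Levi-Civita connection, which holds for arbitrary vector fields $X, Y, Z$:
\begin{equation*}
2g(\nabla_X Y, Z) = X g(Y,Z) + Y g(Z,X) - Z g(X,Y) + g([X,Y], Z) - g([X,Z], Y) - g([Y,Z], X).
\end{equation*}
This is the routine consequence of metric compatibility and torsion-freeness; I would recall its derivation only briefly.

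Next I would exploit the assumption that $X$, $Y$, $Z$ are all Killing. The Killing condition $L_W g = 0$, applied to the scalar function $g(U,V)$, gives the well-known identity
\begin{equation*}
W \cdot g(U,V) = g([W,U], V) + g(U, [W,V])
\end{equation*}
for any Killing field $W$ and any vector fields $U,V$. Substituting this separately for $X g(Y,Z)$, $Y g(Z,X)$, $Z g(X,Y)$ into the Koszul formula produces nine inner-product terms. The bookkeeping step — pairing matching brackets using skew-symmetry $g([A,B],C) = -g([B,A],C)$ — is then purely mechanical and collapses these nine terms, together with the original three bracket terms, down to the claimed three:
\begin{equation*}
2g(\nabla_X Y, Z) = g([X,Y], Z) + g([X,Z], Y) + g(X, [Y,Z]).
\end{equation*}
The second form $g([X,Y],Z) - Z \cdot g(X,Y)$ is then an immediate rewrite: applying the Killing identity to $Z g(X,Y)$ gives $-Z \cdot g(X,Y) = g([X,Z], Y) + g(X, [Y,Z])$.

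For the final assertion, I would argue as follows. If $g(X,Y) = \mathrm{const}$, the second form simplifies to $2g(\nabla_X Y, Z) = g([X,Y], Z)$ for every Killing field $Z$. Since $(M,g)$ is assumed homogeneous, the Killing fields span the tangent space $T_p M$ at every point $p\in M$, so this identity forces $\nabla_X Y = \tfrac{1}{2}[X,Y]$ pointwise. No step is genuinely difficult; the only point to be careful about is the sign bookkeeping in the second paragraph, which I would carry out term by term to avoid the easy-to-make mistake of conflating $g([X,Z],Y)$ with $-g([Z,X],Y)$.
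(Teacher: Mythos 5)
Your proof is correct: the Koszul formula combined with the Killing identity $W\cdot g(U,V)=g([W,U],V)+g(U,[W,V])$ yields exactly the stated identity, and the homogeneity argument (Killing fields span each tangent space, so $2\nabla_XY-[X,Y]$ is $g$-orthogonal to a spanning set) is sound. The paper itself gives no proof, citing lemma 7.27 of \cite{Bes}, and your derivation is precisely that standard argument, so there is nothing to add.
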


\begin{lemma}\label{lemma5}
Let $X$ be a Killing field and let $Y$ be a Killing field of constant length on a
Riemannian manifold $(M,g)$. Then the formula $R(X,Y)Y=-\nabla_Y\nabla_Y X$ holds on~$M$  {\rm(}$R(X,Z):=\nabla_X \nabla_Z-\nabla_Z \nabla_X-\nabla_{[X,Z]}${\rm)}.
\end{lemma}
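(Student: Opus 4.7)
The plan is to recognize the identity as the Jacobi equation applied along an integral curve of $Y$. Two standard facts feed into this. First, by Lemma~\ref{lemma1}, since $Y$ has constant length every integral curve of $Y$ is a geodesic of $(M,g)$. Second, the restriction of any Killing field to a geodesic is a Jacobi field: the one-parameter isometry group generated by the Killing field carries geodesics to geodesics, producing a geodesic variation whose variation field is exactly the restricted Killing field.

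First I would fix $p\in M$ with $Y_p\neq 0$. Let $\phi_t$ denote the flow of $Y$ and set $\gamma(t)=\phi_t(p)$, so $\gamma$ is a geodesic with $\dot\gamma(t)=Y(\gamma(t))$. Let $\psi_s$ be the one-parameter group of isometries generated by $X$. Then $(s,t)\mapsto \psi_s(\gamma(t))$ is a variation of $\gamma$ through geodesics whose variation field is $X|_\gamma$, so $X|_\gamma$ is a Jacobi field and satisfies
$$\nabla_{\dot\gamma}\nabla_{\dot\gamma}(X|_\gamma)+R(X|_\gamma,\dot\gamma)\dot\gamma=0.$$
Substituting $\dot\gamma=Y$ and identifying the covariant derivatives along $\gamma$ with $\nabla_Y$ applied to the ambient vector fields gives $\nabla_Y\nabla_Y X+R(X,Y)Y=0$ at $p$, which is precisely the desired identity.

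At any remaining point $p$ where $Y_p=0$, both $\nabla_Y\nabla_Y X$ and $R(X,Y)Y$ vanish at $p$ by tensoriality of $R$ and by the fact that $Y$ is the outermost factor in $\nabla_Y\nabla_Y X$, so the identity holds trivially there. The only subtlety, and certainly not an obstacle, is bookkeeping with the sign convention: I would need to confirm that the Jacobi equation indeed has the form $\nabla_{\dot\gamma}^2 J+R(J,\dot\gamma)\dot\gamma=0$ relative to the definition $R(X,Z):=\nabla_X\nabla_Z-\nabla_Z\nabla_X-\nabla_{[X,Z]}$ fixed in the paper; a quick check in flat space or on a round sphere suffices. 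As an alternative derivation, one can expand $R(X,Y)Y$ directly, drop $\nabla_X\nabla_Y Y$ because $\nabla_Y Y=0$, and then invoke the classical Killing-field identity $\nabla_V\nabla_W X-\nabla_{\nabla_V W}X=R(W,X)V$ specialized to $V=W=Y$.
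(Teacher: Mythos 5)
Your proof is correct and follows the same route as the paper: by Lemma~\ref{lemma1} the integral curves of $Y$ are geodesics, the restriction of the Killing field $X$ to such a geodesic is a Jacobi field, and the Jacobi equation gives $\nabla_Y\nabla_Y X+R(X,Y)Y=0$. (Your extra case $Y_p=0$ is harmless but superfluous: since $Y$ has constant length, it either vanishes identically or never vanishes.)
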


\begin{proof}
All integral curve of the field $Y$ are geodesics by lemma \ref{lemma1}.
On the other hand, the restriction of the Killing field $X$ on any geodesic is an Jacobi field (see e.g. proposition 1.3 of chapter VIII in \cite{KN}).
Hence, $\nabla_Y\nabla_Y X+R(X,Y)Y=0$ on $M$ .
\end{proof}
\medskip

We will use the identification (\ref{kfaction})
of elements of Lie algebras $\mathfrak{g}$ of $G$ with corresponding Killing vector fields on $(M,g)$ in the proof of the following

\begin{lemma}\label{s1}
Let $X,Y \in \mathfrak{g}$ be such that $g(X,Y)=C=\const$ on $M$. Then for every inner automorphism $A$ of $\mathfrak{g}$ we
get $g(A(X),A(Y))=C$ at every point of $M$. In particular, if $X\in \mathfrak{g}$ has constant length on $(M,g)$, then
$A(X)$ has the same property.
\end{lemma}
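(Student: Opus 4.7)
The goal is to transfer the pointwise equality $g(\widetilde{X},\widetilde{Y})\equiv C$ along an inner automorphism. Since every inner automorphism of $\mathfrak{g}$ is a finite product of elements of the form $e^{\ad W}=\Ad(\exp W)$ with $W\in\mathfrak{g}$, and since $\exp W\in G$, it suffices to prove the statement for $A=\Ad(a)$ where $a\in G$ is arbitrary.

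The key lemma behind the computation is the compatibility of the identification (\ref{kfaction}) with the adjoint action: for any $U\in\mathfrak{g}$ and $a\in G$,
\begin{equation*}
\widetilde{\Ad(a)(U)}\;=\;a_{\ast}\widetilde{U},
\end{equation*}
where $a_{\ast}$ denotes the pushforward of vector fields by the diffeomorphism $a\colon M\to M$. I would verify this by computing both sides at a point $x\in M$: on the one hand $\exp(s\,\Ad(a)U)=a\exp(sU)a^{-1}$, so differentiating $a\exp(sU)a^{-1}(x)$ at $s=0$ produces $(da)_{a^{-1}(x)}\bigl(\widetilde{U}(a^{-1}(x))\bigr)$, which is precisely $(a_{\ast}\widetilde{U})(x)$.

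Now combine this with the fact that $a$ is an isometry of $(M,g)$: for every $x\in M$,
\begin{equation*}
g_x\bigl(\widetilde{\Ad(a)X},\widetilde{\Ad(a)Y}\bigr)=g_x\bigl(a_{\ast}\widetilde{X},a_{\ast}\widetilde{Y}\bigr)=g_{a^{-1}(x)}\bigl(\widetilde{X},\widetilde{Y}\bigr)=C,
\end{equation*}
the last equality being the hypothesis $g(X,Y)\equiv C$. Applying the same argument to any finite product of such $\Ad(a_i)$ yields the statement for an arbitrary inner automorphism $A$. The final assertion about constant length follows by specializing to $Y=X$.

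The only subtle point, which I would flag but not belabour, is the sign convention: the identification $U\mapsto\widetilde{U}$ is an anti-homomorphism of Lie algebras (as noted in the paper, $[\widetilde{U},\widetilde{V}]=-\widetilde{[U,V]}$), but the computation above is purely group-theoretic in $G$ and so is unaffected by this sign. Thus I do not expect any real obstacle; the whole argument is an application of the isometric invariance of $g$ together with the transport formula $\widetilde{\Ad(a)U}=a_{\ast}\widetilde{U}$.
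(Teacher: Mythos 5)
Your proposal is correct and follows essentially the same route as the paper: the same computation establishing $\widetilde{\Ad(a)(U)}=a_{\ast}\widetilde{U}$ (the paper writes it as $dL_a(\widetilde{U}(x))=\widetilde{\Ad(a)(U)}(a(x))$), combined with the isometric invariance of $g$ and the fact that the inner automorphism group of $\mathfrak{g}$ is the adjoint group of the connected group $G$. No gaps; the remark about the sign in the anti-homomorphism is indeed irrelevant here, just as you say.
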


\begin{proof}
If $L_a:M \rightarrow M$ is the action of $a\in G$ on $M$,
then for any $U\in \mathfrak{g}$ we have
\begin{eqnarray*}
dL_a(\widetilde{U} (x))=\left.\frac{d}{dt}\bigl(a\exp(tU)\bigr)(x)\right|_{t=0} =\left.\frac{d}{dt} \bigl(a\exp(tU)a^{-1}\bigr)(a(x))\right|_{t=0}=\\
\left.\frac{d}{dt}\Bigl(\exp\bigl(\Ad(a)(U)t+o(t)\bigr)\Bigr)(a(x))\right|_{t=0} =\widetilde{\Ad(a)(U)}(a(x)).
\end{eqnarray*}
Since $L_a$ is an isometry of $(M,g)$, then
$$
g_x(\widetilde{X},\widetilde{Y})=g_{L_a(x)}(dL_a(\widetilde{X}), dL_a(\widetilde{Y}))=g_{a(x)}(\widetilde{\Ad(a)(X)},\widetilde{\Ad(a)(Y)})
$$
at every point $x\in M$. Recall that the inner automorphism group of $\mathfrak{g}$ coincides with the adjoint group of $G$, therefore we get the lemma.
\end{proof}

\section{Some algebraic lemmas}\label{salsec}

A Lie algebra $\mathfrak{g}$ is called {\it compact}, if it is a Lie algebra of some compact Lie group.
Any such Lie algebra admits an $\ad(\mathfrak{g})$-invariant inner product.
The following lemma is known in the literature, but we include its proof for completeness.

\begin{lemma}\label{vslem1}
Suppose that $\mathfrak{h}$ is a subalgebra of a compact Lie algebra $\mathfrak{g}$, and
$\mathfrak{p}$ is an $\langle \cdot ,\cdot \rangle$-orthogonal complement to
$\mathfrak{h}$ in $\mathfrak{g}$, where
$\langle \cdot, \cdot \rangle$ is an $\ad(\mathfrak{g})$-invariant
inner product on $\mathfrak{g}$.
Then the sets $\mathfrak{p}+ [\mathfrak{p},\mathfrak{p}]$ and
$\mathfrak{u}:=\{Z\in \mathfrak{h}\, |\, [Z,\mathfrak{p}]=0\}$ are ideals in $\mathfrak{g}$.
\end{lemma}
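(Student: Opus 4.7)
The plan is to first exploit the $\ad$-invariance of $\langle\cdot,\cdot\rangle$ to get reductivity of the decomposition $\mathfrak{g}=\mathfrak{h}\oplus\mathfrak{p}$, then verify the ideal claim for $\mathfrak{m}:=\mathfrak{p}+[\mathfrak{p},\mathfrak{p}]$ by a piece-by-piece check, and finally identify $\mathfrak{u}$ as the orthogonal complement of $\mathfrak{m}$, which gives the second ideal claim automatically.

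First I would establish $[\mathfrak{h},\mathfrak{p}]\subseteq\mathfrak{p}$. For any $H,H'\in\mathfrak{h}$ and $P\in\mathfrak{p}$, $\ad$-invariance gives $\langle[H,P],H'\rangle=-\langle P,[H,H']\rangle=0$, since $[H,H']\in\mathfrak{h}$ is orthogonal to $\mathfrak{p}$. This reductivity is the only ``structural'' input used below.

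Next I would check $[\mathfrak{g},\mathfrak{m}]\subseteq\mathfrak{m}$ in four cases. The inclusions $[\mathfrak{h},\mathfrak{p}]\subseteq\mathfrak{p}\subseteq\mathfrak{m}$ and $[\mathfrak{p},\mathfrak{p}]\subseteq\mathfrak{m}$ are tautological. For $H\in\mathfrak{h}$ and $P_1,P_2\in\mathfrak{p}$, Jacobi plus reductivity gives $[H,[P_1,P_2]]=[[H,P_1],P_2]+[P_1,[H,P_2]]\in[\mathfrak{p},\mathfrak{p}]\subseteq\mathfrak{m}$. For $[\mathfrak{p},[\mathfrak{p},\mathfrak{p}]]$ I would split $[P_1,P_2]=H_0+Q$ according to $\mathfrak{g}=\mathfrak{h}\oplus\mathfrak{p}$, obtaining $[H_0,P]\in[\mathfrak{h},\mathfrak{p}]\subseteq\mathfrak{p}$ and $[Q,P]\in[\mathfrak{p},\mathfrak{p}]$, so $[P,[P_1,P_2]]=-[H_0,P]-[Q,P]\in\mathfrak{p}+[\mathfrak{p},\mathfrak{p}]=\mathfrak{m}$.

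Finally, for $\mathfrak{u}$ I plan to identify $\mathfrak{u}=\mathfrak{m}^{\perp}$ and invoke the general fact that the orthogonal complement of an ideal with respect to an $\ad$-invariant form is itself an ideal. The inclusion $\mathfrak{m}^{\perp}\subseteq\mathfrak{h}$ is immediate from $\mathfrak{p}\subseteq\mathfrak{m}$, and for $X\in\mathfrak{h}$ the identity $\langle X,[P_1,P_2]\rangle=\langle[X,P_1],P_2\rangle$ (from $\ad$-invariance) shows that $X\perp[\mathfrak{p},\mathfrak{p}]$ if and only if $[X,P_1]\in\mathfrak{p}$ is orthogonal to all of $\mathfrak{p}$, which happens exactly when $[X,\mathfrak{p}]=0$. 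The only mildly delicate point in the whole plan is the splitting argument used for $[\mathfrak{p},[\mathfrak{p},\mathfrak{p}]]$; everything else is a direct verification from Jacobi and $\ad$-invariance.
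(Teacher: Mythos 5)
Your proof is correct, but it runs the paper's argument in the opposite direction. The paper first proves directly that $\mathfrak{u}$ is an ideal: since $[\mathfrak{u},\mathfrak{p}]=0$ by definition, it only remains to check $[\mathfrak{u},\mathfrak{h}]\subset\mathfrak{u}$, which follows from a single application of the Jacobi identity, $[\mathfrak{p},[\mathfrak{u},\mathfrak{h}]]\subset[[\mathfrak{p},\mathfrak{u}],\mathfrak{h}]+[\mathfrak{u},[\mathfrak{p},\mathfrak{h}]]=0$ (using reductivity $[\mathfrak{h},\mathfrak{p}]\subset\mathfrak{p}$). It then obtains $\mathfrak{p}+[\mathfrak{p},\mathfrak{p}]$ for free as the $\langle\cdot,\cdot\rangle$-orthogonal complement of $\mathfrak{u}$, via exactly the equivalence you also use: for $Z\in\mathfrak{h}$, $\langle Z,[\mathfrak{p},\mathfrak{p}]\rangle=0\iff\langle[Z,\mathfrak{p}],\mathfrak{p}\rangle=0\iff[Z,\mathfrak{p}]=0$. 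You instead verify by a four-case bracket computation (with the splitting of $[\mathfrak{p},\mathfrak{p}]$ into $\mathfrak{h}$- and $\mathfrak{p}$-components) that $\mathfrak{m}=\mathfrak{p}+[\mathfrak{p},\mathfrak{p}]$ is an ideal, and then recover $\mathfrak{u}=\mathfrak{m}^{\perp}$ and invoke the standard fact that orthogonal complements of ideals are ideals for an $\ad$-invariant inner product. Both routes rest on the same two ingredients (reductivity and the orthogonality duality between $\mathfrak{u}$ and $[\mathfrak{p},\mathfrak{p}]$); the paper's ordering is a bit more economical, since the direct ideal check for $\mathfrak{u}$ needs only one Jacobi step, whereas your ordering has the mild advantage that the ideal property of $\mathfrak{p}+[\mathfrak{p},\mathfrak{p}]$ is established by pure bracket algebra, independently of the inner product.
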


\begin{proof}
Since $[\mathfrak{u},\mathfrak{h}] \subset \mathfrak{h}$ and $\mathfrak{p}$
is $\ad(\mathfrak{h})$-invariant, we get
$$
[\mathfrak{p},[\mathfrak{u},\mathfrak{h}]] \subset [[\mathfrak{p},\mathfrak{u}],\mathfrak{h}]]+
[\mathfrak{u},[\mathfrak{p},\mathfrak{h}]]=0,
$$
hence $[\mathfrak{u},\mathfrak{h}] \subset \mathfrak{u}$. Since $[\mathfrak{u},\mathfrak{p}]=0$, we get that
$\mathfrak{u}$ is an ideals in $\mathfrak{g}$. It is easy to check that
$\mathfrak{p}+ [\mathfrak{p},\mathfrak{p}]$
is the complementary ideal to $\mathfrak{u}$ in $\mathfrak{g}$.
Indeed, for $Z\in \mathfrak{h}$ the condition $\langle Z, [\mathfrak{p},\mathfrak{p}]\rangle =0$ is equivalent
to every of the conditions $\langle [Z,\mathfrak{p}],\mathfrak{p}\rangle =0$ and $[Z,\mathfrak{p}]=0$.
\end{proof}

We will need also the following generalization of the previous lemma (compare e.g. with the proof of Theorem 2.1 in \cite{WZ3}).

\begin{lemma}\label{vslem2} Let $\mathfrak{g}$ be a compact Lie algebra,
$\langle \cdot, \cdot \rangle$ be an $\ad(\mathfrak{g})$-invariant
inner product on $\mathfrak{g}$, $\mathfrak{h}$ be a subalgebra of $\mathfrak{g}$.
Suppose that the $\langle \cdot ,\cdot \rangle$-orthogonal
complement $\mathfrak{p}$ to $\mathfrak{h}$ in $\mathfrak{g}$ is of the type
$$
\mathfrak{p}=\mathfrak{p}_1\oplus \mathfrak{p}_2 \oplus \cdots \oplus \mathfrak{p}_k,
$$
where every $\mathfrak{p}_i$ is $\ad(\mathfrak{h})$-invariant and $\ad(\mathfrak{h})$-irreducible, $\langle \mathfrak{p}_i,\mathfrak{p}_j \rangle=0$ and
$[\mathfrak{p}_i,\mathfrak{p}_j]=0$ for $i\neq j$. For $1\leq i \leq k$ put
$\mathfrak{h}_i:=\bigl(\mathfrak{p}_i+[\mathfrak{p}_i,\mathfrak{p}_i]\bigr) \cap \mathfrak{h}$
and $\mathfrak{g}_i:=\mathfrak{h}_i \oplus \mathfrak{p}_i$. Then the following assertions hold:

{\rm 1)} $\mathfrak{g}_i=\mathfrak{p}_i+[\mathfrak{p}_i,\mathfrak{p}_i]$ for every $1 \leq i \leq k$;

{\rm 2)} every $\mathfrak{g}_i$ is an ideal in $\mathfrak{g}$;

{\rm 3)} $\langle \mathfrak{g}_i,\mathfrak{g}_j \rangle =0$ for $i\neq j$;

{\rm 4)} every pair $(\mathfrak{g}_i, \mathfrak{h}_i)$ is effective and isotropy irreducible;

{\rm 5)} $\mathfrak{u}$, the $\langle \cdot,\cdot \rangle$-orthogonal complement to
$\bigoplus_i \mathfrak{g}_i$ in $\mathfrak{g}$,
is an ideal in $\mathfrak{g}$;

{\rm 6)} $\mathfrak{g}=\mathfrak{u}\oplus \mathfrak{g}_1\oplus \mathfrak{g}_2 \oplus \cdots \oplus \mathfrak{g}_k$,
$\mathfrak{h}=\mathfrak{u}\oplus \mathfrak{h}_1\oplus \mathfrak{h}_2 \oplus \cdots \oplus \mathfrak{h}_k$.
\end{lemma}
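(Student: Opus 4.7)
The plan is to reduce (1)--(3) and (5)--(6) to quick consequences of Lemma \ref{vslem1} together with $\ad$-invariance of $\langle\cdot,\cdot\rangle$, and to isolate the isotropy irreducibility part of (4) as the main step.

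The preparatory observation is that the $\mathfrak{p}$-component of $[\mathfrak{p}_j,\mathfrak{p}_j]$ lies in $\mathfrak{p}_j$ for each $j$: for $A,B\in\mathfrak{p}_j$ and $C\in\mathfrak{p}_l$ with $l\ne j$, $\ad$-invariance gives $\langle[A,B],C\rangle=-\langle B,[A,C]\rangle=0$ by $[\mathfrak{p}_j,\mathfrak{p}_l]=0$. Consequently $\mathfrak{h}\oplus\bigoplus_{j\ne i}\mathfrak{p}_j$ is a subalgebra of $\mathfrak{g}$ with $\langle\cdot,\cdot\rangle$-orthogonal complement $\mathfrak{p}_i$, and applying lemma \ref{vslem1} to this pair shows that $\mathfrak{p}_i+[\mathfrak{p}_i,\mathfrak{p}_i]$ is an ideal of $\mathfrak{g}$; combining with the preparatory observation and the definition of $\mathfrak{h}_i$ gives $\mathfrak{p}_i+[\mathfrak{p}_i,\mathfrak{p}_i]=\mathfrak{h}_i\oplus\mathfrak{p}_i=\mathfrak{g}_i$, proving (1) and (2) simultaneously. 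For (3), I first note that $[\mathfrak{p}_j,\mathfrak{h}_i]=0$ for $j\ne i$ (immediate by Jacobi and $[\mathfrak{p}_j,\mathfrak{p}_i]=0$); then for $A,B\in\mathfrak{p}_i$ and $H\in\mathfrak{h}_j$, $\langle[A,B],H\rangle=-\langle B,[A,H]\rangle=0$, giving $\langle\mathfrak{h}_i,\mathfrak{h}_j\rangle=0$, and the remaining orthogonalities in (3) are clear. Assertions (5) and (6) are then formal: $\mathfrak{u}$ is the orthogonal complement of the ideal $\bigoplus_i\mathfrak{g}_i$ and hence itself an ideal by $\ad$-invariance, while $\mathfrak{u}\perp\mathfrak{p}_i$ for every $i$ forces $\mathfrak{u}\subset\mathfrak{h}$, yielding the stated direct-sum decompositions of $\mathfrak{g}$ and $\mathfrak{h}$.

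The substantive step is (4). Effectiveness of $(\mathfrak{g}_i,\mathfrak{h}_i)$ follows by noting that any ideal $\mathfrak{a}\subset\mathfrak{h}_i$ of $\mathfrak{g}_i$ satisfies $[\mathfrak{a},\mathfrak{p}_i]\subset\mathfrak{p}_i\cap\mathfrak{h}_i=0$, whence $\langle\mathfrak{a},[A,B]\rangle=-\langle[A,\mathfrak{a}],B\rangle=0$ for all $A,B\in\mathfrak{p}_i$ forces $\mathfrak{a}\perp\mathfrak{h}_i$, so $\mathfrak{a}=0$. The main obstacle is isotropy irreducibility, since the hypothesis provides only $\ad(\mathfrak{h})$-irreducibility of $\mathfrak{p}_i$ while the conclusion demands $\ad(\mathfrak{h}_i)$-irreducibility, and an $\ad(\mathfrak{h}_i)$-invariant subspace of $\mathfrak{p}_i$ need not a priori be $\ad(\mathfrak{h})$-invariant. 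My plan is to close the gap by establishing $\mathfrak{h}=\mathfrak{h}_i+\mathfrak{k}_i$, where $\mathfrak{k}_i:=\{H\in\mathfrak{h}:[H,\mathfrak{p}_i]=0\}$: given $H_2\in\mathfrak{h}$ orthogonal to $\mathfrak{h}_i$, for every $A,B\in\mathfrak{p}_i$ one has $0=\langle[A,B],H_2\rangle=-\langle B,[A,H_2]\rangle$, and since $[A,H_2]\in\mathfrak{p}_i$ by $\ad(\mathfrak{h})$-invariance of $\mathfrak{p}_i$, varying $B$ over $\mathfrak{p}_i$ forces $[A,H_2]=0$, so $H_2\in\mathfrak{k}_i$. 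Consequently $\mathfrak{h}$ and $\mathfrak{h}_i$ induce the same operators on $\mathfrak{p}_i$, and the $\ad(\mathfrak{h})$-irreducibility of $\mathfrak{p}_i$ descends to the required $\ad(\mathfrak{h}_i)$-irreducibility.
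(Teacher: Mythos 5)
Your proof is correct, and it shares the paper's basic toolkit (the orthogonality computation showing $\operatorname{pr}_{\mathfrak{p}}[\mathfrak{p}_i,\mathfrak{p}_i]\subset\mathfrak{p}_i$, ad-invariance of $\langle\cdot,\cdot\rangle$, and Lemma \ref{vslem1}), but it is organized differently in two respects. First, the paper applies Lemma \ref{vslem1} only once, to the original pair $(\mathfrak{h},\mathfrak{p})$, in order to show that $\mathfrak{u}$ is an ideal, and obtains the remaining structure from a chain of bracket and orthogonality identities ($[\mathfrak{p}_j,[\mathfrak{p}_i,\mathfrak{p}_i]]=0$, $\langle[\mathfrak{p}_i,\mathfrak{p}_i],[\mathfrak{p}_j,\mathfrak{p}_j]\rangle=0$, $[\mathfrak{g}_i,\mathfrak{g}_j]=0$), after which assertions 2) and 4) are declared clear; you instead apply Lemma \ref{vslem1} once per index, to the enlarged subalgebra $\mathfrak{h}\oplus\bigoplus_{j\neq i}\mathfrak{p}_j$ with complement $\mathfrak{p}_i$, which yields at one stroke that each $\mathfrak{g}_i=\mathfrak{p}_i+[\mathfrak{p}_i,\mathfrak{p}_i]$ is an ideal, and you get the idealness of $\mathfrak{u}$ from the standard fact that the orthogonal complement of an ideal is an ideal. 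Second, you supply in full the part the paper leaves implicit, namely assertion 4): your verification of effectiveness, and especially the identity $\mathfrak{h}=\mathfrak{h}_i+\mathfrak{k}_i$ with $\mathfrak{k}_i=\{H\in\mathfrak{h}\,|\,[H,\mathfrak{p}_i]=0\}$, is exactly what is needed to transfer the hypothesis of $\ad(\mathfrak{h})$-irreducibility of $\mathfrak{p}_i$ to the required $\ad(\mathfrak{h}_i)$-irreducibility (in the paper this is recoverable from the decomposition $\mathfrak{h}=\mathfrak{u}\oplus\mathfrak{h}_1\oplus\cdots\oplus\mathfrak{h}_k$ together with $[\mathfrak{u},\mathfrak{p}_i]=[\mathfrak{h}_j,\mathfrak{p}_i]=0$ for $j\neq i$, but it is not written out). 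The trade-off: the paper's single application of Lemma \ref{vslem1} keeps the argument compact, while your per-index application makes assertion 2) immediate and your explicit treatment of 4) closes the one genuinely nontrivial gap a careful reader must fill.
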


\begin{proof}
At first, we prove that $\mathfrak{g}_i=\mathfrak{p}_i+[\mathfrak{p}_i,\mathfrak{p}_i]$ for every $1\leq i \leq k$.
Consider any $j \neq i$, $1\leq j \leq k$.
For every $X,Z \in \mathfrak{p}_i$ and $Y \in \mathfrak{p}_j$, we have $\langle [Z,X],Y\rangle =-\langle X, [Z,Y]\rangle =0$,
since $[\mathfrak{p}_i, \mathfrak{p}_j]=0$ and $\langle \cdot, \cdot \rangle$ is $\ad(\mathfrak{g})$-invariant.
Hence, $\langle [\mathfrak{p}_i,\mathfrak{p}_i], \mathfrak{p}_j \rangle=0$,
that implies ${\operatorname{pr}}_{\mathfrak{p}}\bigl([\mathfrak{p}_i,\mathfrak{p}_i]\bigr) \subset \mathfrak{p}_i$
and, therefore,
$\mathfrak{g}_i=\mathfrak{p}_i+[\mathfrak{p}_i,\mathfrak{p}_i]$.

Since $[\mathfrak{p}_i,\mathfrak{p}_j] =0$, then by the Jacobi equality we get
$[\mathfrak{p}_j, [\mathfrak{p}_i,\mathfrak{p}_i]] =0$.
Therefore,
$\langle [\mathfrak{p}_j, [\mathfrak{p}_i,\mathfrak{p}_i]], \mathfrak{g} \rangle =0$ and
$\langle [\mathfrak{p}_i,\mathfrak{p}_i], [\mathfrak{p}_j, \mathfrak{g}] \rangle =0$ by the $\ad(\mathfrak{g})$-invariance of
$\langle \cdot, \cdot \rangle$.
From $\langle [\mathfrak{p}_i,\mathfrak{p}_i], [\mathfrak{p}_j, \mathfrak{g}] \rangle =0$ we get
$\langle [\mathfrak{p}_i,\mathfrak{p}_i], [\mathfrak{p}_j, \mathfrak{p}_j] \rangle =0$ in particular. This equality together with
the equalities $\langle [\mathfrak{p}_i,\mathfrak{p}_i], \mathfrak{p}_j \rangle=0$,
$\langle [\mathfrak{p}_j,\mathfrak{p}_j], \mathfrak{p}_i \rangle=0$, and $\langle \mathfrak{p}_i, \mathfrak{p}_j\rangle =0$
imply
$\langle \mathfrak{g}_i, \mathfrak{g}_j\rangle =0$, since $\mathfrak{g}_i=\mathfrak{p}_i + [\mathfrak{p}_i,\mathfrak{p}_i]$
and $\mathfrak{g}_j=\mathfrak{p}_j + [\mathfrak{p}_j,\mathfrak{p}_j]$.

Since $[\mathfrak{p}_j, [\mathfrak{p}_i,\mathfrak{p}_i]] =0$  and $[\mathfrak{p}_j,\mathfrak{p}_i]=0$ for $i\neq j$, then
$[\mathfrak{p}_j, \mathfrak{g}_i] =0$,
$0=[\mathfrak{p}_j,[\mathfrak{g}_i,\mathfrak{p}_j]]=[\mathfrak{g}_i, [\mathfrak{p}_j,\mathfrak{p}_j]]$, and $[\mathfrak{g}_i, \mathfrak{g}_j] =0$.

It is clear that $\mathfrak{u} \subset \mathfrak{h}$.
Since $\langle \mathfrak{u}, [\mathfrak{p}_i,\mathfrak{p}_i]\rangle =0$ for any $i$, then
$\langle [\mathfrak{p}_i,\mathfrak{u}], \mathfrak{p}_i \rangle =0$, which means, that $[\mathfrak{u},\mathfrak{p}]=0$. It is easy to check
that $\mathfrak{u}=\{Z\in \mathfrak{h}\, |\, [Z,\mathfrak{p}]=0\}$. By the Lemma \ref{vslem1} we get that $\mathfrak{u}$ is an ideal
in $\mathfrak{g}$. Now, all assertions of the lemma are clear.

At first, we prove that $\mathfrak{g}_i=\mathfrak{p}_i+[\mathfrak{p}_i,\mathfrak{p}_i]$ for every $1\leq i \leq k$.
Consider any $j \neq i$, $1\leq j \leq k$.
Since $[\mathfrak{p}_i,\mathfrak{p}_j] =0$, then by the Jacobi equality we get
$[\mathfrak{p}_j, [\mathfrak{p}_i,\mathfrak{p}_i]] =0$.
Therefore,
$\langle [\mathfrak{p}_j, [\mathfrak{p}_i,\mathfrak{p}_i]], \mathfrak{g} \rangle =0$ and
$\langle [\mathfrak{p}_i,\mathfrak{p}_i], [\mathfrak{p}_j, \mathfrak{g}] \rangle =0$ by the $\ad(\mathfrak{h})$-invariance of
$\langle \cdot, \cdot \rangle$.
Since $\mathfrak{p}_j$ is $\ad(\mathfrak{h})$-irreducible, we get
$\langle [\mathfrak{p}_i,\mathfrak{p}_i], [\mathfrak{p}_j, \mathfrak{h}] \rangle =
\langle [\mathfrak{p}_i,\mathfrak{p}_i], \mathfrak{p}_j \rangle=0$. This means that
$\operatorname{pr}_{\mathfrak{p}}([\mathfrak{p}_i,\mathfrak{p}_i]) \subset \mathfrak{p}_i$ and, therefore,
$\mathfrak{g}_i=\mathfrak{p}_i+[\mathfrak{p}_i,\mathfrak{p}_i]$.

From $\langle [\mathfrak{p}_i,\mathfrak{p}_i], [\mathfrak{p}_j, \mathfrak{g}] \rangle =0$ we get also that
$\langle [\mathfrak{p}_i,\mathfrak{p}_i], [\mathfrak{p}_j, \mathfrak{p}_j] \rangle =0$. This equality together with
$\langle [\mathfrak{p}_i,\mathfrak{p}_i], \mathfrak{p}_j \rangle=0$ imply
$\langle [\mathfrak{p}_i,\mathfrak{p}_i], \mathfrak{g}_j \rangle =0$.

Further, $\langle [\mathfrak{p}_i,\mathfrak{p}_j], \mathfrak{g}\rangle =0$, then by the $\ad(\mathfrak{h})$-invariance of
$\langle \cdot, \cdot \rangle$ we get $\langle \mathfrak{p}_i, [\mathfrak{g},\mathfrak{p}_j]\rangle =0$, in particular,
$\langle \mathfrak{p}_i, \mathfrak{g}_j\rangle =0$.
This equation and $\langle [\mathfrak{p}_i,\mathfrak{p}_i], \mathfrak{g}_j \rangle =0$ imply
$\langle \mathfrak{g}_i, \mathfrak{g}_j\rangle =0$ for $i\neq j$, since
$\mathfrak{g}_i=\mathfrak{p}_i + [\mathfrak{p}_i,\mathfrak{p}_i]$.

Since $[\mathfrak{p}_j, [\mathfrak{p}_i,\mathfrak{p}_i]] =0$ for $i\neq j$, then
$[\mathfrak{p}_j, \mathfrak{g}_i] =0$,
$0=[\mathfrak{p}_j,[\mathfrak{g}_i,\mathfrak{p}_j]]=[\mathfrak{g}_i, [\mathfrak{p}_j,\mathfrak{p}_j]]$ and $[\mathfrak{g}_i, \mathfrak{g}_j] =0$.

It is clear that $\mathfrak{u} \subset \mathfrak{h}$.
Since $\langle \mathfrak{u}, [\mathfrak{p}_i,\mathfrak{p}_i]\rangle =0$ for any $i$, then
$\langle [\mathfrak{p}_i,\mathfrak{u}], \mathfrak{p}_i \rangle =0$, which means, that $[\mathfrak{u},\mathfrak{p}]=0$. It is easy to check
that $\mathfrak{u}=\{Z\in \mathfrak{h}\, |\, [Z,\mathfrak{p}]=0\}$. By the Lemma \ref{vslem1} we get that $\mathfrak{u}$ is an ideal
in $\mathfrak{g}$. Now, all assertions of the lemma are clear.
\end{proof}

\medskip
Let us consider any compact Lie algebra $\mathfrak{g}$ with $\ad(\mathfrak{g}$)-invariant inner product $\langle \cdot ,\cdot \rangle$.
Take any $Z\in \mathfrak{g}$ and consider the operator $L_Z:\mathfrak{g}\rightarrow \mathfrak{g}$, $L_Z(X)=[Z,X]$.
This operator is skew-symmetric, but $L_Z^2$ is a symmetric operator on $\mathfrak{g}$
with respect to $\langle \cdot, \cdot \rangle$.
Put $\mathfrak{m}_0=\kerr L_{Z} =\{Y\in \mathfrak{g}\,|\,[Z,Y]=0\}$ and
consider all non-zero eigenvalues of the operator $L_Z^2$:
$-\lambda_i^2$, $i=1,\dots,s$, where $0< \lambda_1<\lambda_1<\cdots <\lambda_s$, and the corresponding eigenspaces
$\mathfrak{m}_{\lambda_i}=\{Y\in \mathfrak{g}\,|\,L_Z^2(Y)=[Z,[Z,Y]]=-\lambda_i^2 Y\}$.
It is clear that $\mathfrak{m}_0$ is a Lie subalgebra of $\mathfrak{g}$ of maximal rank and
\begin{equation}\label{imagez}
\mathfrak{m}:=\mathfrak{m}_{\lambda_1}\oplus \mathfrak{m}_{\lambda_2} \oplus\cdots \oplus \mathfrak{m}_{\lambda_s}=\im L_Z.
\end{equation}

Note that this decomposition and the decomposition $\mathfrak{g}=\mathfrak{m}_0\oplus\mathfrak{m}$ are $\langle \cdot, \cdot \rangle$-orthogonal.
It follows from the simple observation: If $P\subset \mathfrak{g}$ is invariant subspace of the operator $L_Z^2$,
then its $\langle \cdot, \cdot \rangle$-orthogonal complement $P^\bot$ is also invariant subspace of $L_Z^2$.

We will need the following simple

\begin{lemma}\label{newle1}
For any $X\in \mathfrak{m}_0$ and $Y\in \mathfrak{m}_{\alpha}$ we have $[X,Y]\in \mathfrak{m}_{\alpha}$, i.e.
every $\mathfrak{m}_{\alpha}$ is $\ad(\mathfrak{m}_0)$-invariant. In particular, $[Z,\mathfrak{m}_{\alpha}]\subset \mathfrak{m}_{\alpha}$.
\end{lemma}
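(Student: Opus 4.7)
The plan is to verify directly that $L_Z^2$ commutes with $\operatorname{ad}(X)$ for any $X\in\mathfrak{m}_0$, and deduce invariance of the eigenspaces $\mathfrak{m}_\alpha$ from this.

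First I would take $X\in\mathfrak{m}_0$, so that $[Z,X]=0$, and $Y\in\mathfrak{m}_\alpha$, so that $[Z,[Z,Y]]=-\alpha^2 Y$. Applying the Jacobi identity to $Z,X,Y$ and using $[Z,X]=0$ gives
\begin{equation*}
[Z,[X,Y]]=[[Z,X],Y]+[X,[Z,Y]]=[X,[Z,Y]].
\end{equation*}
Then I would apply the Jacobi identity a second time, now to $Z,X,[Z,Y]$, again using $[Z,X]=0$:
\begin{equation*}
L_Z^2([X,Y])=[Z,[Z,[X,Y]]]=[Z,[X,[Z,Y]]]=[X,[Z,[Z,Y]]]=[X,-\alpha^2 Y]=-\alpha^2[X,Y].
\end{equation*}
This shows $[X,Y]\in\mathfrak{m}_\alpha$, proving $\operatorname{ad}(\mathfrak{m}_0)$-invariance of each $\mathfrak{m}_\alpha$.

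For the final clause, since $Z$ commutes with itself, $Z\in\mathfrak{m}_0$, and the computation above with $X=Z$ gives $[Z,\mathfrak{m}_\alpha]\subset\mathfrak{m}_\alpha$ as a special case (equivalently, $L_Z^2([Z,Y])=[Z,L_Z^2 Y]=-\alpha^2[Z,Y]$).

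Since the argument is just two invocations of the Jacobi identity, there is no real obstacle; the only thing to keep straight is that $[Z,X]=0$ kills one term at each step, which is precisely what makes $\operatorname{ad}(X)$ commute with $L_Z^2$ and hence preserve its eigenspaces.
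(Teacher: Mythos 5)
Your proof is correct and is essentially identical to the paper's: both apply the Jacobi identity (with $[Z,X]=0$) twice to show $L_Z^2([X,Y])=[X,L_Z^2(Y)]=-\alpha^2[X,Y]$, and the final clause follows since $Z\in\mathfrak{m}_0$.
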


\begin{proof} We have $L_Z([X,Y])=[X,[Z,Y]]$ and $L_Z^2([X,Y])=[X,[Z,[Z,Y]]]=-\lambda_i^2[X,Y]$.
\end{proof}

Now we define a linear operator $\sigma:\mathfrak{m} \rightarrow \mathfrak{m}$ as follows:
\begin{equation}\label{soprya}
\sigma(Y)=\frac{1}{{\lambda}_i}[Z,Y],\qquad Y\in \mathfrak{m}_{\lambda_i}.
\end{equation}

In fact, this operator define a complex structure on a flag manifold $G/C_G(Z)$, where $C_G(Z)$ is a centralizer of $Z$ in the group $G$
(see e.g. chapter 8 in \cite{Bes}).
\medskip

For any $X,Y \in \mathfrak{m}$ we define also
\begin{equation}\label{plusminus}
[X,Y]^+:=\frac{1}{2}\bigl([X,Y]-[\sigma(X),\sigma(Y)] \bigr),\quad [X,Y]^-:=\frac{1}{2}\bigl([X,Y]+[\sigma(X),\sigma(Y)] \bigr).
\end{equation}
Obviously, $[X,Y]=[X,Y]^++[X,Y]^-$.

\begin{pred}\label{newpr1} In the above notation, $\sigma(\mathfrak{m}_{\alpha})\subset \mathfrak{m}_{\alpha}$ for all ${\alpha}$  and $\sigma(\sigma(Y))=-Y$
for all $Y\in \mathfrak{m}$.
If $U\in \mathfrak{m}_{\alpha}$, $V\in \mathfrak{m}_{\beta}$, then
$$
[U,V]^+\in \mathfrak{m}_{\alpha+\beta}, \quad
[U,V]^-\in \mathfrak{m}_{|\alpha-\beta|}.
$$
\end{pred}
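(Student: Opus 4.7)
The plan is to reduce everything to a clean computation with the operator $L_Z$ via the Jacobi identity, using the fact that $L_Z$ commutes with $L_Z^2$, so the eigenspace decomposition is $L_Z$-invariant.

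\textbf{Step 1 (Invariance of eigenspaces under $\sigma$).} For $Y \in \mathfrak{m}_{\alpha}$, I would compute $L_Z^2(L_Z Y) = L_Z(L_Z^2 Y) = -\alpha^2 L_Z Y$, so $L_Z Y = \alpha \sigma(Y)$ also lies in $\mathfrak{m}_{\alpha}$. This gives $\sigma(\mathfrak{m}_{\alpha}) \subset \mathfrak{m}_{\alpha}$.

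\textbf{Step 2 (Involutivity).} For $Y \in \mathfrak{m}_{\alpha}$, by Step 1 we may apply $\sigma$ again and get
\[
\sigma(\sigma(Y)) = \frac{1}{\alpha}[Z, \sigma(Y)] = \frac{1}{\alpha^2}[Z,[Z,Y]] = \frac{1}{\alpha^2}L_Z^2(Y) = -Y.
\]

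\textbf{Step 3 (Eigenvalue computation for $[U,V]^\pm$).} This is the main step. Fix $U\in\mathfrak{m}_{\alpha}$ and $V\in\mathfrak{m}_{\beta}$. Using $[Z,U]=\alpha\sigma(U)$, $[Z,V]=\beta\sigma(V)$ (and the analogous identities for $\sigma(U)$, $\sigma(V)$ coming from Step 2), I apply the Jacobi identity twice. A direct bookkeeping gives
\[
L_Z^2([U,V]) = -(\alpha^2+\beta^2)[U,V] + 2\alpha\beta\,[\sigma(U),\sigma(V)],
\]
and by the same computation with $U \mapsto \sigma(U)$, $V \mapsto \sigma(V)$ (noting $\sigma(U)\in\mathfrak{m}_\alpha$, $\sigma(V)\in\mathfrak{m}_\beta$) together with $\sigma^2=-\mathrm{Id}$,
\[
L_Z^2([\sigma(U),\sigma(V)]) = -(\alpha^2+\beta^2)[\sigma(U),\sigma(V)] + 2\alpha\beta\,[U,V].
\]
Subtracting and adding these two identities then yields
\[
L_Z^2([U,V]^+) = -(\alpha+\beta)^2\,[U,V]^+, \qquad L_Z^2([U,V]^-) = -(\alpha-\beta)^2\,[U,V]^-.
\]

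\textbf{Step 4 (From eigen-equation to eigenspace membership).} A priori, $[U,V]^\pm \in \mathfrak{g} = \mathfrak{m}_0\oplus\mathfrak{m}$, so I must rule out a spurious component in $\mathfrak{m}_0$. Here I would use that $L_Z$ is skew-symmetric with respect to $\langle\cdot,\cdot\rangle$, hence $\ker L_Z^2 = \ker L_Z = \mathfrak{m}_0$ (from $\langle L_Z^2 w,w\rangle = -|L_Z w|^2$). For $[U,V]^+$ and for $[U,V]^-$ when $\alpha\neq\beta$, the eigenvalues $-(\alpha\pm\beta)^2$ are nonzero, so writing $[U,V]^\pm = w_0 + w$ with $w_0\in\mathfrak{m}_0$, $w\in\mathfrak{m}$, the identities in Step 3 force $w_0=0$ and $w$ to lie in the $-(\alpha\pm\beta)^2$-eigenspace $\mathfrak{m}_{\alpha+\beta}$ or $\mathfrak{m}_{|\alpha-\beta|}$ (which is zero if that value is not among the $\lambda_i$). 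When $\alpha=\beta$, the identity $L_Z^2([U,V]^-)=0$ together with $\ker L_Z^2=\mathfrak{m}_0$ places $[U,V]^-$ in $\mathfrak{m}_0 = \mathfrak{m}_{|\alpha-\beta|}$.

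The only real obstacle is the careful Jacobi bookkeeping in Step 3; everything else is a short consequence of the spectral properties of $L_Z$. The symmetry between $U$ and $\sigma(U)$ ensured by $\sigma^2=-\mathrm{Id}$ is what produces the clean factorization $(\alpha\pm\beta)^2$, and thus identifies $[\,\cdot\,,\,\cdot\,]^\pm$ with the components of the bracket shifting the eigenvalue by $\alpha+\beta$ or $|\alpha-\beta|$.
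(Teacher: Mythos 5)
Your proposal is correct and follows essentially the same route as the paper: the Jacobi-identity computation of $L_Z^2([U,V])$ and $L_Z^2([\sigma(U),\sigma(V)])$, followed by adding and subtracting to obtain the eigen-equations with eigenvalues $-(\alpha\pm\beta)^2$. Your Steps 1 and 4 merely make explicit what the paper leaves to lemma \ref{newle1} and to the skew-symmetry of $L_Z$ (so that $\kerr L_Z^2=\kerr L_Z=\mathfrak{m}_0$), which is fine.
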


\begin{proof} The first assertion follows from lemma \ref{newle1}, the equality $\sigma^2=-\Id$ is obvious.
By this equality and definitions we have
\begin{eqnarray*}
L_Z([U,V])=[Z,[U,V]]=[[Z,U],V]+[U,[Z,V]]={\alpha}\,[\sigma(U),V]+{\beta}[U,\sigma(V)],\\
L_Z([\sigma(U),\sigma(V)])\!=\![[Z,\sigma(U)],\sigma(V)]+[\sigma(U),[Z,\sigma(V)]]\!=\!\!-{\alpha}\,[U,\sigma(V)]-{\beta}[\sigma(U),V],\\
L_Z^2([U,V])=[Z,[Z,[U,V]]]=-({\alpha}^2+{\beta}^2)[U,V]+2{\alpha}{\beta}[\sigma(U),\sigma(V)],\\
L_Z^2([\sigma(U),\sigma(V)])=[Z,[Z,[\sigma(U),\sigma(V)]]]=-({\alpha}^2+{\beta}^2)[\sigma(U),\sigma(V)]+2{\alpha}{\beta}[U,V].
\end{eqnarray*}
Hence, we get $L_Z^2\bigl([U,V]-[\sigma(U),\sigma(V)]\bigr)=-({\alpha}+{\beta})^2\bigl([U,V]-[\sigma(U),\sigma(V)]\bigr)$ and
$L_Z^2\bigl([U,V]+[\sigma(U),\sigma(V)]\bigr)=-({\alpha}-{\beta})^2\bigl([U,V]+[\sigma(U),\sigma(V)]\bigr)$.
\end{proof}
\smallskip

Note that $\rk(\mathfrak{m}_0)=\rk(\mathfrak{g})$. This means that there is a unique (up to order of summands) $\langle \cdot, \cdot \rangle$-orthogonal
$\ad(\mathfrak{m}_0)$-invariant decomposition of $\mathfrak{m}$ into $\ad(\mathfrak{m}_0)$-irreducible summands (see e.g. theorem 5.3 in \cite{Kos57})
\begin{equation}\label{imagezn}
\mathfrak{m}=\mathfrak{p}_1\oplus \mathfrak{p}_2 \oplus\cdots \oplus \mathfrak{p}_t.
\end{equation}

Note, that every $\mathfrak{p}_i$, $1\leq i\leq t$, is a subspace of a suitable $\mathfrak{m}_{\lambda_j}$, $1\leq j \leq s$.
Note also that every $\mathfrak{p}_i$ is invariant under the operator $\sigma$, because $Z\in \mathfrak{m}_0$.

\section{Root space decomposition for simple compact Lie algebras}\label{rsdsec}

We give here some information on the root system of a compact simple
Lie algebra $(\mathfrak{g},\langle \cdot,\cdot \rangle=-B)$ with the
Killing form $B$, that can be found e.g. in \cite{Burb4, Hel, Sam, WolfBook}.

Let us fix a Cartan subalgebra  $\mathfrak{t}$ (that is maximal abelian subalgebra) of Lie algebra~$\mathfrak{g}$.
There is a set $\Delta$ (\textit{root system}) of (non-zero) real-valued linear form $\alpha \in \mathfrak{t}^{\ast}$ on the Cartan subalgebra
$\mathfrak{t}$, that are called \textit{roots}. Let us consider some positive root system $\Delta^+ \subset \Delta$. Recall that for any
$\alpha \in \Delta$ exactly one of the roots $\pm \alpha$ is positive (we denote it by $|\alpha|$).
The Lie algebra $\mathfrak{g}$ admits a direct
$\langle \cdot,\cdot \rangle$-orthogonal decomposition
\begin{equation}\label{rsd}
\mathfrak{g}=\mathfrak{t}\oplus \bigoplus_{\alpha \in \Delta^+} \mathfrak{v}_{\alpha}
\end{equation}
into vector subspaces, where
each subspace $\mathfrak{v}_{\alpha}$ is  2-dimensional and
$\ad(\mathfrak{t})$-invariant. Using the
restriction (of non-degenerate) inner product $\langle \cdot,\cdot \rangle$ to
$\mathfrak{t}$, we will naturally identify  $\alpha$ with some vector
in $\mathfrak{t}$. Note that $[\mathfrak{v}_{\alpha}, \mathfrak{v}_{\alpha}]$ is one-dimensional subalgebra in~$\mathfrak{t}$ spanned on
the root $\alpha$,
and $[\mathfrak{v}_{\alpha}, \mathfrak{v}_{\alpha}]\oplus \mathfrak{v}_{\alpha}$ is a Lie algebra isomorphic to $su(2)$.
The vector subspaces $\mathfrak{v}_{\alpha}$, $\alpha \in \Delta^+$, admit bases
$\{U_{\alpha},V_{\alpha}\}$, such that $\langle U_{\alpha},U_{\alpha}\rangle=\langle V_{\alpha},V_{\alpha} \rangle= 1$,
$\langle U_{\alpha},V_{\alpha}\rangle=0$
and
\begin{equation}\label{N}
[H,U_{\alpha}]=\langle \alpha,H \rangle V_{\alpha},\quad
[H,V_{\alpha}]=-\langle \alpha,H \rangle U_{\alpha}, \quad \forall H\in \mathfrak{t}, \quad
[U_{\alpha},V_{\alpha}]={\alpha}.
\end{equation}

Note also, that $[\mathfrak{v}_{\alpha},\mathfrak{v}_{\beta}]=\mathfrak{v}_{\alpha+\beta}+\mathfrak{v}_{|\alpha-\beta|}$,
assuming  $\mathfrak{v}_{\gamma}:=\{0\}$ for $\gamma \notin \Delta^+$.
\medskip

For a positive root system $\Delta^+$ the (closed) Weyl chamber is defined by the equality
\begin{equation}\label{carcham}
C=C(\Delta^+):=\{H\in\mathfrak{t}\,|\,\langle \alpha,H\rangle \geq 0\,\, \forall \alpha\in \Delta^+\}.
\end{equation}
Recall some important properties of the Weyl group $W=W(\mathfrak{t})$ of the Lie algebra $\mathfrak{g}$, that acts on the Cartan subalgebra
$\mathfrak{t}$.

(i) For every root $\alpha \in \Delta \subset \mathfrak{t}$ the Weyl
group $W$ contains the orthogonal reflection $\varphi_{\alpha}$ in
the plane $P_{\alpha}$, which is orthogonal to the root $\alpha$
with respect to $\langle \cdot,\cdot \rangle$. It is easy to see that $\varphi_{\alpha}(H)=H-2\frac{\langle H,\alpha\rangle}
{\langle \alpha,\alpha\rangle}\alpha$, $H\in \mathfrak{t}$.

(ii) Reflections from (i) generate $W$.

(iii) The root system $\Delta$ is invariant under the action of the Weyl group $W$.

(iv) $W$ acts irreducible on $\mathfrak{t}$ and simply transitively on the set of positive root systems.
For any $H\in \mathfrak{t}$,there is $w\in W$, such that $w(H)\in  C(\Delta^+)$.

(v)  For any $X\in \mathfrak{g}$, there is an inner automorphism $\psi$ of $\mathfrak{g}$ such that $\psi(X)\in \mathfrak{t}$.
For any $w\in W$, there is an inner automorphism $\eta$ of $\mathfrak{g}$, such that $\mathfrak{t}$ is stable under $\eta$, and the restriction of $\eta$
to $\mathfrak{t}$ coincides with $w$.

(vi) The Weyl group $W$ acts transitively on the set of positive roots of  fixed length.
\smallskip

For a given positive root system $\Delta^+$, we denote by $\Pi=\{\pi_1,\dots,\pi_r\}$ the set of simple root in $\Delta^+$, $r=\rk(\mathfrak{g})$.
Note that every $\alpha\in \Delta^+$ there is a unique decomposition $\alpha=\sum_{i=1}^r a_i \pi_i$ with non-negative integer $a_i$.
There is the maximal root $\beta\in \Delta^+$ that is characterized by the fact that $\beta -\alpha$ is a non-negative linear combination of roots
in $\Delta^+$ for all other positive roots  $\alpha$.
We will denote this root by $\alpha_{\max}$.

\smallskip

We list below some important properties of some root systems which we will use later (see \cite{Bourbaki}).
In all cases $(\mathfrak{t},\langle \cdot,\cdot\rangle)$ is identified with a suitable subspace of Euclidean space $\mathbb{R}^n$ with the orthonormal
basis $\{e_i\}$.
\smallskip

{\bf Case 1.} $A_l=su(l+1)$, $l\geq 1$: $\mathfrak{t}=\!\{(x_1,x_2,\dots,x_{l+1})\in \mathbb{R}^{l+1}\,|\,x_1+x_2+\cdots +x_{l+1}=0\}$,
\begin{eqnarray*}
\Delta=\{ e_i-e_j\,|\, i\neq j, 1\leq i,j \leq l+1\},\\
\Pi=\{\pi_i=e_i-e_{i+1}\,|\,i=1,\dots,l\}, \quad \alpha_{\max}=\sum_{i=1}^l \pi_i.
\end{eqnarray*}

{\bf Case 2.} $B_l=so(2l+1)$, $l\geq 2$: $\mathfrak{t}=\mathbb{R}^l$,
\begin{eqnarray*}
\Delta=\{\pm e_i\,|\, i=1,2,\dots,l \}\cup\{ \pm e_i\pm e_j\,|\,1\leq i<j \leq l\},\\
\Pi=\{\pi_i=e_i-e_{i+1}\,|\,i=1,\dots,l-1\}\cup\{\pi_l=e_l\}, \quad \alpha_{\max}=\pi_1+2\sum_{i=2}^l \pi_i.
\end{eqnarray*}

{\bf Case 3.} $C_l=sp(l)$, $l\geq 2$: $\mathfrak{t}=\mathbb{R}^l$,
\begin{eqnarray*}
\Delta=\{\pm 2e_i\,|\, i=1,2,\dots,l \}\cup\{ \pm e_i\pm e_j\,|\,1\leq i<j \leq l\},\\
\Pi=\{\pi_i=e_i-e_{i+1}\,|\,i=1,\dots,l-1\}\cup\{\pi_l=2e_l\},\quad \alpha_{\max}=2\sum_{i=1}^{l-1} \pi_i+\pi_l.
\end{eqnarray*}

{\bf Case 4.} $D_l=so(2l)$, $l\geq 3$: $\mathfrak{t}=\mathbb{R}^l$,
\begin{eqnarray*}
\Delta=\{ \pm e_i\pm e_j\,|\,1\leq i<j \leq l\},\\
\Pi=\{\pi_i=e_i-e_{i+1}\,|\,i=1,\dots,l-1\}\cup\{\pi_l=e_{l-1}+e_l\},\\
\alpha_{\max}=\pi_1+2\sum_{i=2}^{l-2} \pi_i+\pi_{l-1}+\pi_l.
\end{eqnarray*}

{\bf Case 5.} $e_6$: $\mathfrak{t}=\{(x_1,x_2,\dots,x_8)\in \mathbb{R}^8\,|\, x_6=x_7=-x_8\}$,
{\small\begin{eqnarray*}
\Delta=\{ \pm e_i\pm e_j\,|\,1\leq i<j \leq 5\}\cup\\
\left\{\pm \frac{1}{2}\bigl(e_8-e_7-e_6+\sum_{i=1}^5 (-1)^{v_i} e_i\bigr)\,|\, v_i\in\{0,1\},\,\sum_{i=1}^5 v_i \mbox{ is even} \right\},\\
\Pi=\left\{\pi_1=\frac{1}{2}(e_1+e_8)-\frac{1}{2}\sum_{i=2}^7 e_i \right\}\cup\{\pi_2=e_1+e_2\}\cup\{\pi_i=e_{i-1}-e_{i-2}\,|\,i=3,\dots,6\},\\
\alpha_{\max}=\pi_1+2\pi_2+2\pi_3+3\pi_4+2\pi_5+\pi_6.
\end{eqnarray*}}

{\bf Case 6.} $e_7$: $\mathfrak{t}=\{x\in \mathbb{R}^8\,|\, x \mbox{ orthogonal to } e_7+e_8\}$,
{\small \begin{eqnarray*}
\Delta=\{ \pm e_i\pm e_j\,|\,1\leq i<j \leq 6\}\cup\{\pm (e_7-e_8)\}\cup\\
\left\{\pm \frac{1}{2}\bigl(e_7-e_8+\sum_{i=1}^6 (-1)^{v_i} e_i\bigr)\,|\, v_i\in\{0,1\},\,\sum_{i=1}^6 v_i \mbox{ is odd} \right\},\\
\Pi=\left\{\pi_1=\frac{1}{2}(e_1+e_8)-\frac{1}{2}\sum_{i=2}^7 e_i \right\}\cup\{\pi_2=e_1+e_2\}\cup\{\pi_i=e_{i-1}-e_{i-2}\,|\,i=3,\dots,7\},\\
\alpha_{\max}=2\pi_1+2\pi_2+3\pi_3+4\pi_4+3\pi_5+2\pi_6+\pi_7.
\end{eqnarray*}}
\medskip

Recall that a simple root $\pi_i$ is called {\it non-compact} if, for every $\alpha \in \Delta$ either $\alpha$ is of the form
$\alpha=\pm \sum_{j\neq i} a_j \pi_j$, or $\alpha$ is of the form $\alpha=\pm \left(\pi_i+\sum_{j\neq i} a_j \pi_j\right)$, where
$a_i \geq 0$.  The Lie algebras from {\bf cases 1-6} are exactly simple Lie algebras that have non-compact roots. These are the roots with coefficient $1$
in the decomposition of the maximal root $\alpha_{\max}$.  Non-compact root are closely related to Hermitian symmetric spaces, see
\cite{Wolf64} or \cite{WolfBook} for a comprehensive description.

\section{Killing fields of constant length on \\ compact homogeneous Riemannian manifolds}\label{gen1}

In this section we suppose that $(M,g)$ is a compact Riemannian manifold,
and $G$ is a connected compact transitive isometry group
of $(M,g)$. We fix any $\Ad(G)$-invariant inner product $\langle \cdot, \cdot\rangle$ on the Lie algebra $\mathfrak{g}$.

\begin{pred}\label{pre1}
For a Killing field $Z\in \mathfrak{g}$, consider $A=\kerr L_Z$ and $B= \im L_Z$,  where
$L_Z:\mathfrak{g} \rightarrow \mathfrak{g}$, $L_Z(X)=[Z,X]$. Then the following conditions are equivalent:

{\rm 1)} $Z$ has constant length on $(M,g)$;

{\rm 2)} $g([Z,[Z,Y]],Z)=0$ on $(M,g)$ for every $Y\in \mathfrak{g}$;

{\rm 3)} $g(Z,Y)=0$  on $(M,g)$ for any $Y \in B$.
\end{pred}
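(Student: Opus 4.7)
The plan is to establish the cyclic chain $1)\Rightarrow 2)\Rightarrow 3)\Rightarrow 1)$. As a preliminary I would fix an $\Ad(G)$-invariant inner product $\langle\cdot,\cdot\rangle$ on $\mathfrak{g}$, available since $G$ is compact. With respect to it the operator $L_Z$ is skew-symmetric, so $A=\ker L_Z$ and $B=\im L_Z$ are orthogonal complements in $\mathfrak{g}$; consequently $L_Z|_B\colon B\to B$ is a linear automorphism (its kernel is contained in $A\cap B=\{0\}$), and in particular $L_Z^2(\mathfrak{g})=L_Z(B)=B$. This piece of linear algebra is what I expect to be the main, though not severe, step.

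For $1)\Rightarrow 2)$, I would apply Lemma \ref{le1}, equation (\ref{e0}), with the vector $[Z,Y]$ substituted for the dummy $Y$: this immediately gives $g([[Z,Y],Z],Z)=0$, which is exactly condition 2) up to a sign produced by skew-symmetry of the bracket.

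For $2)\Rightarrow 3)$, condition 2) reads $g_p(Z,L_Z^2(Y))=0$ at every $p\in M$ for every $Y\in\mathfrak{g}$. Using the preliminary identification $L_Z^2(\mathfrak{g})=B$, as $Y$ ranges over $\mathfrak{g}$ the vector $L_Z^2(Y)$ ranges over all of $B$, so condition 2) is equivalent to $g_p(Z,Y')=0$ for every $Y'\in B$, i.e., condition 3).

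Finally, for $3)\Rightarrow 1)$, I would observe that for every $Y\in\mathfrak{g}$ the vector $[Y,Z]=-L_Z(Y)$ lies in $B$, so condition 3) forces $g_p([Y,Z],Z)=0$ on $M$. Since $Y\cdot g(Z,Z)=2g([Y,Z],Z)$, the function $g(Z,Z)$ is annihilated by every Killing field in $\mathfrak{g}$; because $G$ acts transitively on $M$, these fields span the tangent space at each point, so $g(Z,Z)$ is constant and $Z$ has constant length.
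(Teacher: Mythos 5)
Your proof is correct and takes essentially the same route as the paper's: fix an $\Ad(G)$-invariant inner product, exploit the skew-symmetry of $L_Z$ together with Lemma \ref{le1}, and run the cycle $1)\Rightarrow 2)\Rightarrow 3)\Rightarrow 1)$. The only cosmetic differences are that for $2)\Rightarrow 3)$ you use that $L_Z^2$ maps $\mathfrak{g}$ onto $B=\im L_Z$ (via invertibility of $L_Z|_B$) where the paper decomposes $B$ into eigenspaces of $L_Z^2$ with nonzero eigenvalues, and for $3)\Rightarrow 1)$ you observe directly that $[Y,Z]\in B$ for every $Y\in\mathfrak{g}$ instead of differentiating $g(Y,Z)=0$ along $Z$; both variations are sound.
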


\begin{proof}
If $Z$ is of constant length, then for any $X \in \mathfrak{g}$ we have
$g([Z,X],Z)=0$ at every point of $M$ by lemma \ref{le1}. In particular we may choose $X=[Z,Y]$.
Therefore,~1) implies 2).

Suppose that $g([Z,[Z,Y]],Z)=0$ for every $Y\in \mathfrak{g}$.
If we consider any $\Ad(G)$-invariant inner product $\langle \cdot,\cdot \rangle$ on
$\mathfrak{g}$, then the operator $L_Z:\mathfrak{g} \rightarrow \mathfrak{g}$ is skew-symmetric.
Hence the operator $L_Z^2$ is symmetric with non-positive eigenvalues.
We have $\mathfrak{g}=A\oplus B$ and consider
$B=B_1\oplus \cdots \oplus B_k$, where $B_i$ are the eigenvalue spaces for the operator $L^2_Z$ with
the eigenvalues $-{\lambda}_i^2 \neq 0$. For any $Y \in B_i$ we have
$$
0=g([Z,[Z,Y]],Z)=g(L_Z^2(Y),Z)=-{\lambda}_i^2 g(Y,Z).
$$
Therefore, $g(Z,B_i)$=0 for any $i=1,\dots,k$, and 2) implies 3).

Now suppose that $g(Z,Y)=0$ for any $Y \in B$.  Then $0=Z\cdot g(Y,Z)=-g([Y,Z],Z)$. Moreover, we obviously have $g([Y,Z],Z)=0$  for any $Y\in A$.
By lemma \ref{le1} we get that $Z$ has constant length.
\end{proof}

Note that similar result are known in the literature, see e.g. lemma 2.4 in \cite{BMS2003}.

\medskip

\begin{proof}[Proof of theorem \ref{theorem1}]
It is easy to see that $\mathfrak{g}_i=A_i\oplus B_i$, where
$A_i=\kerr L_{Z_i} \cap \mathfrak{g}_i=\kerr L_{Z} \cap \mathfrak{g}_i$ and
$B_i=\im L_{Z_i}=\im L_{Z} \cap \mathfrak{g}_i$. Note, that the operators $L_Z$ and $L_{Z_0+Z_i}$
are invertible on $B_i$.

Since $\mathfrak{g}_i$ is simple, then the pair $(\mathfrak{g}_i,A_i)$ is effective
(this means that the subalgebra $A_i$ contains no nontrivial ideal of $\mathfrak{g}_i$).
By lemma \ref{vslem1}, $B_i+[B_i,B_i]$ is an ideal in $\mathfrak{g}_i$, hence
$B_i+[B_i,B_i]=\mathfrak{g}_i$.

Since the operators $L_Z$ invertible on $B_i$,  we get $g(B_i,Z)=0$ for any $1\leq i \leq l$ by proposition \ref{pre1}.
Take any $j\neq i$, $1\leq j \leq l$. Then
$$
0=B_j\cdot g(B_i,Z)=g(B_i,[B_j,Z]),
$$
since $[B_i,B_j]=0$. Then we get that $g(B_i,B_j)=0$ (we have used that the operator $L_Z$ is invertible on $B_j$).
Further,
$0=B_i\cdot g(B_i,B_j)=g([B_i,B_i],B_j)$. Therefore, $g(\mathfrak{g}_i,B_j)=0$, and
$0=B_j\cdot g(\mathfrak{g}_i,B_j)=g(\mathfrak{g}_i,[B_j,B_j])$. Hence,
$g(\mathfrak{g}_i,\mathfrak{g}_j)=0$, that proves the first assertion.

Since $Z$ has constant length, then by proposition \ref{pre1}, we get
$g([Z,[Z,Y]],Z)=0$ for every $Y \in B_i$. But in this case $L_Z^2 (Y)=L_{Z_0+Z_i}^2(Y)$ and
$g(\mathfrak{g}_i,Z_j)=0$ for any $j\neq i$. Therefore, we get
$g\bigl([Z_0+Z_i,[Z_0+Z_i,Y]],Z_0+Z_i\bigr)=0$ for any $Y\in B_i$. It is obvious that the same is true for
$Y\in A_i$. It is clear also that $g([Z_0+Z_i,[Z_0+Z_i,Y]],Z_0+Z_i)=0$ for any $Y\in \mathfrak{c}$ and for any $Y\in \mathfrak{g}_j$, $j\neq i$.
Therefore, $Z_0+Z_i$ has constant length by proposition \ref{pre1}.

Let us prove the last assertion of the theorem. Since  $Z_0+Z_i$,
$1\leq i \leq l$, has constant length, then $g\bigl([Z_0+Z_i,[Z_0+Z_i,Y]],Z_0+Z_i\bigr)=0$ for any $Y\in B_i$ by proposition \ref{pre1}.
On the other hand, $g(\mathfrak{g}_i, \mathfrak{g}_j)=0$ for every
$i\neq j$, $1\leq i,j \leq l$. Therefore, $g([Z,[Z,Y]],Z)=g([Z_0+Z_i,[Z_0+Z_i,Y]],Z)=0$ for any $Y\in B_i$.
This equality holds for every $1\leq i \leq l$. It is easy to see that
$g([Z,[Z,Y]],Z)=0$ for any $Y\in \mathfrak{c}$ and for any $Y\in \mathfrak{g}_j$, $j >l$.
Therefore, $Z$ has constant length by proposition \ref{pre1}.
\end{proof}

\medskip

\begin{proof}[Proof of corollary \ref{cortheo1}] For $X\in \mathfrak{g}_i$ and $Y\in \mathfrak{g}_j$, where $i\neq j$, we have $g(X,Y)=0$
by theorem \ref{theorem1}. Then by lemma \ref{lemma3}, we have $\nabla_XY=0$, because $[X,Y]=0$. Moreover, for any $Z\in \mathfrak{g}_i$
we get $0=Z\cdot g(X,Y)=g(\nabla_ZX,Y)+g(X,\nabla_ZY)=g(\nabla_ZX,Y)$. Therefore, $\nabla_{\mathfrak{g}_i}\mathfrak{g}_i \subset \mathfrak{g}_i$ and
$\mathfrak{g}_i$ is a parallel distribution. Since $i=1,\dots,k$ is an arbitrary, we get a global splitting
by the de Rham theorem (see e.g. theorem 6.1 of chapter IV in \cite{KN}) for simply connected $M$.
\end{proof}

\bigskip

The rest of this section is devoted to the proof of theorems \ref{theorem2} and \ref{theorem3}.
Let us consider a Killing field $Z\in \mathfrak{g}$ of constant length.
Recall that the operators $L_Z$ and $L_Z^2$ are skew-symmetric and symmetric respectively.

We consider $\mathfrak{m}_0=\kerr L_{Z} =\{Y\in \mathfrak{g}\,|\,[Z,Y]=0\}$ and
the eigenspace decomposition
\begin{eqnarray*}
\mathfrak{m}&=&\mathfrak{m}_{\lambda_1}\oplus \mathfrak{m}_{\lambda_2} \oplus\cdots \oplus \mathfrak{m}_{\lambda_s}=\im L_Z, \\
\mathfrak{m}_{\lambda_i}&=&\{Y\in \mathfrak{g}\,|\,L_Z^2(Y)=[Z,[Z,Y]]=-\lambda_i^2 Y\} .
\end{eqnarray*}

Since for any $Y \in \mathfrak{m}_{\lambda_i}$ we have $L_Z^2(Y)=[Z,[Z,Y]]=-\lambda_i^2 Y$, then
$\lambda_i^2 g(Z,Y)=-g([Z,[Z,Y]],Z)=0$ by proposition \ref{pre1}. Therefore, we get

\begin{corollary}\label{orthog}
The equality $g(Z,\mathfrak{m})=0$ holds on $(M,g)$.
\end{corollary}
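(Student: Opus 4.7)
The plan is to apply Proposition \ref{pre1} directly to each eigenspace summand of $\mathfrak{m}$. Since $Z$ has constant length, Proposition \ref{pre1} gives $g([Z,[Z,Y]],Z) = 0$ at every point of $M$ for every $Y \in \mathfrak{g}$. I would then restrict attention to $Y \in \mathfrak{m}_{\lambda_i}$, where by definition of the eigenspace we have $L_Z^2(Y) = [Z,[Z,Y]] = -\lambda_i^2\, Y$. Substituting this in yields the pointwise equality
\[
-\lambda_i^2\, g(Z,Y) = g([Z,[Z,Y]],Z) = 0
\]
on $M$. Since $\lambda_i \neq 0$ for every $i = 1, \dots, s$ (only nonzero eigenvalues appear in the definition of $\mathfrak{m}_{\lambda_i}$), this forces $g(Z, Y) = 0$ on $M$ for each $Y \in \mathfrak{m}_{\lambda_i}$.

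Finally, since $\mathfrak{m}$ is the direct sum of the subspaces $\mathfrak{m}_{\lambda_1}, \dots, \mathfrak{m}_{\lambda_s}$, by linearity of $g$ in its arguments I conclude $g(Z, \mathfrak{m}) = 0$ at every point of $M$, as required.

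There is no real obstacle here: the corollary is an immediate consequence of the equivalence of conditions 1) and 2) in Proposition \ref{pre1} together with the eigenspace decomposition (\ref{imagez}). The only thing to double-check is that the statement $g([Z,[Z,Y]],Z) = 0$ of Proposition \ref{pre1}(2) holds pointwise on $M$ (not just in some integrated sense), which it does, so the conclusion $g(Z,\mathfrak{m}) = 0$ also holds pointwise on $M$.
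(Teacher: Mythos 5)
Your proof is correct and coincides with the paper's own argument: both apply the implication 1) $\Rightarrow$ 2) of proposition \ref{pre1} to $Y\in\mathfrak{m}_{\lambda_i}$, use $[Z,[Z,Y]]=-\lambda_i^2Y$ with $\lambda_i\neq 0$ to get $g(Z,Y)=0$ pointwise, and then sum over the eigenspaces in (\ref{imagez}).
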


\begin{pred}\label{newpr2}
Suppose that $\mathfrak{p}$ is an $\ad(\mathfrak{m}_0)$-invariant submodule in $\mathfrak{m}$,
and $\mathfrak{p}^\bot$ is its $\langle \cdot,\cdot \rangle$-orthogonal complement in $\mathfrak{m}$.
Then $g(\mathfrak{p},\mathfrak{p}^\bot)=0$ on $(M,g)$.
In particular, $g(\mathfrak{m}_{\alpha},\mathfrak{m}_{\beta})=0$ on $(M,g)$ for $\alpha \neq \beta$.
\end{pred}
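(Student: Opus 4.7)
The plan is to reduce the pointwise vanishing of $g(\mathfrak{p},\mathfrak{p}^\bot)$ on $M$ to Corollary~\ref{orthog}, using the $\ad(\mathfrak{g})$-invariance of $\langle\cdot,\cdot\rangle$ together with the $\ad(\mathfrak{m}_0)$-invariance of $\mathfrak{p}$. First I would record a preparatory remark: since $\ad(\mathfrak{m}_0)$ acts on $\mathfrak{m}$ by $\langle\cdot,\cdot\rangle$-skew-symmetric operators, the orthogonal complement $\mathfrak{p}^\bot$ is itself $\ad(\mathfrak{m}_0)$-invariant. Because $Z\in\mathfrak{m}_0$, it follows that both $\mathfrak{p}$ and $\mathfrak{p}^\bot$ are $L_Z$-stable, and $L_Z$ restricts to an invertible operator on each of them (since it is invertible on all of $\mathfrak{m}$).

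The central step will be the following. Given $X\in\mathfrak{p}$ and $Y\in\mathfrak{p}^\bot$, I write $X=[Z,X_1]$ for some $X_1\in\mathfrak{p}$, using the invertibility above. Because $Y\in\mathfrak{m}$, Corollary~\ref{orthog} gives $g(Z,Y)\equiv 0$ on $M$, so differentiating this identically vanishing function along the Killing field $X_1$ via the standard Killing-field identity will produce, at every point of $M$, the relation
\[
g(X,Y)\;=\;g([Z,X_1],Y)\;=\;g(Z,[X_1,Y]).
\]
To close the argument I decompose $[X_1,Y]=P+Q$ with respect to the $\langle\cdot,\cdot\rangle$-orthogonal splitting $\mathfrak{g}=\mathfrak{m}_0\oplus\mathfrak{m}$. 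The $\mathfrak{m}$-piece contributes nothing because $g(Z,Q)=0$ by Corollary~\ref{orthog}. For the $\mathfrak{m}_0$-piece, $\ad(\mathfrak{g})$-invariance yields $\langle[X_1,Y],H\rangle=\langle Y,[H,X_1]\rangle$ for every $H\in\mathfrak{m}_0$, and this vanishes because $[H,X_1]\in\mathfrak{p}$ (by $\ad(\mathfrak{m}_0)$-invariance of $\mathfrak{p}$) while $Y\in\mathfrak{p}^\bot$; hence $P=0$ and $g(X,Y)=0$.

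The ``in particular'' clause is then immediate: each $\mathfrak{m}_\alpha$ is $\ad(\mathfrak{m}_0)$-invariant by Lemma~\ref{newle1}, and the eigenspaces of the $\langle\cdot,\cdot\rangle$-symmetric operator $L_Z^2$ are pairwise $\langle\cdot,\cdot\rangle$-orthogonal, so $\mathfrak{m}_\beta\subset\mathfrak{m}_\alpha^\bot$ whenever $\beta\neq\alpha$. The only genuinely non-obvious move in the whole argument is the substitution $X=[Z,X_1]$, which converts the unknown quantity $g(X,Y)$ into the already-controlled form $g(Z,\cdot)$; once this is done, $\ad$-invariance annihilates $P$ while Corollary~\ref{orthog} annihilates $Q$, so no serious obstacle remains.
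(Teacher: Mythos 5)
Your argument is correct and is essentially the paper's own proof: both rest on Corollary~\ref{orthog}, the differentiation identity for Killing fields, the $\ad$-invariance computation showing the relevant bracket lies in $\mathfrak{m}$, and the invertibility of $L_Z$ on the $L_Z$-stable subspaces of $\mathfrak{m}$. The only difference is cosmetic: you invert $L_Z$ at the start (writing $X=[Z,X_1]$ and differentiating $g(Z,Y)=0$ along $X_1$), whereas the paper differentiates $g(Z,X)=0$ along $Y\in\mathfrak{p}^\bot$ and inverts $L_Z$ on $\mathfrak{p}^\bot$ at the end.
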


\begin{proof}
By corollary \ref{orthog} we have $g(Z,\mathfrak{m})=0$. Let us consider any $X\in \mathfrak{p}$ and
any $Y \in \mathfrak{p}^\bot$.
Then $0=Y\cdot g(Z,X)=g([Y,Z],X)+g(Z,[Y,X])$.  Note that
$\langle [Y,X],W\rangle=\langle Y,[X,W]\rangle=0$ for any $W\in \mathfrak{m}_0$, hence $[Y,X]\in \mathfrak{m}$.
From this  and proposition \ref{pre1} we get $g(Z,[Y,X])=0$. Therefore,
$g([Y,Z],X)=-g(L_Z(Y),X)=0$. Since $\mathfrak{p}$ is an invariant subspace of $L_Z$, and $L_Z$ is invertible on $\mathfrak{m}$,
this proves the proposition.
\end{proof}
\medskip

Now we consider decomposition (\ref{imagezn}) in our case:
$\mathfrak{m}=\mathfrak{p}_1\oplus \mathfrak{p}_2 \oplus\cdots \oplus \mathfrak{p}_t$.
From proposition \ref{newpr2} we obviously get

\begin{corollary}\label{decomort}
The equality $g(\mathfrak{p}_i,\mathfrak{p}_j)=0$ holds on $(M,g)$ for every $i\neq j$.
\end{corollary}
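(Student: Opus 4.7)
The plan is to obtain this as an immediate consequence of Proposition \ref{newpr2}. Fix an index $i$ and set $\mathfrak{p}=\mathfrak{p}_i$. By the construction of the decomposition (\ref{imagezn}), each summand $\mathfrak{p}_i$ is $\ad(\mathfrak{m}_0)$-invariant (even $\ad(\mathfrak{m}_0)$-irreducible), so the hypothesis of Proposition \ref{newpr2} is satisfied. Moreover, the decomposition $\mathfrak{m}=\mathfrak{p}_1\oplus\mathfrak{p}_2\oplus\cdots\oplus\mathfrak{p}_t$ is $\langle\cdot,\cdot\rangle$-orthogonal, which is exactly what makes it the unique decomposition into $\ad(\mathfrak{m}_0)$-irreducible summands (invoked just before (\ref{imagezn}) via Kostant's theorem, using $\rk(\mathfrak{m}_0)=\rk(\mathfrak{g})$). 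Consequently $\mathfrak{p}_i^{\bot}=\bigoplus_{j\neq i}\mathfrak{p}_j$ inside $\mathfrak{m}$.

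Applying Proposition \ref{newpr2} with $\mathfrak{p}=\mathfrak{p}_i$ and $\mathfrak{p}^{\bot}=\bigoplus_{j\neq i}\mathfrak{p}_j$ then yields $g(\mathfrak{p}_i,\mathfrak{p}_j)=0$ at every point of $M$ for every $j\neq i$. Since $i$ was arbitrary, the claimed orthogonality holds for all pairs $i\neq j$. There is no genuine obstacle here: the only thing to be careful about is making sure the $\langle\cdot,\cdot\rangle$-orthogonality of the $\mathfrak{p}_i$'s is recorded explicitly so that $\mathfrak{p}_j\subset\mathfrak{p}_i^{\bot}$, after which the corollary is a one-line deduction from the proposition.
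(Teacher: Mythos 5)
Your proposal is correct and is exactly the paper's argument: the paper derives the corollary as an immediate consequence of Proposition \ref{newpr2}, applied to each $\ad(\mathfrak{m}_0)$-invariant summand $\mathfrak{p}_i$ of the $\langle\cdot,\cdot\rangle$-orthogonal decomposition (\ref{imagezn}), with $\mathfrak{p}_i^{\bot}=\bigoplus_{j\neq i}\mathfrak{p}_j$. Nothing further is needed.
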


We will need the following construction: For all $i\geq 1$ and all $X\in \mathfrak{p}_i$ we define
\begin{equation}\label{hvector}
h(X)=[X,\sigma(X)],
\end{equation}
where $\sigma(X)$ is defined by  the equality (\ref{soprya}): ${\lambda}_i \sigma(Y)=[Z,Y]$.

\begin{pred}\label{newpr3}
For all $i\geq 1$ and all $X\in \mathfrak{p}_i$ we have $h(X)\in \mathfrak{m}_0$ and $g(X,\sigma(X))=g(X,h(X))=g(\sigma(X),h(X))=0$ on $(M,g)$.
Moreover, for any $j\neq i$, $j\geq 1$, the equality $g(h(U),\mathfrak{p}_j)=0$ holds on $(M,g)$.
\end{pred}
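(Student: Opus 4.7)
The first assertion $h(X) \in \mathfrak{m}_0$ is purely algebraic. Both $X$ and $\sigma(X)$ lie in the single eigenspace $\mathfrak{m}_{\lambda_i}$ by Proposition~\ref{newpr1}, so I apply that proposition with $U = X$ and $V = \sigma(X)$. Using $\sigma^2 = -\Id$ I compute
\begin{equation*}
[X,\sigma(X)]^+ = \tfrac{1}{2}\bigl([X,\sigma(X)] - [\sigma(X),\sigma(\sigma(X))]\bigr) = \tfrac{1}{2}\bigl([X,\sigma(X)] + [\sigma(X),X]\bigr) = 0,
\end{equation*}
so $h(X) = [X,\sigma(X)] = [X,\sigma(X)]^- \in \mathfrak{m}_{|\lambda_i-\lambda_i|} = \mathfrak{m}_0$.

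The three scalar identities on $\mathfrak{p}_i$ follow by differentiating identically-vanishing functions along Killing fields. The basic mechanism is that if $A, B \in \mathfrak{g}$ satisfy $g(A,B) \equiv 0$ on $M$, then for every $Y \in \mathfrak{g}$ the Killing property of $Y$ forces $g([Y,A], B) + g(A,[Y,B]) = 0$ at every point (with Lie brackets of $\mathfrak{g}$). Starting from $g(Z,X) \equiv 0$ (Corollary~\ref{orthog}) and differentiating along $X$ yields $g([X,Z], X) = 0$, i.e.\ $-\lambda_i g(\sigma(X), X) = 0$, so $g(X,\sigma(X)) = 0$. Differentiating the new identity $g(X,\sigma(X)) \equiv 0$ along $X$ gives $g(X, h(X)) = 0$, and along $\sigma(X)$ gives $g(\sigma(X), h(X)) = 0$.

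For the final assertion, fix $U \in \mathfrak{p}_j$ with $j \neq i$. Corollary~\ref{decomort} gives $g(X,U) \equiv g(\sigma(X),U) \equiv 0$. Differentiating $g(\sigma(X),U) \equiv 0$ along $X$ and $g(X,U) \equiv 0$ along $\sigma(X)$ produces the two dual identities
\begin{equation*}
g(h(X), U) = -g(\sigma(X), [X,U]) = g(X, [\sigma(X), U]).
\end{equation*}
Now decompose $[X,U] = [X,U]^+ + [X,U]^-$ by Proposition~\ref{newpr1}: $[X,U]^+ \in \mathfrak{m}_{\lambda_i + \lambda_j}$ and $[X,U]^- \in \mathfrak{m}_{|\lambda_i - \lambda_j|}$. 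Proposition~\ref{newpr2} kills the $g$-pairing of $\sigma(X) \in \mathfrak{m}_{\lambda_i}$ with $\mathfrak{m}_\gamma$ for every positive $\gamma \neq \lambda_i$; it therefore annihilates the $+$-part outright and annihilates the $-$-part whenever $|\lambda_i - \lambda_j| \neq \lambda_i$, i.e.\ whenever $\lambda_j \notin \{\lambda_i, 2\lambda_i\}$.

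The main obstacle is the disposal of the two residual cases $\lambda_j = \lambda_i$ (where $[X,U]^-$ sits in $\mathfrak{m}_0$, which is not governed by Proposition~\ref{newpr2}) and $\lambda_j = 2\lambda_i$ (where $[X,U]^-$ re-enters $\mathfrak{m}_{\lambda_i}$ and one must control its projection onto $\mathfrak{p}_i$). In both cases I would exploit Lemma~\ref{le1} applied to the constant-length field $Z$: for $V, W$ in the same eigenspace $\mathfrak{m}_\lambda$ one obtains $g([W,\sigma(V)], Z) = \lambda\, g(\sigma(V), \sigma(W))$, whose right-hand side vanishes by Corollary~\ref{decomort} when $V \in \mathfrak{p}_i$ and $W \in \mathfrak{p}_j$; this pins the $\mathfrak{m}_0$-component of $[X,\sigma(U)]$ against $Z$. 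Coupled with the symmetrized form $2g(h(X), U) = g(X, [\sigma(X), U]) - g(\sigma(X), [X,U])$ and the $\ad(\mathfrak{m}_0)$-irreducibility of each summand $\mathfrak{p}_k$ (Schur-type cancellation of the surviving equivariant pairing), this should close the argument. As a sanity check, the analogous identity $\langle h(X), U\rangle = 0$ for the $\ad(\mathfrak{g})$-invariant form holds trivially, since $h(X) \in \mathfrak{m}_0$ is $\langle\cdot,\cdot\rangle$-orthogonal to $U \in \mathfrak{m}$ by the very definition of $\mathfrak{m}$.
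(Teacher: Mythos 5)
Your first two assertions are handled correctly and essentially as in the paper: $[X,\sigma(X)]^+=0$ gives $h(X)\in\mathfrak{m}_0$, and the three scalar identities follow by differentiating $g(Z,X)\equiv 0$ and then $g(X,\sigma(X))\equiv 0$ along $X$ and $\sigma(X)$. The gap is in the last assertion, which is the substantive one. Your decomposition $[X,U]=[X,U]^++[X,U]^-$ leaves open exactly the cases $\lambda_j=\lambda_i$ (distinct modules in the same eigenspace, where $[X,U]^-\in\mathfrak{m}_0$) and $\lambda_j=2\lambda_i$ (where $[X,U]^-$ falls back into $\mathfrak{m}_{\lambda_i}$ and may have a component along $\mathfrak{p}_i$), and your plan for these cases is not a proof. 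The identity $g([W,\sigma(V)],Z)=\lambda\,g(\sigma(V),\sigma(W))$ only constrains the pairing of the bracket with $Z$, not with $h(X)$; and the proposed ``Schur-type cancellation'' is not available, because the pointwise form $g_x(\cdot,\cdot)$ on $\mathfrak{g}$ is not $\ad(\mathfrak{m}_0)$-equivariant (it is invariant only under the isotropy algebra at $x$), so $\ad(\mathfrak{m}_0)$-irreducibility of the $\mathfrak{p}_k$ does not force the surviving pairing to vanish. ``This should close the argument'' is a hope, not an argument.

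The missing idea, which is how the paper disposes of all cases at once, is to bring in one more differentiated identity before invoking Proposition \ref{newpr2}. Since $\sigma$ preserves each module $\mathfrak{p}_k$, Corollary \ref{decomort} also gives $g(\sigma(X),\sigma(U))\equiv 0$; differentiating this along $\sigma(X)$ yields $g(\sigma(X),[\sigma(X),\sigma(U)])=0$. Subtracting this zero term from your identity $g(h(X),U)=-g(\sigma(X),[X,U])$ gives
\[
-g(h(X),U)=g\bigl(\sigma(X),[X,U]-[\sigma(X),\sigma(U)]\bigr)=2\,g\bigl(\sigma(X),[X,U]^+\bigr),
\]
and $[X,U]^+\in\mathfrak{m}_{\lambda_i+\lambda_j}$ by Proposition \ref{newpr1}. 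Since $\lambda_i+\lambda_j>\lambda_i$, Proposition \ref{newpr2} kills the right-hand side with no case distinction, so $g(h(X),U)=0$ always; the two residual cases you struggled with simply never arise. The same trick works with your dual identity: differentiating $g(X,\sigma(U))\equiv 0$ along $X$ gives $g(X,[X,\sigma(U)])=0$, whence $g(h(X),U)=2\,g(X,[\sigma(X),U]^+)=0$ for the same reason. With that one extra line your argument becomes complete and coincides with the paper's proof.
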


\begin{proof}
It is easy to see that $[X,\sigma(X)]^+=0$  (see (\ref{plusminus})) and
$h(X)=[X,\sigma(X)]=[X,\sigma(X)]^- \in \mathfrak{m}_0$ for every $X\in \mathfrak{p}_i$ and $i\geq 1$
by proposition \ref{newpr1}. Further, we have $g(Z,X)=g(Z,\sigma(X))=0$ by corollary \ref{orthog}. Hence
\begin{eqnarray*}
0=X\cdot g(Z,X)=g([X,Z],X)=-\alpha\, g(\sigma(X),X),\,\mbox{ where }\alpha \mbox{ is taken from }\,\mathfrak{p}_i\subset \mathfrak{m}_{\alpha},\\
0=X\cdot g(X,\sigma(X))=g(X,h(X)),\,\mbox{ and }\,0=\sigma(X)\cdot g(X,\sigma(X))=-g(h(X),\sigma(X)).
\end{eqnarray*}

Now, we prove the last assertion. Take any $Y\in \mathfrak{p}_j\subset \mathfrak{m}_{\beta}$. By corollary~\ref{decomort},
we have $g(\sigma(X),Y)=g(\sigma(X),\sigma(Y))=0$, therefore,
\begin{eqnarray*}
0&=&X\cdot g(\sigma(X),Y)=g(h(X),Y)+g(\sigma(X),[X,Y]),\\
0&=&\sigma(X)\cdot g(\sigma(X),\sigma(Y))=g(\sigma(X),[\sigma(X),\sigma(Y)]).
\end{eqnarray*}
From this we get
$$
-g(h(X),Y)=g(\sigma(X),[X,Y]\pm [\sigma(X),\sigma(Y)])=g\bigl(\sigma(X),[X,Y]^{\pm}\bigr).
$$
By proposition \ref{newpr1},
we have $[X,Y]^+\in \mathfrak{m}_{\alpha+\beta}$ and $[X,Y]^-\in\mathfrak{m}_{|\alpha-\beta|}$. Since $\alpha+\beta>\alpha$,
we obtain $g(\sigma(X),[X,Y]^{+})=0$ by proposition \ref{newpr2}.
Therefore, $g(h(X),Y)=0$.
\end{proof}
\medskip

\begin{pred}\label{newpr3.5}
Suppose that $i\geq 1$ and $W\in \mathfrak{m}_0$ are such that $\langle W, h(X)\rangle=0$ for all  $X\in \mathfrak{p}_i$.
Then $[W,\mathfrak{p}_i]=0$.
\end{pred}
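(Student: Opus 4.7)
The plan is to use the $\ad(\mathfrak{g})$-invariance of $\langle\cdot,\cdot\rangle$ to re-express the hypothesis as the vanishing of a symmetric bilinear form on $\mathfrak{p}_i$, polarize, and then exploit non-degeneracy together with the fact that $\sigma$ permutes $\mathfrak{p}_i$ bijectively.

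Say $\mathfrak{p}_i\subset\mathfrak{m}_{\lambda_i}$ (reusing the label for brevity). For any $X\in\mathfrak{p}_i$ the $\ad(\mathfrak{g})$-invariance gives
\[
\langle W,h(X)\rangle=\langle W,[X,\sigma(X)]\rangle=\langle [W,X],\sigma(X)\rangle,
\]
so the hypothesis becomes $\langle [W,X],\sigma(X)\rangle=0$ for every $X\in\mathfrak{p}_i$. Note that $[W,\mathfrak{p}_i]\subset\mathfrak{p}_i$ because $\mathfrak{p}_i$ is $\ad(\mathfrak{m}_0)$-invariant, and $\sigma$ preserves $\mathfrak{p}_i$ by Proposition \ref{newpr1}. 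Define the bilinear form
\[
B_W\colon\mathfrak{p}_i\times\mathfrak{p}_i\to\mathbb{R},\qquad B_W(X,Y):=\langle [W,X],\sigma(Y)\rangle.
\]
The hypothesis is exactly $B_W(X,X)=0$ for all $X\in\mathfrak{p}_i$, and I would like to upgrade this to $B_W\equiv 0$ via polarization, which requires symmetry of $B_W$.

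The main technical point is symmetry, and it rests on two ingredients. First, $\sigma$ commutes with $\ad(W)$ on $\mathfrak{p}_i$: since $W\in\mathfrak{m}_0$, i.e.\ $[W,Z]=0$, the Jacobi identity gives
\[
\lambda_i\,\sigma([W,Y])=[Z,[W,Y]]=[W,[Z,Y]]=\lambda_i\,[W,\sigma(Y)].
\]
Second, $\sigma$ is skew-symmetric with respect to $\langle\cdot,\cdot\rangle$, because on $\mathfrak{m}_{\lambda_i}$ it equals $\lambda_i^{-1}\ad(Z)$, and $\ad(Z)$ is skew-symmetric. Combining these, together with $\ad(\mathfrak{g})$-invariance, yields
\[
B_W(X,Y)=\langle[W,X],\sigma(Y)\rangle=-\langle X,[W,\sigma(Y)]\rangle=-\langle X,\sigma([W,Y])\rangle=\langle\sigma(X),[W,Y]\rangle=B_W(Y,X),
\]
so $B_W$ is symmetric. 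Polarization then gives $B_W\equiv 0$.

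Finally, since $\sigma$ is a bijection on $\mathfrak{p}_i$ and $[W,X]\in\mathfrak{p}_i$, the identity $\langle [W,X],\sigma(Y)\rangle=0$ for all $Y\in\mathfrak{p}_i$ together with non-degeneracy of $\langle\cdot,\cdot\rangle$ on $\mathfrak{p}_i$ forces $[W,X]=0$ for every $X\in\mathfrak{p}_i$, i.e.\ $[W,\mathfrak{p}_i]=0$, as required. The only subtle step is verifying symmetry of $B_W$; everything else is bookkeeping with the ad-invariant inner product and the known properties of $\sigma$ established in Proposition \ref{newpr1} and Lemma \ref{newle1}.
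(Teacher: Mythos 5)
Your proof is correct and is essentially the paper's argument in bilinear-form language: your $B_W(X,Y)=\langle [W,X],\sigma(Y)\rangle$ is, up to the factor $\lambda_i^{-1}$, the quadratic form of the paper's symmetric operator $R=\ad(Z)\circ\ad(W)|_{\mathfrak{p}_i}$, your symmetry-plus-polarization step matches the paper's ``symmetric operator with vanishing quadratic form is zero,'' and your use of the bijectivity of $\sigma$ on $\mathfrak{p}_i$ plays the same role as the paper's identity $R^2=-\lambda^2 P^2$ in passing from $R=0$ to $[W,\mathfrak{p}_i]=0$. No gaps; nothing further is needed.
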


\begin{proof}
Consider the operators $P,Q:\mathfrak{p}_i \rightarrow \mathfrak{p}_i$, such that
$P(X)=[W,X]$ and $Q(X)=[Z,X]$. These operators are skew-symmetric with respect to $\langle\cdot,\cdot \rangle$ and commute one with other (since $[Z,W]=0$).
Moreover, $Q^2=-\lambda_k^2 \Id$, where $\mathfrak{p}_i\subset \mathfrak{m}_{\lambda_k}$.
Therefore, the operator $R:=PQ(=QP)$ is symmetric and $R^2=PQ^2P=-\lambda_k^2 P^2$. Now, for any $U\in \mathfrak{p}_i$ we get
\begin{eqnarray*}
0=-\lambda_k\langle W, h(U) \rangle=-\langle W, [U,[Z,U]] \rangle=\langle [U,W],[Z,U]] \rangle=\\
-\langle [W,U],[Z,U]] \rangle=\langle [Z,[W,U]],U \rangle=\langle R(U),U \rangle.
\end{eqnarray*}
Therefore, $R=0$ and $P=0$, i.e. $[W,\mathfrak{p}_i]=0$.
\end{proof}
\medskip

\begin{pred}\label{newpr4}
Let $Z =Z_0+Z_1\in \mathfrak{g}$ be a Killing field of constant length on $(M,g)$, where
$Z_0\in \mathfrak{c}$, $Z_1 \in \mathfrak{g}_1$, $Z_1 \neq 0$, and let $\mathfrak{k}$ be the centralizer of $Z$ {\rm(}and $Z_1${\rm)} in $\mathfrak{g}_1$.
Then one of the following  statements holds:

{\rm 1)} The equality $g(\mathfrak{k},\mathfrak{m})=0$ is fulfilled on $(M,g)$;

{\rm 2)} The Lie algebra pair $(\mathfrak{g}_1, \mathfrak{k})$ is irreducible Hermitian symmetric and
the center of~$\mathfrak{k}$ is a one-dimensional Lie algebra spanned by the vector~$Z_1$.
\end{pred}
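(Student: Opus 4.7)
The proposition is a dichotomy, so my plan is to assume $g(\mathfrak{k},\mathfrak{m})\not\equiv 0$ on $M$ and derive that $(\mathfrak{g}_1,\mathfrak{k})$ is irreducible Hermitian symmetric with $Z(\mathfrak{k})=\mathbb{R}\,Z_1$. First I set up the algebraic picture: since $[Z,\mathfrak{c}]=0$ and $[Z_1,\mathfrak{g}_j]=0$ for $j\neq 1$, one has $\mathfrak{m}=\im L_Z\subset\mathfrak{g}_1$ and $\mathfrak{m}_0\cap\mathfrak{g}_1=\mathfrak{k}$, while $\mathfrak{c}$ and the $\mathfrak{g}_j$ for $j\neq 1$ lie in $\mathfrak{m}_0$ but act trivially on $\mathfrak{m}$. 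Consequently the decomposition $\mathfrak{m}=\mathfrak{p}_1\oplus\cdots\oplus\mathfrak{p}_t$ from (\ref{imagezn}) is the decomposition of $\mathfrak{m}$ into $\ad(\mathfrak{k})$-irreducible summands, and each $\mathfrak{p}_i$ lies in a single eigenspace $\mathfrak{m}_{\lambda_i}$ of $L_Z^2$.

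\textbf{Reduction.} It suffices to show that the hypothesis $g(\mathfrak{k},\mathfrak{m})\not\equiv 0$ forces $t=1$ and the single summand $\mathfrak{m}$ to lie in one eigenspace $\mathfrak{m}_\lambda$. Once these are in hand, proposition~\ref{newpr1} gives $[\mathfrak{m},\mathfrak{m}]^+\in\mathfrak{m}_{2\lambda}=0$ and $[\mathfrak{m},\mathfrak{m}]^-\in\mathfrak{m}_0\cap\mathfrak{g}_1=\mathfrak{k}$, so $[\mathfrak{m},\mathfrak{m}]\subset\mathfrak{k}$ and $(\mathfrak{g}_1,\mathfrak{k})$ becomes an irreducible symmetric pair. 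The operator $\sigma=\lambda^{-1}\ad(Z_1)$ is then a $\mathfrak{k}$-invariant almost complex structure on $\mathfrak{m}$ (since $Z_1\in Z(\mathfrak{k})$), making the pair Hermitian; and because $Z(\mathfrak{k})$ in any irreducible Hermitian symmetric pair is one-dimensional and already contains $Z_1\neq 0$, it must be spanned by $Z_1$.

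\textbf{Core identity for the reduction.} The key tool is the differential equation
\[
Z\cdot g(W,X)=g(W,[Z,X])=\lambda_i\,g(W,\sigma(X)),\qquad W\in\mathfrak{k},\ X\in\mathfrak{p}_i,
\]
coming from $[Z,W]=0$ together with the derivative rule $Y\cdot g(A,B)=g([Y,A],B)+g(A,[Y,B])$ (a consequence of lemma~\ref{lemma3}). Iterating yields $Z^2\cdot g(W,X)=-\lambda_i^2\,g(W,X)$, so $g(W,X)$ restricted to any $Z$-orbit is sinusoidal of frequency $\lambda_i$. Assuming $g(W_0,X_0)\neq 0$ somewhere with $X_0\in\mathfrak{p}_i$, I then differentiate further along Killing fields in $\mathfrak{p}_j$ and in $\mathfrak{k}$, split the resulting brackets via $[\,\cdot\,,\,\cdot\,]^{\pm}$ of proposition~\ref{newpr1} into pieces lying in $\mathfrak{m}_{\lambda_i\pm\lambda_j}$, and combine with the orthogonalities of corollary~\ref{decomort} and propositions~\ref{newpr2},~\ref{newpr3}. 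Proposition~\ref{newpr3.5} finally converts facts about $\langle W,h(\mathfrak{p}_j)\rangle$ into facts about the $\ad(\mathfrak{k})$-action on $\mathfrak{p}_j$.

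\textbf{Main obstacle.} The serious technical point is excluding, under the hypothesis $g(W_0,\mathfrak{p}_i)\not\equiv 0$, both the existence of a second irreducible summand $\mathfrak{p}_j$ ($j\neq i$) and the existence of a second eigenvalue $\lambda_j\neq\lambda_i$. I expect the argument to be by contradiction: pair $W_0$ against brackets $[X,Y]^{\pm}\in\mathfrak{m}_{\lambda_i\pm\lambda_j}$ with $X\in\mathfrak{p}_i$, $Y\in\mathfrak{p}_j$, and exploit the $\ad(\mathfrak{k})$-equivariance of the polarized map $h(X,Y)=[X,\sigma(Y)]+[Y,\sigma(X)]$ together with the sinusoidal behavior along $Z$-orbits to manufacture either two conflicting frequencies on a common $Z$-orbit or a violation of a previously established $g$-orthogonality. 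Simplicity of $\mathfrak{g}_1$ is then invoked through lemma~\ref{vslem1}: the $\ad(\mathfrak{k})$-invariant ideal in $\mathfrak{g}_1$ generated by $\mathfrak{p}_i$ must fill all of $\mathfrak{g}_1$, forcing $\mathfrak{m}=\mathfrak{p}_i$ in a single eigenspace and completing the reduction.
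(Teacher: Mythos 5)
Your setup, your reduction (once $t=1$ and $\mathfrak{m}$ sits in a single eigenspace, the pair is irreducible symmetric, $\sigma$ makes it Hermitian, and $Z_1$ spans the one-dimensional center of $\mathfrak{k}$), and your choice of toolbox (propositions \ref{newpr1}, \ref{newpr2}, \ref{newpr3}, \ref{newpr3.5}, corollary \ref{decomort}, lemma \ref{vslem1}, simplicity of $\mathfrak{g}_1$) are all consistent with the paper. But the heart of the proposition is never proved: you write ``I expect the argument to be by contradiction \dots manufacture either two conflicting frequencies on a common $Z$-orbit or a violation of a previously established $g$-orthogonality,'' which is a plan, not an argument. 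The entire difficulty of the statement is exactly the implication you leave open, namely that $g(\mathfrak{k},\mathfrak{m})\not\equiv 0$ forces the collapse to one $\ad(\mathfrak{k})$-irreducible summand in one eigenspace; nothing in your sketch shows how the ``sinusoidal'' behaviour of $g(W,X)$ along $Z$-orbits would actually produce a contradiction, and indeed that idea plays no role in a correct proof.

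For comparison, the paper does not argue by contradiction from $g(\mathfrak{k},\mathfrak{m})\not\equiv 0$; it splits on the purely algebraic alternative of whether $[\mathfrak{p}_i,\mathfrak{p}_j]=0$ for all $i\neq j$. If yes, lemma \ref{vslem2} plus simplicity of $\mathfrak{g}_1$ gives $t=1$ and $\mathfrak{m}=\mathfrak{m}_{\lambda_1}$, whence conclusion 2) exactly as in your reduction. If some $[\mathfrak{p}_i,\mathfrak{p}_j]\neq 0$, it proves conclusion 1) directly: first $g\bigl(h(U),[\mathfrak{p}_i,\mathfrak{p}_j]\bigr)=0$ for all $i\neq j$ (via proposition \ref{newpr3} and corollary \ref{decomort}); then the span $S$ of all $h(U)$ equals $\mathfrak{k}$, because its orthogonal complement in $\mathfrak{m}_0$ kills $\mathfrak{m}$ by proposition \ref{newpr3.5} and hence, by lemma \ref{vslem1} and simplicity, is $\mathfrak{c}\oplus\mathfrak{g}_2\oplus\cdots\oplus\mathfrak{g}_k$; finally a maximality argument with the largest $\ad(\mathfrak{m}_0)$-invariant subspace $r_1\subset\mathfrak{m}$ satisfying $g(\mathfrak{k},r_1)=0$ (it contains all $[\mathfrak{p}_i,\mathfrak{p}_j]$, and its complement $r_2$ is forced to vanish by another application of propositions \ref{newpr3}, \ref{newpr3.5} and lemma \ref{vslem1}) yields $g(\mathfrak{k},\mathfrak{m})=0$ on $(M,g)$. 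None of these concrete steps appears in your proposal, so the key implication remains a genuine gap.
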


\begin{proof}
We apply all above constructions of this section to prove this key proposition. Note that
\begin{equation}\label{decomtwo}
\mathfrak{m}=\mathfrak{p}_1\oplus\cdots\oplus\mathfrak{p}_t\subset \mathfrak{g}_1\quad \mbox{and} \quad
\mathfrak{m}_0=\mathfrak{c}\oplus\mathfrak{k}\oplus\mathfrak{g}_2\oplus\mathfrak{g}_3\oplus \cdots \oplus\mathfrak{g}_k
\end{equation}
in our case.
\medskip

First, let us suppose that $[\mathfrak{p}_i,\mathfrak{p}_j]=0$ for all $i\neq j$, $1\leq i,j \leq s$.
As we have noted, the first decomposition in (\ref{decomtwo}) and the decomposition $\mathfrak{g}_1=\mathfrak{k}\oplus \mathfrak{m}$
are orthogonal with respect to  $\langle\cdot,\cdot \rangle$.
Now, by lemma \ref{vslem2} we get that every $\mathfrak{p}_i+[\mathfrak{p}_i,\mathfrak{p}_i]$, $1\leq i \leq s$, is an ideal in $\mathfrak{g}_1$.
Since $\mathfrak{g}_1$ is a simple Lie algebra, we get that $s=1$, and $\mathfrak{p}_1=\mathfrak{m}_{\lambda_1}$.

For $X,Y \in \mathfrak{p}_1$, proposition \ref{newpr1} implies
$[X,Y]^+=1/2\bigl([U,V]-[\sigma(U),\sigma(V)] \bigr)\in \mathfrak{m}_{2\lambda_1}$, hence $[X,Y]^+=0$ (recall that $s=1$ and $\lambda_1<2\lambda_1$) and
$[X,Y]=[X,Y]^-=1/2\bigl([X,Y]+[\sigma(X),\sigma(Y)] \bigr)\in \mathfrak{m}_{0}$. This means that $[\mathfrak{p}_1,\mathfrak{p}_1]\subset\mathfrak{k}$.
Therefore, the Lie algebra pair $(\mathfrak{g}_1, \mathfrak{k})$ is irreducible symmetric.
But $\mathfrak{k}$ is the centralizer of the vector $Z_1$ in $\mathfrak{g}_1$, therefore, it has non-zero center. Consequently,
$(\mathfrak{g}_1, \mathfrak{k})$ is an Hermitian irreducible symmetric pair (see e.g. proposition 8.7.12 in \cite{WolfBook}).
\medskip

In the rest of the proof we will assume that there are  indices $i\neq j$, $1\leq i,j \leq s$, such that $[\mathfrak{p}_i,\mathfrak{p}_j]\neq 0$.
\medskip

Let $S$ be a linear span of all vectors $h(U)$ (see (\ref{hvector})), where $U\in \mathfrak{p}_i$, $i \geq 1$.
In our case $\mathfrak{p}_i\subset \mathfrak{g}_1$, hence $h(U)\in \mathfrak{g}_1$ and $S \subset \mathfrak{g}_1$.
We prove that the equality $g(S,[\mathfrak{p}_i,\mathfrak{p}_j])=0$ holds on $(M,g)$ for all $i,j\geq 1$, $i\neq j$.

Take any $h(U)$, where $U\in \mathfrak{p}_k$, $k \geq 1$. Without loss of generality we may suppose that $k\neq j$.
Then $g(h(U),\mathfrak{p}_j)=0$ by proposition \ref{newpr3}. Now, for every $X\in \mathfrak{p}_i$ and $Y\in \mathfrak{p}_j$ we have
$0=X\cdot g(h(U),Y)=g([X,h(U)],Y)+g(h(U), [X,Y])$. Since $[h(U),X]\in [\mathfrak{m}_0, \mathfrak{p}_i]\subset \mathfrak{p}_i$ we have
$g([X,h(U)],Y)=0$ by corollary \ref{decomort}. Hence, $g(h(U), [X,Y])=0$ that implies $g(h(U),[\mathfrak{p}_i,\mathfrak{p}_j])=0$ and
$g(S,[\mathfrak{p}_i,\mathfrak{p}_j])=0$.
\medskip

Let $S^{\bot}$ be the orthogonal complement to the linear space $S$ in $\mathfrak{m}_0$
with respect to $\langle\cdot,\cdot \rangle$.  By proposition \ref{newpr3.5}  we have  $[W,\mathfrak{p}_i]=0$ for all $W \in S^{\bot}$ and all $i$,
i.e. $[S^{\bot},\mathfrak{m}]=0$.
Let $q_1$ be a maximal (by inclusion) linear subspace in  $\mathfrak{m}_0$ with the property $[q_1,\mathfrak{m}]=0$.
We get that $q_1$ is an ideal in $\mathfrak{g}$ by lemma \ref{vslem1}.

Note also that $\mathfrak{c}\oplus\mathfrak{g}_2\oplus\mathfrak{g}_3\oplus \cdots \oplus\mathfrak{g}_k\subset S^{\bot}\subset q_1$. Since the Lie algebra
$\mathfrak{g}_1$ is simple we get
$q_1=S^{\bot}=\mathfrak{c}\oplus\mathfrak{g}_2\oplus\mathfrak{g}_3\oplus \cdots \oplus\mathfrak{g}_k$, therefore, $S=\mathfrak{k}$.
Consequently, we get that $g(\mathfrak{k},[\mathfrak{p}_i,\mathfrak{p}_j])=0$ for every $i\neq j$, $1\leq i,j \leq t$.
\medskip

Since
$[\mathfrak{m}_0,[\mathfrak{p}_i,\mathfrak{p}_j]]\subset[[\mathfrak{m}_0,\mathfrak{p}_i],\mathfrak{p}_j]+
[\mathfrak{p}_i,[\mathfrak{m}_0,\mathfrak{p}_j]]\subset[\mathfrak{p}_i,\mathfrak{p}_j]$,
then any subspace of the type $[\mathfrak{p}_i,\mathfrak{p}_j]\subset \mathfrak{m}$ is $\ad(\mathfrak{m}_0)$-invariant.

Let $r_1$ be a maximal (by inclusion) $\ad(\mathfrak{m}_0)$-invariant linear subspace in  $\mathfrak{m}$ with the property
$g(\mathfrak{k},r_1)=0$. Clearly, $[\mathfrak{p}_i,\mathfrak{p}_j]\subset r_1$ for any $i\neq j$. We are going to prove that $r_1=\mathfrak{m}$.

Denote by $r_2$ the $\langle \cdot,\cdot \rangle$-orthogonal complement to $r_1$ in $\mathfrak{m}$.
Since both $r_1$ and $r_2$ are spanned by some modules $\mathfrak{p}_j$, we get that $[r_1,r_2]\subset r_1$, because
$[\mathfrak{p}_i,\mathfrak{p}_j]\subset r_1$ for any $i\neq j$. Since $\langle [r_2,r_2],r_1 \rangle = -\langle r_2, [r_2,r_1] \rangle=0$,
then $\mathfrak{k}\oplus r_2$ is a Lie subalgebra of $\mathfrak{g}_1$.
\medskip

Let $S_1$ be a linear span of all vectors $h(U)$ (see (\ref{hvector})), where $U\in \mathfrak{p}_i$ and $\mathfrak{p}_i \subset r_1$.
We know that $h(U)\in \mathfrak{k}$.

Let $S_1^{\bot}$ be an orthogonal complement to the linear space $S_1$ in $\mathfrak{k}$
with respect to $\langle\cdot,\cdot \rangle$.  By proposition \ref{newpr3.5}  we have
$[W,\mathfrak{p}_i]=0$ for all $W \in S^{\bot}_1$ and $r_1 \subset \mathfrak{p}_1$,
i.e. $[S_1^{\bot},r_1]=0$.
Let $q_2$ be a maximal (by inclusion) linear subspace in  $\mathfrak{k}\oplus r_2$ with the property $[q_2, r_1]=0$.
Therefore, $q_2$ is an ideal in $\mathfrak{g}_1$  by lemma \ref{vslem1}. Since $\mathfrak{g}_1$ is a simple Lie algebra, we get $q_2=0$.
Since $S_1^{\bot} \subset q_2$, we get $S_1=\mathfrak{k}$.

For all $i\geq 1$ and all $U\in \mathfrak{p}_i$ with $\mathfrak{p}_i \subset r_1$,
and for all $\mathfrak{p}_j\subset r_2$, we have the equality $g(\mathfrak{p}_j,h(U))=0$ on $(M,g)$ by proposition \ref{newpr3}.
Therefore, $g(\mathfrak{p}_j,S_1)=g(\mathfrak{p}_j,\mathfrak{k})=0$ for any $\mathfrak{p}_j \subset r_2$. Hence,
$g(r_2,\mathfrak{k})=0$ and $r_2=0$ by the definition of $r_1$. Consequently, we get the equality $g(\mathfrak{k},\mathfrak{m})=0$ on $(M,g)$.
\end{proof}

\bigskip

\begin{pred}\label{newpr5}
If the case {\rm 1)} of proposition \ref{newpr4} holds, then the every Killing field $Y \in \mathfrak{g}_1$ {\rm(}in particular, $Z_1${\rm)}
has constant length on $(M,g)$.
\end{pred}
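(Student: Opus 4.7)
The plan is first to prove that $g(A,B)$ is constant on $M$ for every $A,B\in\mathfrak{k}$ (which in particular gives constant length for all $Y\in\mathfrak{k}$), and then to propagate this to all of $\mathfrak{g}_1$ by conjugation via Lemma~\ref{s1}.

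Fix $A,B\in\mathfrak{k}$. For any $V\in\mathfrak{m}$, the $\ad(\mathfrak{k})$-invariance of $\mathfrak{m}$ gives $[V,A],[V,B]\in\mathfrak{m}$, so the case~1) hypothesis $g(\mathfrak{k},\mathfrak{m})=0$ yields $V\cdot g(A,B)=g([V,A],B)+g(A,[V,B])=0$. Derivatives in directions from $\mathfrak{c}$ or from $\mathfrak{g}_i$ ($i\geq 2$) vanish trivially since these summands commute with $\mathfrak{k}$. Only the $\mathfrak{k}$-directional derivatives remain, and this is the hard part of the argument.

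To handle the $\mathfrak{k}$-directions I would take second derivatives. For any $U,V\in\mathfrak{m}$, the first-order identity just shown implies $UV\cdot g(A,B)=VU\cdot g(A,B)=0$, hence $[U,V]\cdot g(A,B)=0$ on $M$. Writing $[U,V]=W_\mathfrak{k}+W_\mathfrak{m}$ in the decomposition $\mathfrak{g}_1=\mathfrak{k}\oplus\mathfrak{m}$ and using $W_\mathfrak{m}\cdot g(A,B)=0$ (from the $\mathfrak{m}$-step), we conclude $W_\mathfrak{k}\cdot g(A,B)=0$. The structural input from the proof of Proposition~\ref{newpr4} case~1) is now crucial: it is shown there that $\mathfrak{k}$ is linearly spanned by the elements $h(X)=[X,\sigma(X)]$ for $X\in\mathfrak{p}_i$, and each such $h(X)$ lies in $[\mathfrak{m},\mathfrak{m}]\cap\mathfrak{k}$ (its ``plus'' part vanishes, as recorded in the proof of Proposition~\ref{newpr3}). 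Thus $\mathfrak{k}\subset[\mathfrak{m},\mathfrak{m}]$, so every $W\in\mathfrak{k}$ is the $\mathfrak{k}$-projection of some bracket from $[\mathfrak{m},\mathfrak{m}]$ and therefore annihilates $g(A,B)$. Combining all directions and using transitivity of $G$ on connected $M$, we obtain $g(A,B)=\const$ on $M$; specializing to $A=B=Y$ gives constant length for every $Y\in\mathfrak{k}$.

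For the final step, since $\mathfrak{k}$ is the centralizer of $Z_1$ in $\mathfrak{g}_1$, it contains any maximal torus of $\mathfrak{g}_1$ through $Z_1$, and hence has maximal rank. Every $Y\in\mathfrak{g}_1$ is thus $\Ad(G_1)$-conjugate to an element of $\mathfrak{k}$, so by Lemma~\ref{s1} (inner automorphisms preserve the property of having constant length) the field $Y$ itself has constant length on $(M,g)$. The main obstacle in this plan is the passage from vanishing of $\mathfrak{m}$-directional derivatives of $g(A,B)$ to vanishing of $\mathfrak{k}$-directional ones; it is resolved by taking a second derivative and then invoking the explicit presentation of $\mathfrak{k}$ as a subspace of $[\mathfrak{m},\mathfrak{m}]$ established in the proof of Proposition~\ref{newpr4}.
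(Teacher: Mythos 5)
Your argument is correct, but it takes a genuinely different route from the paper's. You establish that $g(A,B)$ is constant on $M$ for all $A,B\in\mathfrak{k}$ by differentiating: first-order derivatives in $\mathfrak{m}$-directions vanish because $[\mathfrak{k},\mathfrak{m}]\subset\mathfrak{m}$ and the case~1) hypothesis $g(\mathfrak{k},\mathfrak{m})=0$, derivatives in directions from $\mathfrak{c}\oplus\mathfrak{g}_2\oplus\cdots\oplus\mathfrak{g}_k$ vanish trivially, and the missing $\mathfrak{k}$-directions are recovered from commutators of $\mathfrak{m}$-directions; since $\mathfrak{k}$ has maximal rank, conjugating an arbitrary $Y\in\mathfrak{g}_1$ into $\mathfrak{k}$ and invoking Lemma~\ref{s1} finishes the proof. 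The paper instead fixes a Cartan subalgebra $\mathfrak{t}\ni Z_1$ and the root space decomposition (\ref{rsd}), moves $Z_1$ by the Weyl group, uses Lemma~\ref{s1} to get $g(\mathfrak{t},\psi_i(\mathfrak{m}))=0$ for every Weyl image $\psi_i(Z_1)$, and deduces from the irreducibility of the Weyl action that $g(\mathfrak{t},\mathfrak{v}_{\alpha})=0$ for every positive root $\alpha$, so that every $X\in\mathfrak{t}$ has constant length by Proposition~\ref{pre1}; the final conjugation step is the same. Your route avoids the root and Weyl-group machinery entirely and is more elementary, while the paper's route produces along the way the explicit relations $g(\mathfrak{t},\mathfrak{v}_{\alpha})=0$, which are of the same kind as those exploited later (compare Proposition~\ref{newprort}).

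Two small repairs. First, you justify the key structural fact by citing from the proof of Proposition~\ref{newpr4} that $\mathfrak{k}$ is spanned by the vectors $h(X)$; strictly speaking the identity $S=\mathfrak{k}$ is derived there only in the branch where some $[\mathfrak{p}_i,\mathfrak{p}_j]\neq 0$, whereas your hypothesis is only that the conclusion~1) holds (the derivation, via Proposition~\ref{newpr3.5} and simplicity, does work in general, but it is not stated under your hypothesis). It is cleaner, and sufficient for your second-derivative step, to note that $\mathfrak{m}+[\mathfrak{m},\mathfrak{m}]$ is an ideal of the simple algebra $\mathfrak{g}_1$ by Lemma~\ref{vslem1}, hence equals $\mathfrak{g}_1$, so the $\mathfrak{k}$-components of brackets of elements of $\mathfrak{m}$ span $\mathfrak{k}$. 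Second, before applying Lemma~\ref{s1} you should extend the inner automorphism of $\mathfrak{g}_1$ to an inner automorphism of $\mathfrak{g}$ acting identically on the remaining summands, as the paper does; this is routine but worth stating.
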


\begin{proof}
According to the case {\rm 1)} of proposition \ref{newpr4}, we get that $g(\mathfrak{k}, \mathfrak{m})=0$ at every point of $M$, where
$\mathfrak{k}$ is the centralizer of $Z_1$ in $\mathfrak{g}_1$ and $\mathfrak{m}$ is
the $\ad(\mathfrak{k})$-invariant complement to $\mathfrak{k}$ in $\mathfrak{g}_1$.

Let us consider any Cartan subalgebra $\mathfrak{t}$ in $\mathfrak{g}_1$ such that $Z_1\in \mathfrak{t}$.
First, prove that every $X\in \mathfrak{t}$ has constant length on $(M,g)$.
By proposition \ref{pre1}, it suffices to prove that $g(\mathfrak{t},\mathfrak{v}_{\alpha})=0$ for all $\alpha \in \Delta^+$ in the decomposition
(\ref{rsd}), where $\mathfrak{g}:=\mathfrak{g}_1$. Indeed, take  any $X\in \mathfrak{t}$.
Note that $g([X,[X,Y]],X)=0$ for all $Y\in \mathfrak{t}$, because $[X,Y]=0$.
Moreover, $g([X,[X,Y]],X)=0$ is easily satisfied if $Y$ is from other factors of $\mathfrak{g}$.
If $Y \in \mathfrak{v}_{\alpha}$ for $\alpha \in \Delta^+$, then
$g([X,[X,Y]],X)=-\langle X,\alpha_i\rangle^2 g(X,Y)$ by (\ref{N}).
Therefore, if $g(X,\mathfrak{v}_{\alpha})=0$ for all $\alpha \in \Delta^+$, then $g([X,[X,Y]],X)=0$ for every $Y\in \mathfrak{g}$.
By proposition \ref{pre1} it implies that $X$ has constant length on $(M,g)$.

We have the equality $g(\mathfrak{t}, \mathfrak{m})=0$.
Let us consider the orbit $\{Z_1=U_1,U_2, \dots, U_l \}$ of $Z_1$
in $\mathfrak{t}$ under the action of the Weyl group $W$, see section \ref{rsdsec}.
By properties of $W$, for any  $U_i$ there is an inner automorphism $\psi_i$ of $\mathfrak{g}_1$ such that $\mathfrak{t}$ is stable under $\psi_i$
and $U_i=\psi_i(Z_1)$. Note that  we may consider $\psi_i$ as an automorphism of
$\mathfrak{g}=\mathfrak{c}\oplus\mathfrak{g}_1\oplus\cdots\oplus \mathfrak{g}_k$ (it acts identically on all other summands).
By lemma \ref{s1}, the Killing field $Z_0+U_i=Z_0+\psi_i(Z_1)=\psi_i(Z_0+Z_1)$ also has constant length.
Moreover, $g\bigl(\psi_i(\mathfrak{t}),\psi_i(\mathfrak{m})\bigr)=g\bigl(\mathfrak{t},\psi_i(\mathfrak{m})\bigr)=0$ on $(M,g)$ by the same lemma.
Now, it suffices to prove that for every $\alpha \in \Delta^+$, there is $i$ such that $\mathfrak{v}_{\alpha}\subset \psi_i(\mathfrak{m})$.
The latter is equivalent to the following: $\psi_i(Z_1)=U_i$ is not orthogonal to the root $\alpha$  with respect to $\langle\cdot,\cdot \rangle$, because
$\psi_i(\mathfrak{k})$ is the centralizer of $U_i=\psi_i(Z_1)$ in $\mathfrak{g}_1$ and $\psi_i(\mathfrak{m})$ consists of all root spaces with
$\langle \alpha,U_i\rangle\neq 0$.

Suppose that there is no suitable $\psi_i(Z_1)=U_i$, i.e. $\langle \alpha,U_i\rangle=0$ for all $i$. But $W$ acts irreducibly on $\mathfrak{t}$,
therefore $\Lin (Z_1=U_1,U_2, \dots, U_l)=\mathfrak{t}$. This contradiction proves $g(\mathfrak{t},\mathfrak{v}_{\alpha})=0$, therefore
every $X\in \mathfrak{t}$ has constant length on $(M,g)$.

Now, for an arbitrary $X\in \mathfrak{g}_1$, there is an inner automorphism $\eta$ of $\mathfrak{g}_1$ such that $\eta(X)\in \mathfrak{t}$, see
section \ref{rsdsec}. Then using lemma \ref{s1} again, we obtain that $X$ has constant length on $(M,g)$.
\end{proof}
\medskip

We will finish the proof of theorem \ref{theorem2} in the next section.
But now we are going to prove theorem \ref{theorem3}.

\begin{lemma}\label{biinl}
Let $G$ be compact Lie group {\rm(}with the Lie algebra $\mathfrak{g}${\rm)} supplied with a left-invariant Riemannian metric $\mu$.
If every $X\in \mathfrak{g}$ has constant length on the Riemannian manifold $(G,\mu)$, then $\mu$ is a bi-invariant metric.
\end{lemma}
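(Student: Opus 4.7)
The plan is to reduce the hypothesis \emph{``every $X\in\mathfrak{g}$ has constant length on $(G,\mu)$''} to the statement that the inner product $\langle\cdot,\cdot\rangle_e:=\mu_e$ on $\mathfrak{g}=T_eG$ is $\Ad(G)$-invariant, and then to invoke the standard fact that a left-invariant metric whose value at $e$ is $\Ad(G)$-invariant is bi-invariant.

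First I would make the identification \eqref{kfaction} explicit in this setting. Since $\mu$ is left-invariant, the group $G$ acts on $(G,\mu)$ by isometries via left translations $L_a(x)=ax$, and for $U\in\mathfrak{g}$ the associated Killing field is
$$
\widetilde{U}(a)=\left.\frac{d}{dt}\bigl(\exp(tU)\cdot a\bigr)\right|_{t=0}=dR_a(U),
$$
i.e.\ the right-invariant vector field with value $U$ at $e$. Next I would use left-invariance of $\mu$ together with the identity $\Ad(a^{-1})=dL_{a^{-1}}\circ dR_a\bigr|_e$ (which follows from $I_{a^{-1}}=L_{a^{-1}}R_a$) to compute
$$
|\widetilde{U}(a)|^{2}_{\mu}=\mu_a(dR_aU,dR_aU)=\mu_e\bigl(\Ad(a^{-1})U,\Ad(a^{-1})U\bigr)=\bigl\langle\Ad(a^{-1})U,\Ad(a^{-1})U\bigr\rangle_e.
$$

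Now the hypothesis $|\widetilde{U}|_{\mu}\equiv|\widetilde{U}(e)|_{\mu}=\langle U,U\rangle_e^{1/2}$ for every $U\in\mathfrak{g}$ gives $\langle\Ad(a^{-1})U,\Ad(a^{-1})U\rangle_e=\langle U,U\rangle_e$ for all $a\in G$ and all $U\in\mathfrak{g}$; by polarization this is precisely the $\Ad(G)$-invariance of $\langle\cdot,\cdot\rangle_e$. Finally I would conclude bi-invariance: for any $a,b\in G$ and $v,w\in T_aG$, writing $v=dL_a X$, $w=dL_a Y$ with $X,Y\in\mathfrak{g}$ and using the commutation $dL_a\circ dR_b=dR_b\circ dL_a$, a direct computation together with the $\Ad(G)$-invariance just established shows $\mu_{ab}(dR_bv,dR_bw)=\mu_a(v,w)$, so $\mu$ is also right-invariant.

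There is no real obstacle here; the only technical point is to get the identification $\widetilde{U}(a)=dR_a(U)$ correct (a common source of sign/side confusion in this circle of ideas) and to track carefully the passage from constant length of every $\widetilde{U}$ to $\Ad(G)$-invariance of $\mu_e$. Everything else is a routine translation between invariance properties of $\mu$ on $G$ and invariance properties of the inner product on $\mathfrak{g}$.
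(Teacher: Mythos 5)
Your proposal is correct and follows essentially the same route as the paper: both reduce the hypothesis to the $\Ad(G)$-invariance of $\mu_e$ and then conclude bi-invariance, the only difference being that you unwind the equivariance computation $\mu_e(U,V)=\mu_a\bigl(\widetilde{\Ad(a)U},\widetilde{\Ad(a)V}\bigr)$ by hand via $\widetilde{U}(a)=dR_a(U)$ and $\Ad(a^{-1})=dL_{a^{-1}}\circ dR_a$, whereas the paper simply invokes its Lemma \ref{s1} together with polarization of the constant-length hypothesis.
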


\begin{proof}
Since for every Killing field $X,Y \in \mathfrak{g}$
we have
$$
2\mu(X,Y)=\mu(X+Y,X+Y)-\mu(X,X)-\mu(Y,Y)=\const,
$$
then for any $a \in G$
$$
\mu_e(\Ad(a)(X),\Ad(a)(Y))=\mu_a(\Ad(a)(X),\Ad(a)(Y))=\mu_e(X, Y)
$$
by lemma \ref{s1}, where $e$ is the unit in $G$.
Therefore, $\mu$ is a bi-invariant metric on $G$.
\end{proof}
\medskip

\begin{proof}[Proof of theorem \ref{theorem3}]Since $Z$ is regular, then $k=l$ in theorem \ref{theorem1} and every $Z_i$ is regular in $\mathfrak{g}_i$.
By theorem \ref{theorem1} we have $g(\mathfrak{g}_i,\mathfrak{g}_j)=0$ for every $i\neq j$, $1\leq i,j \leq k$, and all $Z_0+Z_i$ have constant length on
$(M,g)$.
Since $Z_i$ is regular in $\mathfrak{g}_i$, we get the case 1) in proposition \ref{newpr4} and, therefore,
all $X\in \mathfrak{g}_i$ has constant length on $(M,g)$ by proposition \ref{newpr5} (here we have considered $\mathfrak{g}_i$ instead of $\mathfrak{g}_1$).
Therefore, every $X\in \mathfrak{g}_{\mathfrak{s}}$,
where $\mathfrak{g}_{\mathfrak{s}}=\mathfrak{g}_1\oplus\cdots\oplus\mathfrak{g}_k$
is a semisimple part of $\mathfrak{g}$, also has constant length.
In particular, $Z_{\mathfrak{s}}:=Z-Z_0$ has constant length.
By proposition \ref{pre1} we get $g(Z,Y)=g(Z_{\mathfrak{s}},Y)=0$ for every $Y\in \mathfrak{m}$, hence,
$g(Z_0, Y)=0$ for the same $Y$. Now, for any $X\in \mathfrak{m}$ we have $0=X\cdot g(Z_0,Y)=g(Z_0,[X,Y])$. Note that
$\mathfrak{m}+[\mathfrak{m},\mathfrak{m}]$ is an ideal in $\mathfrak{g}_{\mathfrak{s}}$ by lemma \ref{vslem1},
hence, $\mathfrak{m}+[\mathfrak{m},\mathfrak{m}]=\mathfrak{g}_{\mathfrak{s}}$ and $g(Z_0,\mathfrak{g}_{\mathfrak{s}})=0$.

Now, suppose that $M$ is simply connected. In this case semisimple part of the isometry group $G$ acts transitively on $(M,g)$
(see e.g. proposition 9 on p.~94 in \cite{On1994}).
This mean that the Lie algebra $\mathfrak{g}_{\mathfrak{s}}$ generates a tangent space to $M$ at every point $x\in M$.
Since $g(Z_0,\mathfrak{g}_{\mathfrak{s}})=0$ on $M$, we get $Z_0=0$.

Further, $(M,g)$ is a direct metric product of Riemannian manifold $(M_i,g_i)$, that are corresponded to the distributions $\mathfrak{g}_i$,
by corollary \ref{cortheo1}.
Therefore, $G_i$ is an isometry group of $(M_i,g_i)$.
Since all $X\in \mathfrak{g}_i$ have constant length, the isotropy group (of any point of $(M_i,g_i)$) is discrete.
Since $M_i$ is simply connected, $G_i$ acts simply transitively on $(M_i,g_i)$, hence $(M_i,g_i)$ is isometric to
$G_i$ supplied with some left-invariant Riemannian metric $\mu_i$.
Since all $X\in \mathfrak{g}_i$ have constant length on $(G_i,\mu_i)$, then $\mu_i$ is bi-invariant
by lemma~\ref{biinl}.
\end{proof}

\section{More detailed study of the Hermitian case}\label{shcsec}

In this section we study in more details the case 2) in proposition \ref{newpr4}, which we call Hermitian.
In this case, $\mathfrak{m}$ is $\ad(\mathfrak{k})$-irreducible, in particular $L_Z^2|_{\mathfrak{m}} = -C\cdot \Id$, where $C$ is a positive constant.
Multiply $Z$ by a suitable constant we may suppose that $C=1$.
Using the root space decomposition (\ref{rsd}) for $\mathfrak{g}_1$ we get
\begin{equation}\label{rsddet}
\mathfrak{m}=\bigoplus_{\alpha \in I_1(Z)} \mathfrak{v}_{\alpha},\qquad
\mathfrak{k}=\mathfrak{t}\oplus \bigoplus_{\alpha \in I_2(Z)} \mathfrak{v}_{\alpha},
\end{equation}
where
\begin{equation}\label{i1i2}
I_1(Z)=\{\alpha \in \Delta^+\,|\, \langle Z,\alpha \rangle \neq 0\}, \qquad
I_2(Z)=\{\alpha \in \Delta^+\,|\, \langle Z,\alpha \rangle  =0\}.
\end{equation}
Without loss of generality we may assume that $Z$ is in the Cartan chamber $C(\Delta^+)$ (see (\ref{carcham})),
hence $\langle Z, \alpha\rangle >0$ for all $\alpha\in I_1(Z)$. By (\ref{N}) we have
$$
[Z,U_{\alpha}]=\langle Z,\alpha \rangle V_{\alpha},\quad
[Z,V_{\alpha}]=-\langle Z,\alpha \rangle U_{\alpha}, \quad
L_Z^2|_{\mathfrak{v}_{\alpha}}=-\langle Z,\alpha \rangle^2 \cdot \Id
$$
for all $\alpha \in I_1(Z)$.
Then, $\langle Z,\alpha \rangle=1$ for all $\alpha \in I_1(Z)$ by our assumptions.
In particular, $L_Z |_{\mathfrak{m}}=\sigma$, see (\ref{soprya}).
By $W=W(\mathfrak{t})$ we denote the corresponding Weyl group of $\mathfrak{g}_1$.

\begin{remark}\label{ncrem}
We can explain the connection of the Hermitian case with Hermitian symmetric spaces as follows.
Let $\alpha_{\max}=\sum_i a_i \pi_i$ be a maximal root for $\Delta^+$.
Then $\langle Z,\alpha_{\max}\rangle =\sum_i a_i \langle Z, \pi_i \rangle$, where all $a_i$
are integer with $a_i\geq 1$. Note that $\langle Z, \pi_i \rangle=0$ for $\pi_i\in I_2(Z)$ and
$\langle Z, \pi_i \rangle=1$ for $\pi_i\in I_1(Z)$. There is at least one $\pi_j$ in $I_1(Z)$, hence $\langle Z,\alpha_{\max}\rangle=1$,
and such $\pi_j$ should be unique, moreover, $a_j=1$.
Therefore, $\pi_j$ is a non-compact root and $\pi_i \in I_2(Z)$ for all $i\neq j$. Every non-compact root determines a Hermitian symmetric space as it was shown
in \cite{Wolf64}.
\end{remark}

\begin{lemma}\label{pr1.1}
In the above notations, for any $w\in W$ the Killing field
$Z_0+w(Z_1)$ has constant length on $(M,g)$.
\end{lemma}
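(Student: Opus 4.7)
The plan is to realize the Weyl group action on $\mathfrak{t}$ by an inner automorphism of the ambient algebra $\mathfrak{g}$ and then invoke Lemma~\ref{s1}.

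First, by property (v) of the Weyl group recalled in Section~\ref{rsdsec}, for any $w\in W=W(\mathfrak{t})$ there exists an inner automorphism $\eta$ of the simple Lie algebra $\mathfrak{g}_1$ such that $\mathfrak{t}$ is $\eta$-stable and $\eta|_{\mathfrak{t}}=w$. Concretely, one can write $\eta=\Ad(a)$ for some $a$ in the connected subgroup $G_1\subset G$ with Lie algebra $\mathfrak{g}_1$.

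Next, I extend $\eta$ to an inner automorphism $\widehat{\eta}$ of the whole Lie algebra $\mathfrak{g}$ by declaring it to act as the identity on the complementary summand $\mathfrak{c}\oplus\mathfrak{g}_2\oplus\cdots\oplus\mathfrak{g}_k$ in the decomposition (\ref{str1}). This is legitimate because the same element $a\in G_1\subset G$, viewed as an element of $G$, commutes with $\mathfrak{c}$ and with every $\mathfrak{g}_j$ for $j\neq 1$, so $\widehat{\eta}=\Ad(a)$ on all of $\mathfrak{g}$. In particular $\widehat{\eta}$ is inner.

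Then, since $Z_0\in\mathfrak{c}$ and $Z_1\in\mathfrak{t}\subset\mathfrak{g}_1$, I compute
\[
\widehat{\eta}(Z)=\widehat{\eta}(Z_0)+\widehat{\eta}(Z_1)=Z_0+w(Z_1).
\]
By the hypothesis of the section, $Z=Z_0+Z_1$ is a Killing field of constant length on $(M,g)$, hence Lemma~\ref{s1} applied to the inner automorphism $\widehat{\eta}$ of $\mathfrak{g}$ yields that $\widehat{\eta}(Z)=Z_0+w(Z_1)$ also has constant length on $(M,g)$. There is no substantial obstacle here: the entire argument is a routine packaging of property (v) of $W$ together with Lemma~\ref{s1}, and the only point worth double-checking is that an inner automorphism of the ideal $\mathfrak{g}_1$ extends to an inner automorphism of $\mathfrak{g}$ fixing $Z_0$, which is immediate from the direct-sum structure (\ref{str1}).
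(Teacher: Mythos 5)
Your proposal is correct and follows essentially the same route as the paper: realize $w$ by an inner automorphism of $\mathfrak{g}_1$ (property (v) of the Weyl group), extend it to an inner automorphism of $\mathfrak{g}$ acting identically on $\mathfrak{c}\oplus\mathfrak{g}_2\oplus\cdots\oplus\mathfrak{g}_k$ so that it fixes $Z_0$, and apply Lemma~\ref{s1} to $Z=Z_0+Z_1$. Your extra remark justifying that the extension is still inner (via $\Ad(a)$ for $a$ in the subgroup with Lie algebra $\mathfrak{g}_1$) is a slightly more explicit version of the paper's parenthetical statement.
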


\begin{proof}
By properties of $W$ (see section \ref{rsdsec}), there is an inner automorphism $\psi$ of $\mathfrak{g}_1$ such that $\mathfrak{t}$ is stable under $\psi$
and $w(Z_1)=\psi(Z_1)$. We may consider $\psi$ as an automorphism of
$\mathfrak{g}=\mathfrak{c}\oplus\mathfrak{g}_1\oplus\cdots\oplus \mathfrak{g}_k$ (it acts identically on all other summands).
By lemma \ref{s1}, the Killing field $Z_0+w(Z_1)=Z_0+\psi(Z_1)=\psi_i(Z_0+Z_1)$ also has constant length.
\end{proof}

\begin{pred}\label{pr3}
In the notation as above, for any $\alpha \in I_1(Z)$, the following equalities hold:
\begin{eqnarray*}
g(Z, \mathfrak{v}_{\alpha})=0, \quad
g(U_{\alpha},U_{\alpha})=g(V_{\alpha},V_{\alpha})=g(\alpha,Z)=\frac{1}{\langle \alpha, \alpha\rangle} \,g(\alpha,\alpha),\\
g(U_{\alpha},V_{\alpha})=g(\alpha,U_{\alpha})=g(\alpha,V_{\alpha})=0,\quad g(\alpha,\alpha)=\langle \alpha, \alpha\rangle \, g(\alpha,Z).
\end{eqnarray*}
\end{pred}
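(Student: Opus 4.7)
The orthogonality $g(Z,U_\alpha)=g(Z,V_\alpha)=0$ is immediate from Corollary \ref{orthog}, since $\mathfrak{v}_\alpha\subset\mathfrak{m}$ for $\alpha\in I_1(Z)$. For the remaining squared norms I plan to apply the second identity of Lemma \ref{le1},
\[
g([W,[Y,Z]],Z)+g([Y,Z],[W,Z])=0,
\]
to the constant length field $X=Z$. Using the bracket formulas $[U_\alpha,Z]=-V_\alpha$, $[V_\alpha,Z]=U_\alpha$ (which hold since $\langle Z,\alpha\rangle=1$) and $[U_\alpha,V_\alpha]=\alpha$, the choices $(Y,W)=(U_\alpha,U_\alpha)$ and $(V_\alpha,V_\alpha)$ yield $g(V_\alpha,V_\alpha)=g(U_\alpha,U_\alpha)=g(\alpha,Z)$, while $(Y,W)=(U_\alpha,V_\alpha)$ kills the first term (since $[V_\alpha,V_\alpha]=0$) and leaves $g(U_\alpha,V_\alpha)=0$.

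To access the identities involving $\alpha$ itself I plan to pass to the Weyl reflection. By Lemma \ref{pr1.1} the Killing field $Z''=Z_0+\varphi_\alpha(Z_1)$ is again of constant length on~$(M,g)$. Since $\langle Z_1,\alpha\rangle=\langle Z,\alpha\rangle=1$ and $Z_0$ is $\langle\cdot,\cdot\rangle$-orthogonal to $\alpha$, a direct computation gives $Z''=Z-c\alpha$ with $c=2/\langle\alpha,\alpha\rangle$, and hence
\[
[U_\alpha,Z'']=-V_\alpha+c\langle\alpha,\alpha\rangle V_\alpha=V_\alpha,\qquad [V_\alpha,Z'']=-U_\alpha.
\]
Applying equation (\ref{e0}) of Lemma \ref{le1} to $Z''$ with $Y=U_\alpha$ and $Y=V_\alpha$, and using $g(U_\alpha,Z)=g(V_\alpha,Z)=0$ from the previous step, extracts $g(\alpha,U_\alpha)=g(\alpha,V_\alpha)=0$.

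For the final identity $g(\alpha,\alpha)=\langle\alpha,\alpha\rangle\,g(\alpha,Z)$ I plan to apply the second identity of Lemma \ref{le1} to $Z''$ with $Y=W=U_\alpha$. Using $[U_\alpha,Z'']=V_\alpha$ and $[U_\alpha,V_\alpha]=\alpha$, this reduces to $g(\alpha,Z'')+g(V_\alpha,V_\alpha)=0$. Substituting $g(V_\alpha,V_\alpha)=g(\alpha,Z)$ from the first step and $g(\alpha,Z'')=g(\alpha,Z)-c\,g(\alpha,\alpha)$ gives $2g(\alpha,Z)=c\,g(\alpha,\alpha)$, which is precisely $g(\alpha,\alpha)=\langle\alpha,\alpha\rangle\,g(\alpha,Z)$. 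The only possible obstacle is the bookkeeping of the shifted brackets; the decisive observation is that conjugation by $\varphi_\alpha$ produces a second constant length Killing field differing from $Z$ by an explicit multiple of $\alpha$, which is exactly what is needed to interlock the vector $\alpha$ itself with the identities already derived from $Z$.
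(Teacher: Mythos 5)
Your proof is correct, but its second half takes a genuinely different route from the paper. The shared part: $g(Z,\mathfrak{v}_{\alpha})=0$ from corollary \ref{orthog}, and the use of (\ref{e00}) in lemma \ref{le1} together with the relations (\ref{N}) (under the normalization $\langle Z,\alpha\rangle=1$) to obtain $g(U_{\alpha},V_{\alpha})=0$ --- the paper's substitution is exactly your choice $(Y,W)=(U_{\alpha},V_{\alpha})$; your extraction of $g(U_{\alpha},U_{\alpha})=g(V_{\alpha},V_{\alpha})=g(\alpha,Z)$ from the choices $(U_{\alpha},U_{\alpha})$ and $(V_{\alpha},V_{\alpha})$ is an equivalent variant of the paper's computation. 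Where you diverge: for the identities involving the vector $\alpha$ itself, the paper never leaves the single field $Z$; it repeatedly differentiates the identically vanishing functions $g(U_{\alpha},V_{\alpha})$, $g(U_{\alpha},Z)$, $g(V_{\alpha},Z)$, $g(V_{\alpha},\alpha)$ along the Killing fields $U_{\alpha},V_{\alpha}$ via $W\cdot g(X,Y)=g([W,X],Y)+g(X,[W,Y])$, which yields $g(\alpha,U_{\alpha})=g(\alpha,V_{\alpha})=0$, the norm equalities, and finally $g(\alpha,\alpha)=\langle\alpha,\alpha\rangle\,g(\alpha,Z)$ in sequence. You instead manufacture a second constant-length field $Z''=Z_0+\varphi_{\alpha}(Z_1)=Z-\frac{2}{\langle\alpha,\alpha\rangle}\alpha$ via lemma \ref{pr1.1} (hence lemma \ref{s1} and property (i) of the Weyl group in section \ref{rsdsec}) and apply (\ref{e0}) and (\ref{e00}) to $Z''$; your bracket bookkeeping $[U_{\alpha},Z'']=V_{\alpha}$, $[V_{\alpha},Z'']=-U_{\alpha}$ is right, and the resulting relations (e.g. $g(\alpha,Z'')+g(V_{\alpha},V_{\alpha})=0$ combined with $g(V_{\alpha},V_{\alpha})=g(\alpha,Z)$, and the cancellation using $c=2/\langle\alpha,\alpha\rangle\neq 0$) do give all the remaining equalities. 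What each approach buys: the paper's argument is more elementary and self-contained, needing only lemma \ref{le1} and the constancy of inner products already in hand, so it does not presuppose the conjugation machinery at all; your argument is slightly heavier but makes the reflection $\varphi_{\alpha}$ do the work explicitly, packaging the $\alpha$-identities as orthogonality statements for the auxiliary field $Z''$ --- the same mechanism the paper itself exploits later, e.g. in lemma \ref{pr1.1}, proposition \ref{newpr6.5} and throughout section \ref{fshcsec}.
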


\begin{proof}
The equality $g(Z, \mathfrak{v}_{\alpha})=0$ follows from corollary \ref{orthog}.
Substituting $X:=Z$, $Z:=V_{\alpha}$ and $Y:=U_{\alpha}$ in the equality
$g([Z,[Y,X]],X)+g([Y,X],[Z,X])=0$ (see (\ref{e00}) in lemma \ref{le1}) and using $g(Z, \mathfrak{v}_{\alpha})=0$ and (\ref{N}), we get
$$
0=g([V_{\alpha},[U_{\alpha},Z]],Z)+g([[U_{\alpha},Z],[V_{\alpha},Z])=-{\langle Z, \alpha \rangle}^2g(U_{\alpha},V_{\alpha})=-g(U_{\alpha},V_{\alpha}).
$$ Further,
\begin{eqnarray*}
0=U_{\alpha}\cdot g(U_{\alpha},V_{\alpha})=g(U_{\alpha},[U_{\alpha},V_{\alpha}])=g(U_{\alpha},\alpha),\\
0=V_{\alpha}\cdot g(U_{\alpha},V_{\alpha})=g([V_{\alpha},U_{\alpha}],V_{\alpha})=-g(\alpha,V_{\alpha}).
\end{eqnarray*}
Since $g(U_{\alpha},Z)=0$, then
$$
0=V_{\alpha}\cdot g(U_{\alpha},Z)=-g(\alpha,Z)+g(U_{\alpha},{\langle Z,\alpha\rangle}U_{\alpha})=g(U_{\alpha},U_{\alpha})-g(\alpha,Z).
$$
Analogously,
$$
0=U_{\alpha}\cdot g(V_{\alpha},Z)=g(\alpha,Z)+g(V_{\alpha},-{\langle Z,\alpha\rangle}V_{\alpha})=-g(V_{\alpha},V_{\alpha})+g(\alpha,Z).
$$
Further, since $g(U_{\alpha},\alpha)=g(V_{\alpha},\alpha)=0$, we get
$$
0=U_{\alpha}\cdot g(V_{\alpha},\alpha)=g([U_{\alpha},V_{\alpha}],\alpha)+g(V_{\alpha},[U_{\alpha},\alpha])=
g(\alpha,\alpha)-\langle \alpha,\alpha \rangle g(V_{\alpha},V_{\alpha}),
$$
$$
0=V_{\alpha}\cdot g(U_{\alpha},\alpha)=g([V_{\alpha},U_{\alpha}],\alpha)+g(U_{\alpha},[V_{\alpha},\alpha])=
\langle \alpha,\alpha \rangle g(U_{\alpha},U_{\alpha})-g(\alpha,\alpha).
$$
The obtained equalities prove the proposition.
\end{proof}

\begin{lemma}\label{le2}
For every $\alpha, \beta \in I_1(Z)$ the condition $g(\mathfrak{v}_{\alpha},\mathfrak{v}_{\beta})=0$ is equivalent to
the condition $g([\mathfrak{v}_{\alpha},\mathfrak{v}_{\beta}],Z)=0$.
In particular, $g(\mathfrak{v}_{\alpha},\mathfrak{v}_{\beta})=0$ follows from $[\mathfrak{v}_{\alpha},\mathfrak{v}_{\beta}]\subset \mathfrak{m}$.

\end{lemma}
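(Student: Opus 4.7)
The plan is to turn every inner product $g(X,Y)$ with $X\in\{U_\alpha,V_\alpha\}$ and $Y\in\{U_\beta,V_\beta\}$ into an expression of the form $g([\,\cdot\,,\,\cdot\,],Z)$ with the two arguments in the bracket ranging over the same basis, and then read off the equivalence by bilinearity.

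First I would use the standard Killing identity $X\cdot g(Y,W)=g([X,Y],W)+g(Y,[X,W])$ with $X\in\{U_\alpha,V_\alpha\}$, $Y\in\{U_\beta,V_\beta\}$, and $W=Z$. Since $\mathfrak{v}_\beta\subset\mathfrak{m}$, corollary \ref{orthog} gives $g(U_\beta,Z)=g(V_\beta,Z)=0$, so the left-hand side is zero in all four cases. Combined with (\ref{N}) and the normalization $\langle Z,\alpha\rangle=1$, which forces $[U_\alpha,Z]=-V_\alpha$ and $[V_\alpha,Z]=U_\alpha$, the four resulting identities read
\begin{eqnarray*}
g(V_\alpha,U_\beta)&=&g([U_\alpha,U_\beta],Z),\qquad g(V_\alpha,V_\beta)\ =\ g([U_\alpha,V_\beta],Z),\\
g(U_\alpha,U_\beta)&=&-g([V_\alpha,U_\beta],Z),\qquad g(U_\alpha,V_\beta)\ =\ -g([V_\alpha,V_\beta],Z).
\end{eqnarray*}

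Next I would observe that the four brackets $[U_\alpha,U_\beta]$, $[U_\alpha,V_\beta]$, $[V_\alpha,U_\beta]$, $[V_\alpha,V_\beta]$ span $[\mathfrak{v}_\alpha,\mathfrak{v}_\beta]$, while the four scalars $g(U_\alpha,U_\beta)$, $g(U_\alpha,V_\beta)$, $g(V_\alpha,U_\beta)$, $g(V_\alpha,V_\beta)$ control $g(\mathfrak{v}_\alpha,\mathfrak{v}_\beta)$ by bilinearity. The four displayed equalities therefore give immediately the equivalence $g(\mathfrak{v}_\alpha,\mathfrak{v}_\beta)=0\iff g([\mathfrak{v}_\alpha,\mathfrak{v}_\beta],Z)=0$, pointwise on $M$.

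For the final clause I would note that if $[\mathfrak{v}_\alpha,\mathfrak{v}_\beta]\subset\mathfrak{m}$, then corollary \ref{orthog} gives $g([\mathfrak{v}_\alpha,\mathfrak{v}_\beta],Z)=0$, so the already proved equivalence yields $g(\mathfrak{v}_\alpha,\mathfrak{v}_\beta)=0$. There is no genuine obstacle here: the only thing to be careful about is the sign bookkeeping in the brackets with $Z$ and the fact that all identities are pointwise on $M$ (which is why corollary \ref{orthog} is the correct input rather than any global/integral statement).
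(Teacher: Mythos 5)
Your proof is correct and follows essentially the same route as the paper: differentiate the identity $g(\mathfrak{v}_{\beta},Z)=0$ (corollary \ref{orthog}) along vectors of $\mathfrak{v}_{\alpha}$ and use that $\ad(Z)$ acts invertibly on the root spaces with $\langle Z,\alpha\rangle\neq 0$, the paper phrasing this via bijectivity of $L_Z$ on $\mathfrak{v}_{\beta}$ while you spell out the same mechanism in the basis $\{U_{\alpha},V_{\alpha},U_{\beta},V_{\beta}\}$. The final clause is handled exactly as in the paper, via corollary \ref{orthog}.
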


\begin{proof}
By proposition \ref{pr3} we have $g(\mathfrak{v}_{\alpha},Z)=g(\mathfrak{v}_{\beta},Z)=0$.
Further, for any $Y\in \mathfrak{v}_{\alpha}$ and $X\in \mathfrak{v}_{\beta}$ we have
$$
0=X\cdot g(Y,Z)=g([X,Y],Z)+g(Y,[X,Z])= g([X,Y],Z)-g(Y,L_Z(X)),
$$
Since $L_Z$ is bijective on $\mathfrak{v}_{\beta}$, this proves the first assertion of the lemma.
The second assertion follows from the first one and corollary \ref{orthog}.
\end{proof}

\begin{pred}\label{pr9}
For $\alpha, \beta \in I_1(Z)$, the equality $g({\alpha},\mathfrak{v}_\beta)=0$ holds under each of the following conditions:
$$
{\rm 1)}\, \alpha=\beta\,; \quad
{\rm 2)}\, \langle \alpha,\beta \rangle = 0\,;\quad
{\rm 3)}\, g(\mathfrak{v}_{\alpha},\mathfrak{v}_{\beta})=0\,.
$$
If $X\in \mathfrak{t}$ is such that $g(X,\alpha)=0$, then $g(X,\mathfrak{v}_{\alpha})=0$.
\end{pred}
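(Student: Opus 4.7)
The proof will handle the three conditions separately and then deduce the last statement.

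Case 1 ($\alpha = \beta$) is immediate from Proposition \ref{pr3}, which already states $g(\alpha, U_\alpha) = g(\alpha, V_\alpha) = 0$. For the last statement, I would expand $V_\alpha \cdot g(X, \alpha) = 0$ via the Leibniz rule, using the brackets $[V_\alpha, X] = \langle \alpha, X\rangle U_\alpha$ and $[V_\alpha, \alpha] = \langle \alpha, \alpha\rangle U_\alpha$ from (\ref{N}); after invoking $g(U_\alpha, \alpha) = 0$ from Proposition \ref{pr3}, the identity reduces to $\langle \alpha, \alpha\rangle g(X, U_\alpha) = 0$, and the analogous computation with $U_\alpha$ yields $g(X, V_\alpha) = 0$.

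For case 2 ($\langle \alpha, \beta\rangle = 0$), my plan is to compute $U_\beta \cdot g(\alpha, Z)$ in two different ways. Applying Leibniz together with $g(V_\beta, Z) = 0$ from Corollary \ref{orthog} yields $U_\beta \cdot g(\alpha, Z) = -g(\alpha, V_\beta)$. On the other hand, substituting $g(\alpha, Z) = g(\alpha, \alpha)/\langle\alpha,\alpha\rangle$ from Proposition \ref{pr3} and differentiating via (\ref{N}) gives $U_\beta \cdot g(\alpha, Z) = -\tfrac{2\langle\alpha,\beta\rangle}{\langle\alpha,\alpha\rangle} g(V_\beta, \alpha)$, which vanishes under the case 2 hypothesis. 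Thus $g(\alpha, V_\beta) = 0$; repeating the computation with $V_\beta$ in place of $U_\beta$ gives $g(\alpha, U_\beta) = 0$.

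The main obstacle is case 3 ($g(\mathfrak{v}_\alpha, \mathfrak{v}_\beta) = 0$), because the factor $1 - 2\langle\alpha,\beta\rangle/\langle\alpha,\alpha\rangle$ arising in the case 2 computation may vanish, making the direct approach inconclusive. The strategy is to exploit the Hermitian structure. Set $\gamma := |\alpha - \beta|$ (the case $\alpha = \beta$ is excluded since positive definiteness of $g$ rules out $g(\mathfrak{v}_\alpha, \mathfrak{v}_\alpha) = 0$). Because $\langle Z, \alpha + \beta\rangle = 2$ while $\langle Z, \cdot\rangle \in \{0, 1\}$ on $\Delta^+$, the sum $\alpha + \beta$ is never a root, hence $[\mathfrak{v}_\alpha, \mathfrak{v}_\beta] \subset \mathfrak{v}_\gamma$, and Lemma \ref{le2} combined with the hypothesis gives $g(\mathfrak{v}_\gamma, Z) = 0$. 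The key step is to bootstrap this to $g(\mathfrak{v}_\gamma, \mathfrak{v}_\alpha) = 0$: for $W \in \mathfrak{v}_\gamma$, Leibniz applied to $V_\alpha \cdot g(W, Z) = 0$ produces a term $g([V_\alpha, W], Z)$, and the bracket $[V_\alpha, W] \in \mathfrak{v}_\beta + \mathfrak{v}_{|2\alpha - \beta|}$ lies in $\mathfrak{m}$ since both summands, when they are roots, belong to $I_1(Z)$; that term vanishes, leaving $g(W, U_\alpha) = 0$, and the analogous computation yields $g(W, V_\alpha) = 0$. With $g(\mathfrak{v}_\gamma, \mathfrak{v}_\alpha) = 0$ in hand, $U_\beta \cdot g(U_\alpha, U_\alpha) = 2g([U_\beta, U_\alpha], U_\alpha) = 0$ since $[U_\beta, U_\alpha] \in \mathfrak{v}_\gamma$; reusing $U_\beta \cdot g(\alpha, Z) = -g(\alpha, V_\beta)$ then delivers $g(\alpha, V_\beta) = 0$, and the analogous argument with $V_\beta$ in place of $U_\beta$ gives $g(\alpha, U_\beta) = 0$.
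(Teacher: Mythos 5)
Your proposal is correct, and for parts 1), 2) and the final assertion it is essentially the paper's own argument (Leibniz differentiation combined with proposition \ref{pr3} and the relations (\ref{N})). Where you genuinely diverge is case 3). The paper's proof is shorter and purely computational: from $g(\mathfrak{v}_{\alpha},\mathfrak{v}_{\beta})=0$ it differentiates $g(U_{\alpha},U_{\beta})=0$ along $U_{\alpha}$ to obtain $U_{\beta}\cdot g(U_{\alpha},U_{\alpha})=0$ (and likewise with $V$'s), then substitutes $g(U_{\alpha},U_{\alpha})=g(\alpha,\alpha)/\langle\alpha,\alpha\rangle$ to get $\langle\alpha,\beta\rangle\, g(\mathfrak{v}_{\beta},\alpha)=0$, disposing of the subcase $\langle\alpha,\beta\rangle=0$ by part 2); no Hermitian grading is needed beyond proposition \ref{pr3}. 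You instead transfer the hypothesis through lemma \ref{le2} and the grading fact that $\alpha+\beta$ is never a root (since $\langle Z,\alpha+\beta\rangle=2$), getting $g(\mathfrak{v}_{|\alpha-\beta|},Z)=0$, bootstrap this to $g(\mathfrak{v}_{|\alpha-\beta|},\mathfrak{v}_{\alpha})=0$ via corollary \ref{orthog} (the brackets $[\mathfrak{v}_{\alpha},\mathfrak{v}_{|\alpha-\beta|}]\subset\mathfrak{v}_{\beta}+\mathfrak{v}_{|2\alpha-\beta|}\subset\mathfrak{m}$), and only then kill $U_{\beta}\cdot g(U_{\alpha},U_{\alpha})$. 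This is valid: the one point you gloss over is the passage from $g([\mathfrak{v}_{\alpha},\mathfrak{v}_{\beta}],Z)=0$ to $g(\mathfrak{v}_{|\alpha-\beta|},Z)=0$, which needs $[\mathfrak{v}_{\alpha},\mathfrak{v}_{\beta}]$ to be either $0$ (strong orthogonality, so $\langle\alpha,\beta\rangle=0$ and part 2) applies, or your later steps become vacuous anyway) or all of $\mathfrak{v}_{|\alpha-\beta|}$ (true, since the span of the brackets is $\ad(\mathfrak{t})$-invariant and $\mathfrak{v}_{|\alpha-\beta|}$ is $\ad(\mathfrak{t})$-irreducible, as the paper records in section \ref{rsdsec}); alternatively one can just apply your bootstrap directly to $W=[U_{\beta},U_{\alpha}]$ and $W=[V_{\beta},U_{\alpha}]$. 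In exchange for being longer, your route avoids the dichotomy on $\langle\alpha,\beta\rangle$ (and the vanishing factor $1-2\langle\alpha,\beta\rangle/\langle\alpha,\alpha\rangle$ you rightly worried about) and yields the extra orthogonality $g(\mathfrak{v}_{|\alpha-\beta|},\mathfrak{v}_{\alpha})=0$ as a by-product, whereas the paper's version is two lines and does not use the Hermitian normalization of $\langle Z,\cdot\rangle$ at all.
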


\begin{proof}
If $\alpha=\beta$, then $g(\mathfrak{v}_{\alpha},\beta)=0$ by proposition \ref{pr3}.
Further, suppose that $\langle \alpha,\beta \rangle = 0$, which implies $[\alpha,\mathfrak{v}_{\beta}]=0$.
Since $g(\alpha,\alpha)=\langle \alpha, \alpha\rangle \, g(\alpha,Z)$ by proposition~\ref{pr3}, we get
$$
0=2g([X,\alpha],\alpha)=X\cdot g(\alpha,\alpha)=\langle \alpha, \alpha\rangle \,X\cdot g(\alpha,Z)=\langle \alpha, \alpha\rangle \,g(\alpha,[X,Z])
$$
for any $X\in \mathfrak{v}_{\beta}$, where $\langle \alpha,\beta \rangle = 0$
(note that $[\alpha,\mathfrak{v}_{\beta}]=0$ for $\langle \alpha,\beta \rangle = 0$).
Since $[Z,\mathfrak{v}_\beta]=\mathfrak{v}_\beta$, we get $g({\alpha},\mathfrak{v}_\beta)=0$.

Now, suppose that $g(\mathfrak{v}_{\alpha},\mathfrak{v}_{\beta})=0$. If $\langle \alpha,\beta \rangle = 0$ then we can apply 2).
Consider the case $\langle \alpha,\beta \rangle \neq 0$.

By proposition \ref{pr3} we have
$g(U_{\alpha},U_{\alpha})=g(V_{\alpha},V_{\alpha})=\frac{1}{\langle \alpha, \alpha\rangle} \,g(\alpha,\alpha)$.
From the equality $g(\mathfrak{v}_{\alpha},\mathfrak{v}_{\beta})=0$ we get
\begin{eqnarray*}
0&=&-2U_{\alpha}\cdot g(U_{\alpha},U_{\beta})=2g(U_{\alpha},[U_{\beta},U_{\alpha}])=U_{\beta}\cdot g(U_{\alpha},U_{\alpha}),\\
0&=&-2V_{\alpha}\cdot g(V_{\alpha},V_{\beta})=2g(V_{\alpha},[V_{\beta},V_{\alpha}])=V_{\beta}\cdot g(V_{\alpha},V_{\alpha}).
\end{eqnarray*}
Therefore,
{\small\begin{eqnarray*}
0&=&U_{\beta}\cdot g(U_{\alpha},U_{\alpha})=\frac{1}{\langle \alpha, \alpha \rangle} \Bigl(U_{\beta}\cdot g(\alpha,\alpha)\Bigr)=
\frac{2}{\langle \alpha, \alpha\rangle}\, g([U_{\beta},\alpha],\alpha)=
-\frac{2 \langle \alpha, \beta\rangle}{\langle \alpha, \alpha\rangle} g(V_{\beta},\alpha),\\
0&=&V_{\beta}\cdot g(V_{\alpha},V_{\alpha})=\frac{1}{\langle \alpha, \alpha\rangle} \Bigl(V_{\beta}\cdot g(\alpha,\alpha)\Bigr)=
\frac{2}{\langle \alpha, \alpha\rangle}\, g([V_{\beta},\alpha],\alpha)=
\frac{2 \langle \alpha, \beta\rangle}{\langle \alpha, \alpha \rangle} g(U_{\beta},\alpha).
\end{eqnarray*}}
Hence, $g({\alpha},\mathfrak{v}_\beta)=0$ in this case too.

Finally, if $g(X,\alpha)=0$ for $X\in \mathfrak{t}$, then for any $Y\in \mathfrak{v}_{\alpha}$ we get
$0=Y\cdot g(X,\alpha)=g([Y,X],\alpha)+g(X,[Y,\alpha])$. Since $[Y,X]\in \mathfrak{v}_{\alpha}$, we get
$g([Y,X],\alpha)\subset g(\mathfrak{v}_{\alpha},\alpha)=0$. Hence, $g(X,[Y,\alpha])=0$. Since $Y$ is an arbitrary in $\mathfrak{v}_{\alpha}$,
then $g(X,\mathfrak{v}_{\alpha})=0$.
\end{proof}

\begin{remark}
Note that $[\mathfrak{v}_{\alpha},\mathfrak{v}_{\beta}]=0$ implies $\langle {\alpha},{\beta} \rangle =0$.
Such roots ${\alpha}$ and ${\beta}$ are called {\it strongly orthogonal}.
\end{remark}

\begin{lemma}\label{le8}
If $g(\mathfrak{v}_{\alpha},\beta)=0$ for some $\alpha, \beta \in I_1(Z)$, then
$$
\langle \alpha,\beta \rangle \,g(U_{\alpha},U_{\alpha})=\langle \alpha,\beta \rangle \,g(V_{\alpha},V_{\alpha})=g(\alpha,\beta).
$$
In particular, if $\alpha,\beta \in I_1(Z)$ are such that $\langle \alpha,\beta \rangle=0$, then $g(\alpha,\beta)=0$.
\end{lemma}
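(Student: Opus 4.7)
The plan is to extract the conclusion from differentiating the two scalar identities $g(U_\alpha,\beta)=0$ and $g(V_\alpha,\beta)=0$ (contained in the hypothesis $g(\mathfrak{v}_\alpha,\beta)=0$) along the Killing fields $V_\alpha$ and $U_\alpha$ respectively. The underlying tool is the standard identity $X\cdot g(Y,W)=g([X,Y],W)+g(Y,[X,W])$ for Killing fields, which is used systematically throughout the paper (see e.g.\ the proofs of lemma~\ref{le1} and proposition~\ref{pr3}).

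First I would compute $0=U_\alpha\cdot g(V_\alpha,\beta)$. Using $[U_\alpha,V_\alpha]=\alpha$ and the bracket rule $[H,V_\alpha]=-\langle\alpha,H\rangle U_\alpha$ from (\ref{N}) (so that $[U_\alpha,\beta]=-\langle\alpha,\beta\rangle V_\alpha$), this expands to
\begin{equation*}
0=g(\alpha,\beta)-\langle\alpha,\beta\rangle\,g(V_\alpha,V_\alpha).
\end{equation*}
Symmetrically, $0=V_\alpha\cdot g(U_\alpha,\beta)$, together with $[V_\alpha,U_\alpha]=-\alpha$ and $[V_\alpha,\beta]=\langle\alpha,\beta\rangle U_\alpha$, yields
\begin{equation*}
0=-g(\alpha,\beta)+\langle\alpha,\beta\rangle\,g(U_\alpha,U_\alpha).
\end{equation*}
Combining the two displayed identities with the equality $g(U_\alpha,U_\alpha)=g(V_\alpha,V_\alpha)$ from proposition~\ref{pr3} gives precisely
\begin{equation*}
\langle\alpha,\beta\rangle\,g(U_\alpha,U_\alpha)=\langle\alpha,\beta\rangle\,g(V_\alpha,V_\alpha)=g(\alpha,\beta),
\end{equation*}
as required.

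For the ``in particular'' assertion, suppose $\langle\alpha,\beta\rangle=0$. Swapping the roles of $\alpha$ and $\beta$ in case~2) of proposition~\ref{pr9} gives $g(\mathfrak{v}_\alpha,\beta)=0$, so the hypothesis of the first part is automatically satisfied, and the main identity immediately yields $g(\alpha,\beta)=\langle\alpha,\beta\rangle g(V_\alpha,V_\alpha)=0$.

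There is no real obstacle: the argument is a two–line derivation based on the Killing property of $U_\alpha,V_\alpha,\beta$ together with the bracket formulas (\ref{N}) and the normalizations from proposition~\ref{pr3}. The only care required is in keeping track of signs in the bracket conventions used in (\ref{N}) (recall $[H,U_\alpha]=\langle\alpha,H\rangle V_\alpha$, $[H,V_\alpha]=-\langle\alpha,H\rangle U_\alpha$), which must be applied consistently when expanding $[U_\alpha,\beta]$ and $[V_\alpha,\beta]$.
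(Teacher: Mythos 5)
Your proposal is correct and follows essentially the same route as the paper: the paper's proof is exactly the computation $0=U_{\alpha}\cdot g(V_{\alpha},\beta)=g(\alpha,\beta)-\langle\alpha,\beta\rangle\,g(V_{\alpha},V_{\alpha})$, combined with proposition \ref{pr3} (for $g(U_{\alpha},U_{\alpha})=g(V_{\alpha},V_{\alpha})$) and proposition \ref{pr9} (for the case $\langle\alpha,\beta\rangle=0$). Your additional symmetric computation $0=V_{\alpha}\cdot g(U_{\alpha},\beta)$ is a harmless variant that just re-derives what proposition \ref{pr3} already supplies.
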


\begin{proof} By direct calculations we get
$$
0=U_{\alpha}\cdot g(V_{\alpha},\beta)=g([U_{\alpha},V_{\alpha}],\beta)+g(V_{\alpha},[U_{\alpha},\beta])=
g(\alpha,\beta)-\langle \alpha,\beta \rangle g(V_{\alpha},V_{\alpha}).
$$
From this, proposition \ref{pr3} and proposition \ref{pr9} we get the lemma.
\end{proof}

\begin{pred}\label{newprort}
Put $\mathfrak{v}:=\bigoplus_{\alpha \in \Delta^+} \mathfrak{v}_{\alpha}$. Then the following assertions holds.

{\rm 1)} If $g(\mathfrak{t},\mathfrak{v})=0$ on
$(M,g)$, then every $X\in \mathfrak{g}_1$ has constant length on $(M,g)$;

{\rm 2)} If the roots of $\mathfrak{g}_1$ have one and the same length, and
$g(\mathfrak{t},\mathfrak{v}_{\alpha})=0$ for some $\alpha \in \Delta^+$, then  $g(\mathfrak{t},\mathfrak{v})=0$ on
$(M,g)$.
\end{pred}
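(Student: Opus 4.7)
The plan is to treat part 1) as a direct application of Proposition \ref{pre1} on the Cartan subalgebra, and then to spread constancy of length across all of $\mathfrak{g}_1$ by conjugating into $\mathfrak{t}$; part 2) will follow from the Weyl-group transitivity on roots of a fixed length (property (vi) of Section \ref{rsdsec}) combined with Lemma \ref{s1}. Both arguments essentially recycle techniques already used in the proof of Proposition \ref{newpr5}.

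For part 1), fix $X\in \mathfrak{t}$. By Proposition \ref{pre1} it suffices to check $g([X,[X,Y]],X)=0$ on $M$ for every $Y\in \mathfrak{g}$. Splitting $Y$ along the decomposition (\ref{str1}), the bracket $[X,Y]$ vanishes on $\mathfrak{c}$, on $\mathfrak{g}_j$ for $j\neq 1$, and on $\mathfrak{t}$, so the only contributions come from components lying in the root spaces $\mathfrak{v}_\alpha$; but on such a summand (\ref{N}) gives $[X,[X,Y_\alpha]]=-\langle X,\alpha\rangle^2 Y_\alpha$, and hence $g([X,[X,Y_\alpha]],X)=-\langle X,\alpha\rangle^2 g(Y_\alpha,X)=0$ by the hypothesis $g(\mathfrak{t},\mathfrak{v})=0$. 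Thus every $X\in \mathfrak{t}$ has constant length on $(M,g)$. For an arbitrary $X\in \mathfrak{g}_1$, property (v) of Section \ref{rsdsec} yields an inner automorphism $\psi$ of $\mathfrak{g}_1$ with $\psi(X)\in \mathfrak{t}$; extending $\psi$ to an inner automorphism of $\mathfrak{g}$ acting identically on $\mathfrak{c}$ and on each $\mathfrak{g}_j$ with $j\neq 1$, I apply Lemma \ref{s1} to $\psi^{-1}$ to conclude that $X$ also has constant length on $(M,g)$.

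For part 2), fix an arbitrary $\beta\in \Delta^+$ and, using property (vi), choose $w\in W$ with $w(\alpha)=\beta$. Property (v) then produces an inner automorphism $\eta$ of $\mathfrak{g}_1$ that stabilizes $\mathfrak{t}$ and restricts there to $w$; by $\ad(\mathfrak{t})$-equivariance together with the structure relations (\ref{N}), $\eta$ sends the two-dimensional root space $\mathfrak{v}_\alpha$ onto $\mathfrak{v}_{|w(\alpha)|}=\mathfrak{v}_\beta$, while $\eta(\mathfrak{t})=\mathfrak{t}$. Extending $\eta$ to an inner automorphism of $\mathfrak{g}$ acting trivially off $\mathfrak{g}_1$ and invoking Lemma \ref{s1}, the identity $g(H,Y)\equiv 0$ for $H\in \mathfrak{t}$ and $Y\in \mathfrak{v}_\alpha$ transfers to $g(\eta(H),\eta(Y))\equiv 0$, that is $g(\mathfrak{t},\mathfrak{v}_\beta)=0$. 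Since $\beta\in \Delta^+$ was arbitrary, $g(\mathfrak{t},\mathfrak{v})=0$ follows.

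The only mildly delicate point I foresee is the identification $\eta(\mathfrak{v}_\alpha)=\mathfrak{v}_\beta$ in part 2): one needs to verify that an inner automorphism implementing $w$ on $\mathfrak{t}$ actually carries the root space indexed by $\alpha$ onto the one indexed by $|w(\alpha)|$. This reduces to comparing eigenvalues of $\ad(H)$ on $\eta(\mathfrak{v}_\alpha)$ for $H\in \mathfrak{t}$, which by (\ref{N}) are determined by $\langle H,w(\alpha)\rangle$, so $\eta(\mathfrak{v}_\alpha)$ is forced to be the (two-dimensional real) root space at $|w(\alpha)|$ regardless of sign. Everything else is routine bookkeeping of a kind already employed throughout Section \ref{gen1}.
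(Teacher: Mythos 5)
Your proof is correct and follows essentially the same route as the paper: part 1) via criterion 2) of Proposition \ref{pre1} together with the relations (\ref{N}), and part 2) via inner automorphisms stabilizing $\mathfrak{t}$, Lemma \ref{s1}, and the transitivity of $W$ on roots of equal length. The only difference is that you spell out two steps the paper leaves implicit — the passage from $X\in\mathfrak{t}$ to arbitrary $X\in\mathfrak{g}_1$ by conjugation (as in Proposition \ref{newpr5}) and the identification $\eta(\mathfrak{v}_{\alpha})=\mathfrak{v}_{|w(\alpha)|}$ — both of which you verify correctly.
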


\begin{proof} For every $X \in  \mathfrak{t}$ we have $g(X,\mathfrak{v}_{\alpha})=0$ for all $\alpha \in \Delta^+$.
This obviously implies that $g([X,[X,Y]],X)=0$ on $(M,g)$ for every $Y\in \mathfrak{g}$. Hence,
$X$ has constant length on $(M,g)$ by proposition \ref{pre1}, that proves 1).

Suppose that $g(\mathfrak{t},\mathfrak{v}_{\alpha})=0$ for some $\alpha \in \Delta^+$.
Let $\eta$ be an inner automorphism of $\mathfrak{g}_1$ (and of $\mathfrak{g}$ after the natural extension of the action),
such that $\eta(\mathfrak{t})=\mathfrak{t}$. Then
$g(\mathfrak{t},\eta(\mathfrak{v}_{\alpha}))=g(\eta(\mathfrak{t}),\eta(\mathfrak{v}_{\alpha}))=0$ by lemma \ref{s1}.
Restriction of $\eta$ to $\mathfrak{t}$ is an element $w$ of the Weil group $W$.
If $\beta=w(\alpha)$, this mean $g(\mathfrak{t},\mathfrak{v}_{\beta})=0$.
Since all $w\in W$ could be obtained by this manner (see section \ref{rsdsec}),
then $g(\mathfrak{t},\mathfrak{v}_{w(\alpha)})=0$ for all $w\in W$. Since the Weyl group acts transitively on the set of the roots
of a fixed length, then $g(\mathfrak{t},\mathfrak{v})=0$ for $\mathfrak{g}_1$ with the roots of one and the same length.
\end{proof}

\begin{remark}
The list of simple Lie algebras with the roots of one and the same length is the following:
$A_l=su(l+1),D_l=so(2l), e_6, e_7, e_8$.
\end{remark}

Now we are ready to finish the proof theorem \ref{theorem2}.

\begin{pred}\label{newpr6}
If the case {\rm 2)} of proposition \ref{newpr4} holds but {\rm 1)} does not hold, then
the pair $(\mathfrak{g}_1, \mathfrak{k})$ is one of the following irreducible Hermitian symmetric pair:

{\rm 1)} $(su(p+q), su(p)\oplus su(q)\oplus \mathbb{R})$, $p\geq q \geq 1$;

{\rm 2)} $(so(2n),su(n)\oplus \mathbb{R})$, $n\geq 5$;

{\rm 3)} $(so(p+2), so(p)\oplus \mathbb{R})$, $p \geq 5$;

{\rm 4)} $(sp(n),su(n)\oplus \mathbb{R})$, $n\geq 2$.
\end{pred}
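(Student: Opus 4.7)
By proposition \ref{newpr4} case 2), the pair $(\mathfrak{g}_1, \mathfrak{k})$ is one of the six irreducible Hermitian symmetric pairs of compact type: the four series listed in 1)--4), together with the two exceptional pairs $(e_6, so(10) \oplus \mathbb{R})$ and $(e_7, e_6 \oplus \mathbb{R})$. The plan is to exclude the two exceptional pairs by showing that for them, case 2) automatically forces case 1) of proposition \ref{newpr4} to hold as well, contradicting the standing hypothesis.

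Both $e_6$ and $e_7$ are simply-laced, so proposition \ref{newprort} parts (1) and (2) together reduce case 1) to the single scalar equation $g(\mathfrak{t}, \mathfrak{v}_\alpha) = 0$ on $(M,g)$ for just one root $\alpha \in \Delta^+$. By the final assertion of proposition \ref{pr9}, if $\alpha \in I_1(Z)$ this is in turn equivalent to $g(X, \alpha) = 0$ for every $X \in \mathfrak{t}$.

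To produce such an $\alpha$ in the exceptional cases, I would combine lemma \ref{le8} (strongly orthogonal roots $\alpha, \beta \in I_1(Z)$ automatically satisfy $g(\alpha, \beta) = 0$) with lemma \ref{pr1.1}, which allows replacing $Z$ by any Weyl translate $Z_0 + w(Z_1)$. This means the orthogonality relations of lemma \ref{le8} can be invoked with $I_1(w(Z))$ for each $w$ in the Weyl group, producing a large family of $g$-vanishing identities among vectors in $\mathfrak{t}$. Using the explicit root data from cases (5) and (6) of section \ref{rsdsec}, where the non-compact simple roots are $\pi_1, \pi_6$ for $e_6$ and $\pi_7$ for $e_7$, I would enumerate $I_1(Z)$ explicitly and identify a root $\alpha \in I_1(Z)$ whose $g$-vanishing on a spanning set of $\mathfrak{t}$ can be read off from these identities together with the vector $\alpha$ itself (via the additional input that $Z$ lies in the center of $\mathfrak{k}$).

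\textbf{Main obstacle.} Proposition \ref{pr3} yields $g(\alpha, \alpha) = \langle \alpha, \alpha\rangle \, g(\alpha, Z)$, and $g(\alpha, Z)$ is not automatically zero, so even when $\alpha$ is strongly orthogonal to many other roots in $I_1(Z)$ one must still establish $g(\alpha, Z) = 0$ separately in order to get $g(X, \alpha) = 0$ at $X = \alpha$. The delicate step is to combine strong-orthogonality identities across multiple Weyl translates to force this last vanishing, exploiting the specific combinatorial structure of the $E_6$ and $E_7$ root systems. This is precisely the rigidity that distinguishes the two exceptional pairs from the four classical ones of the list 1)--4), for which the examples of section \ref{examsec} show that case 2) can indeed occur without case 1).
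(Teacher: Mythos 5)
Your high-level plan coincides with the paper's: list the six irreducible compact Hermitian symmetric pairs allowed by proposition \ref{newpr4}, then use simply-lacedness, proposition \ref{newprort}, and the Weyl-translate trick (lemma \ref{pr1.1}) to exclude $(e_6, so(10)\oplus \mathbb{R})$ and $(e_7, e_6\oplus \mathbb{R})$. But the concrete mechanism you propose for the exclusion cannot work. You reduce the needed identity $g(\mathfrak{t},\mathfrak{v}_{\alpha})=0$ to showing $g(X,\alpha)=0$ for \emph{every} $X\in\mathfrak{t}$, and you correctly observe that this requires $g(\alpha,Z)=0$ so that $g(\alpha,\alpha)=\langle\alpha,\alpha\rangle\, g(\alpha,Z)=0$. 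That target is not merely delicate, it is false: by proposition \ref{pr3}, $g(\alpha,Z)=g(U_{\alpha},U_{\alpha})$, so $g(\alpha,Z)\equiv 0$ on $M$ would force the nonzero Killing fields $U_{\alpha}$, $V_{\alpha}$ and $\alpha$ itself to vanish identically, contradicting positive definiteness of $g$ and the injectivity of the identification $U\mapsto\widetilde{U}$. Consequently no accumulation of lemma \ref{le8} identities over Weyl translates can ever yield $g(\mathfrak{t},\alpha)=0$; the ``main obstacle'' you isolate is a dead end rather than a gap to be bridged (and the last assertion of proposition \ref{pr9} gives only the implication $g(X,\alpha)=0\Rightarrow g(X,\mathfrak{v}_{\alpha})=0$, not an equivalence).

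What actually closes the argument is a different reduction: one never tries to make $\mathfrak{t}$ $g$-orthogonal to the root \emph{vector} $\gamma$, but instead proves $g(X,\mathfrak{v}_{\gamma})=0$ directly for a spanning set of $X\in\mathfrak{t}$ consisting of roots of $I_1(Z)$. Parts 1) and 2) of proposition \ref{pr9} give $g(\gamma,\mathfrak{v}_{\gamma})=0$ and $g(\beta,\mathfrak{v}_{\gamma})=0$ for every $\beta\in I_1(Z)$ with $\langle\beta,\gamma\rangle=0$, and this costs nothing like $g(\gamma,\gamma)=0$. For $e_6$ take $\gamma=\frac{1}{2}\bigl(e_8-e_7-e_6+\sum_{i=1}^{5}e_i\bigr)$: suitable roots $\alpha,\beta\in I_1(Z)$ orthogonal to $\gamma$ produce, via differences, all vectors $e_5-e_i$ and then $e_i$, $1\leq i\leq 5$, while $e_8-e_7-e_6=2\gamma-\sum_{i=1}^{5}e_i$ is handled by $g(\gamma,\mathfrak{v}_{\gamma})=0$; hence $g(\mathfrak{t},\mathfrak{v}_{\gamma})=0$. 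For $e_7$ take $\gamma=e_8-e_7$, which is orthogonal to all $e_6\pm e_i\in I_1(Z)$, giving $g(e_i,\mathfrak{v}_{\gamma})=0$ for $1\leq i\leq 6$ and $g(e_8-e_7,\mathfrak{v}_{\gamma})=0$. Then proposition \ref{newprort} forces every $X\in\mathfrak{g}_1$ to have constant length, contradicting the hypothesis that case 1) fails, and the two exceptional pairs are excluded. So redirect your plan from orthogonality of $\mathfrak{t}$ to the root vector toward orthogonality of a spanning set of $\mathfrak{t}$ to the root space $\mathfrak{v}_{\gamma}$; with that correction the rest of your outline (the list of six pairs and the treatment of the four classical ones) is sound.
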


\begin{proof} There are the following irreducible Hermitian symmetric pair (see \cite{Wolf64} or \cite{WolfBook}):
$(su(p+q), su(p)\oplus su(q)\oplus \mathbb{R})$, $p\geq q \geq 1$; $(so(2n),su(n)\oplus \mathbb{R})$, $n\geq 5$;
$(so(p+2), so(p)\oplus \mathbb{R})$, $p \geq 5$; $(sp(n),su(n)\oplus \mathbb{R})$, $n\geq 2$;
$(e_6, so(10)\oplus \mathbb{R})$; $(e_7, e_6 \oplus \mathbb{R})$.

We should exclude the last two pairs. Let us start with the pair $(e_6, so(10)\oplus \mathbb{R})$.
There are two non-compact roots for $e_6$: $\pi_1$ and $\pi_6$, but we may consider only first case, because the second is reduced to the first
by a suitable automorphism of the Lie algebra $e_6$. Hence $Z\in \mathfrak{t}$ is such that $\langle Z,\pi_i\rangle=0$ for $i=2,\dots,6$.
Up to multiple by a constant we get $Z=\frac{2}{3}(e_8-e_7-e_6)$, see section \ref{rsdsec}.

All root $\alpha=\frac{1}{2}\bigl(e_8-e_7-e_6+\sum_{i=1}^5 (-1)^{v_i} e_i\bigr)$ with even $\sum_{i=1}^5 v_i$ are in $I_1(Z)$.
Consider also $\beta=\frac{1}{2}\bigl(e_8-e_7-e_6+\sum_{i=1}^5 (-1)^{u_i} e_i\bigr)$ with even $\sum_{i=1}^5 u_i$, $\alpha \neq \beta$.
Clear that $\langle \alpha ,\beta \rangle=0$ if and only if $\sum_{i=1}^5 (-1)^{v_i+u_i}=-3$, that means that $(u_1,\dots,u_5)$ and $(v_1,\dots,v_5)$
are distinct exactly in four coordinates.
Put $(v_1,\dots,v_5)=(1,1,1,1,0)$ and let $(u_1,\dots,u_5)$ be such that $u_5=1$ and there are exactly one $0$ among $u_i$, $1\leq i \leq 4$.

If we consider also $\gamma:=\frac{1}{2}\bigl(e_8-e_7-e_6+\sum_{i=1}^5 e_i\bigr)$,
then $\langle \alpha ,\gamma \rangle=\langle \beta ,\gamma \rangle=0$ and
$g(\alpha,\mathfrak{v}_{\gamma})=g(\beta,\mathfrak{v}_{\gamma})=g(\alpha-\beta ,\mathfrak{v}_{\gamma})=0$ by proposition \ref{pr9}.
The vectors $\alpha-\beta$ are exactly all vectors of the form $e_5-e_i$,  $1\leq i \leq 4$.
Moreover, $g(\gamma,\mathfrak{v}_{\gamma})=0$ by proposition \ref{pr9}. Hence, $g(\alpha-\gamma,\mathfrak{v}_{\gamma})=0$.
Since $\gamma-\alpha=e_1+e_2+e_3+e_4$, then $g(e_i,\mathfrak{v}_{\gamma})=0$ for $1\leq i \leq 5$. Moreover, $g(e_8-e_7-e_6,\mathfrak{v}_{\gamma})=0$, because
$e_8-e_7-e_6=2\gamma-\sum_{i=1}^5 e_i$. Therefore, $g(\mathfrak{t},\mathfrak{v}_{\gamma})=0$ and
all $X\in \mathfrak{g}_1$ has constant length on $(M,g)$ by proposition~\ref{newprort}.
Hence, this case could be excluded.
\smallskip

Now, consider the pair $(e_7, e_6 \oplus \mathbb{R})$. There is only one non-compact root for $e_7$: $\pi_7$.
Hence $Z\in \mathfrak{t}$ is such that $\langle Z,\pi_i\rangle=0$ for $i=1,\dots,6$.
Up to multiple by a constant we get $Z=\frac{1}{2}(e_8-e_7+2e_6)$, see section \ref{rsdsec}.
It is clear that $e_8-e_7\in I_1(Z)$ and $e_6\pm e_i \in I_1(Z)$ for $1\leq i \leq 5$.
A root $\beta=\frac{1}{2}\bigl(e_8-e_7+\sum_{i=1}^6 (-1)^{v_i} e_i\bigr)$ with odd $\sum_{i=1}^6 v_i$ is in $I_1(Z)$ if and only if
$v_6=0$.

Note that $\langle e_8-e_7, e_6\pm e_i \rangle $=0 for $1\leq i \leq 5$. By proposition \ref{pr9},
we have $g(\alpha,\mathfrak{v}_{\alpha})=0$, where $\alpha=e_8-e_7$ and
$g(e_i,\mathfrak{v}_{\alpha})=0$ for all $1\leq i \leq 6$. Therefore, $g(\mathfrak{t},\mathfrak{v}_{\alpha})=0$ on $(M,g)$
and all $X\in \mathfrak{g}_1$ has constant length on $(M,g)$ by proposition~\ref{newprort}.
Consequently, this case also could be excluded.
\end{proof}
\medskip

\begin{proof}[Proof of theorem \ref{theorem2}] The proof follows immediately from propositions \ref{newpr4}, \ref{newpr5}, and~\ref{newpr6}.
\end{proof}

\section{Further study of the Hermitian case}\label{fshcsec}

In this section we describe some  important properties of Killing vector fields of constant length $Z=Z_0+Z_1$
for the following Hermitian symmetric pairs:
\begin{eqnarray*}
(su(p+q), su(p)\oplus su(q)\oplus \mathbb{R}),\, p\geq q \geq 1;&& (so(2n),su(n)\oplus \mathbb{R}),\, n\geq 5;\\
(so(p+2), so(p)\oplus \mathbb{R}),\,\,\, p \geq 5; && (sp(n),su(n)\oplus \mathbb{R}),\,\,\,\, n\geq 2.
\end{eqnarray*}
This information could be useful in the forthcoming study of homogeneous Riemannian manifolds with nontrivial Killing vector fields of constant length.
\smallskip

We start with the following question.

\begin{vopros}\label{vop1}
Let $Z=Z_0+Z_1$, be a Killing field of constant length on $(M,g)$, where
$Z_0\in \mathfrak{c}$, $Z_1 \in \mathfrak{g}_1$, $Z_1 \neq 0$ as in theorem \ref{theorem2}.
Is it true that $Z_1$ is also a Killing field of constant length on $(M,g)$?
\end{vopros}

We know that the answer to this question is negative in general (see example \ref{exam5} in section \ref{examsec} and proposition \ref{newpr7} below).
But the answer is positive for some types of simple Lie algebras~$\mathfrak{g}_1$.
Recall also that the Killing field $Z_0\in \mathfrak{c}$ always has constant length on $(M,g)$.

\begin{pred}\label{newpr6.5}
If, in the above notation, there is an inner automorphism of $\mathfrak{g}_1$,
such that $\psi(Z_1)=-Z_1$, then both Killing fields $Z_0-Z_1$ and $Z_1$ have constant length on $(M,g)$.
This is the case when the Weyl group $W$ of $\mathfrak{g}_1$ contains the map $-\Id$.
\end{pred}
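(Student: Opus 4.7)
The plan is to first extend $\psi$ to an inner automorphism $\widetilde{\psi}$ of the whole algebra $\mathfrak{g}=\mathfrak{c}\oplus\mathfrak{g}_1\oplus\cdots\oplus\mathfrak{g}_k$ by letting it act as the identity on every summand other than $\mathfrak{g}_1$ --- exactly the device used in Lemma \ref{pr1.1} and Proposition \ref{newpr5}. Because $\mathfrak{g}_1$ is an ideal and the elements of $\mathfrak{g}_1$ generating $\psi$ commute with $\mathfrak{c}\oplus\mathfrak{g}_2\oplus\cdots\oplus\mathfrak{g}_k$, the extension $\widetilde{\psi}$ is indeed an inner automorphism of $\mathfrak{g}$. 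Then $\widetilde{\psi}(Z)=\widetilde{\psi}(Z_0+Z_1)=Z_0-Z_1$, and Lemma \ref{s1} immediately yields that $Z_0-Z_1$ is a Killing field of constant length on $(M,g)$.

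The second step is to combine this with the fact that $Z_0$ already has constant length on $(M,g)$: this is the transitivity observation made right before Example \ref{exam1} in Section \ref{examsec}, since $Z_0$ lies in the center of $\mathfrak{g}$ and therefore commutes with the transitive algebra $\mathfrak{g}$, so $g(Z_0,Z_0)=\const$ on $M$. Expanding $g(Z_0\pm Z_1,Z_0\pm Z_1)$ and adding the two resulting expressions gives
\[
g(Z_1,Z_1)=\tfrac{1}{2}\bigl(g(Z,Z)+g(\widetilde{\psi}(Z),\widetilde{\psi}(Z))\bigr)-g(Z_0,Z_0),
\]
whose right-hand side is constant on $M$; hence $Z_1$ has constant length. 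As a by-product, subtracting the two expansions gives $g(Z_0,Z_1)=\const$ as well.

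For the last assertion, assume $-\Id\in W$. Choose any Cartan subalgebra $\mathfrak{t}\subset\mathfrak{g}_1$ with $Z_1\in\mathfrak{t}$; such a $\mathfrak{t}$ exists because every element of a compact Lie algebra lies in some maximal torus. By property (v) in Section \ref{rsdsec} there is an inner automorphism $\eta$ of $\mathfrak{g}_1$ that stabilizes $\mathfrak{t}$ and whose restriction to $\mathfrak{t}$ coincides with $-\Id\in W$; in particular $\eta(Z_1)=-Z_1$, so the first part of the proposition applies with $\psi=\eta$. There is no serious obstacle in the whole argument: it rests on a single application of Lemma \ref{s1} combined with a parallelogram-type identity, and the only point needing a brief justification is that extending $\psi$ by the identity across the complementary ideals preserves innerness, which is immediate from the direct-sum ideal structure of $\mathfrak{g}$.
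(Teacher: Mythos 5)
Your proof is correct, and its first and last steps coincide with the paper's: you extend $\psi$ to an inner automorphism of all of $\mathfrak{g}$ acting trivially on $\mathfrak{c}\oplus\mathfrak{g}_2\oplus\cdots\oplus\mathfrak{g}_k$ and invoke lemma \ref{s1} to get that $Z_0-Z_1=Z_0+\psi(Z_1)$ has constant length, and you obtain the automorphism with $\eta(Z_1)=-Z_1$ from property (v) of the Weyl group exactly as the paper does. Where you diverge is the middle step, deducing that $Z_1$ itself has constant length. The paper stays inside its standing framework: since $Z_0\pm Z_1$ both have constant length and $\im L_{Z_0\pm Z_1}=\im L_{Z_1}=\mathfrak{m}$, corollary \ref{orthog} gives $g(Z_0+Z_1,\mathfrak{m})=g(Z_0-Z_1,\mathfrak{m})=0$, hence $g(Z_1,\mathfrak{m})=0$, and proposition \ref{pre1} (criterion 3) concludes. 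You instead use the observation (restated just before the proposition) that the central field $Z_0$ has constant length because $\mathfrak{g}$ is transitive, and then a polarization identity
\[
g(Z_1,Z_1)=\tfrac{1}{2}\bigl(g(Z_0+Z_1,Z_0+Z_1)+g(Z_0-Z_1,Z_0-Z_1)\bigr)-g(Z_0,Z_0),
\]
which is constant on $M$. Both arguments are short and valid; yours is a bit more elementary, bypassing the eigenspace machinery of proposition \ref{pre1} entirely, while the paper's route has the side benefit of producing the pointwise orthogonality relations $g(Z_0,\mathfrak{m})=g(Z_1,\mathfrak{m})=0$, which is the form of information exploited in the neighbouring results such as proposition \ref{newpr7}. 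Your remark that $g(Z_0,Z_1)=\const$ follows by subtraction is a correct bonus, and your justification that the extension of $\psi$ by the identity remains inner (because $\mathfrak{g}_1$ commutes with the complementary ideals) is exactly the point the paper uses implicitly in lemma \ref{pr1.1} and proposition \ref{newpr5}.
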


\begin{proof}
Indeed, $Z_0-Z_1=Z_0+\psi(Z_1)$ has constant length (by lemma \ref{le1}) with the same $\mathfrak{k}$ and $\mathfrak{m}$ as $Z=Z_0+Z_1$ has.
Since $g(Z_0\pm Z_1, \mathfrak{m})=0$ by corollary \ref{orthog}, we
get $g(Z_0, \mathfrak{m})=0$ and $g(Z_1, \mathfrak{m})=0$. Hence, $Z_1$ has constant length by
proposition \ref{pre1}, because $\im(L_{Z})=\im(L_{Z_1})=\mathfrak{m}$.

Finally, if $W$ contains the map $-\Id$, then there is an inner automorphism $\eta$ of $\mathfrak{g}$ such that $\eta(Z_1)=-Z_1$
by properties of the Weyl group $W$ in section \ref{rsdsec}.
\end{proof}

\begin{remark}
Note that the Weyl group $W$  of a simple Lie algebra $\mathfrak{g}$ does not contain the map $-\Id$,
if and only if  $\mathfrak{g}$ is one of the following Lie algebras
{\rm(}see e.g. corollary 5.6.3 in \cite{Bourbaki}{\rm):}
$su(l+1)=A_l,\, l\geq 2, \,\, so(4k+2)=D_{2k+1}, \, k\geq 1, \,\, e_6$.
\end{remark}

\begin{pred}\label{newpr7}
If, in the above notation, $Z_1$ has constant length on $(M,g)$, then $g(Z_0,\mathfrak{g}_1)=0$ on $(M,g)$.
In particular, if  $\mathfrak{g}=\mathfrak{c}\oplus \mathfrak{g}_1$ and $M$ is simply connected in addition, then
$Z_0=0$.
\end{pred}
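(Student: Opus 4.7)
The plan is to show $g(Z_0, \mathfrak{g}_1) = 0$ at every point by splitting $\mathfrak{g}_1 = \mathfrak{k} \oplus \mathfrak{m}$ orthogonally with respect to a fixed $\Ad(G)$-invariant inner product, and handling the two summands separately. Recall that $\mathfrak{k}$ is the centralizer of $Z_1$ in $\mathfrak{g}_1$ and $\mathfrak{m} = \im L_{Z_1} \cap \mathfrak{g}_1$ is its $\langle\cdot,\cdot\rangle$-orthogonal complement.

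Since $Z_0 \in \mathfrak{c}$, the operator $L_Z$ coincides with $L_{Z_1}$ on all of $\mathfrak{g}$, so $\im L_Z = \im L_{Z_1} = \mathfrak{m}$. Applying proposition \ref{pre1} to the constant-length Killing fields $Z$ and $Z_1$ gives $g(Z,\mathfrak{m}) = 0$ and $g(Z_1,\mathfrak{m}) = 0$ on $M$; taking differences yields $g(Z_0,\mathfrak{m}) = 0$. For the $\mathfrak{k}$-piece, I differentiate along $\mathfrak{m}$: for any $U, V \in \mathfrak{m}$,
\begin{equation*}
0 = V \cdot g(Z_0, U) = g([V, Z_0], U) + g(Z_0, [V, U]) = g(Z_0, [V, U]),
\end{equation*}
because $Z_0$ is central so $[V, Z_0] = 0$. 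Hence $g(Z_0, [\mathfrak{m},\mathfrak{m}]) = 0$. By lemma \ref{vslem1}, $\mathfrak{m} + [\mathfrak{m},\mathfrak{m}]$ is an ideal of $\mathfrak{g}_1$; since $\mathfrak{g}_1$ is simple and $\mathfrak{m} \neq 0$ (the latter because $Z_1 \neq 0$ lies in a simple algebra, whose center is trivial, so $L_{Z_1} \neq 0$), this ideal must fill $\mathfrak{g}_1$. Combining $g(Z_0,\mathfrak{m}) = 0$ with $g(Z_0, [\mathfrak{m},\mathfrak{m}]) = 0$ therefore gives $g(Z_0, \mathfrak{g}_1) = 0$.

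For the final assertion, suppose $\mathfrak{g} = \mathfrak{c} \oplus \mathfrak{g}_1$ and $M$ is simply connected. Then the semisimple part $\mathfrak{g}_1$ of $\mathfrak{g}$ acts transitively on $M$ by the same Oni\v s\v cik fact invoked in the proof of theorem \ref{theorem3}, so $\mathfrak{g}_1$ spans the full tangent space at each point of $M$. Combined with $g(Z_0,\mathfrak{g}_1) = 0$ at every point this forces $Z_0 \equiv 0$. I do not anticipate a real obstacle here; the only step that needs a moment's thought is the passage from $g(Z_0, \mathfrak{m}) = 0$ and $g(Z_0, [\mathfrak{m},\mathfrak{m}]) = 0$ to $g(Z_0,\mathfrak{g}_1) = 0$, which is handled by the simplicity of $\mathfrak{g}_1$ via lemma \ref{vslem1}.
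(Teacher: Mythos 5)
Your argument is correct and follows essentially the same route as the paper: from constancy of the lengths of $Z$ and $Z_1$ one gets $g(Z_0,\mathfrak{m})=0$, then differentiating along $\mathfrak{m}$ (using that $Z_0$ is central) gives $g(Z_0,[\mathfrak{m},\mathfrak{m}])=0$, and simplicity of $\mathfrak{g}_1$ together with lemma \ref{vslem1} yields $\mathfrak{m}+[\mathfrak{m},\mathfrak{m}]=\mathfrak{g}_1$, hence $g(Z_0,\mathfrak{g}_1)=0$; the final assertion uses transitivity of the semisimple part on simply connected $M$ exactly as in the paper. The only cosmetic difference is that you invoke proposition \ref{pre1} directly where the paper cites corollary \ref{orthog}, which is just that proposition specialized to $\mathfrak{m}=\im L_Z$.
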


\begin{proof}
Since both $Z=Z_0+Z_1$ and $Z_1$ has constant length on $(M,g)$, then $g(Z,\mathfrak{m})=g(Z_1,\mathfrak{m})=0$,
where $\mathfrak{m}$ is the $\langle \cdot,\cdot \rangle$-orthogonal complement to $\mathfrak{k}$
(the centralizer of $Z_1$ and $Z$ in $\mathfrak{g}_1)$, see corollary \ref{orthog}.
Hence, $g(Z_0,\mathfrak{m})=0$ and $0=\mathfrak{m}\cdot g(Z_0,\mathfrak{m})=g(Z_0,[\mathfrak{m},\mathfrak{m}])$.
Since $\mathfrak{m}+[\mathfrak{m},\mathfrak{m}]$ is an ideal in $\mathfrak{g}_1$ by lemma \ref{vslem1}, then
$\mathfrak{m}+[\mathfrak{m},\mathfrak{m}]=\mathfrak{g}_1$ and consequently, $g(Z_0,\mathfrak{g}_1)=0$.
The second assertion follows from the fact that $\mathfrak{g}_1$ acts transitively on simply connected $M$
(see e.g. proposition 9 on p.~94 in \cite{On1994}).
\end{proof}
\medskip

Now, we are going to discuss four Hermitian pairs in theorem \ref{theorem2} in more details.
\bigskip

{\bf  1. The pair } $(su(p+q), su(p)\oplus su(q)\oplus \mathbb{R})$, $p\geq q \geq 1$. For the Lie algebra $su(p+q)$,
all simple roots $\pi_i$, $1\leq i \leq p+q-1$, are non-compact, see section \ref{rsdsec}. Let us fix $i=p$.
Then $Z_1$ is $\langle \cdot,\cdot\rangle$-orthogonal to all $\pi_j=e_j-e_{j+1}$, $j\neq p$. Up to multiplication by a constant,
we have
\begin{equation}\label{eqsu1}
Z_1=\frac{1}{p+q}\Bigl( q\,(e_1+e_2+\cdots+e_p)-p\,(e_{p+1}+\cdots+e_{p+q}) \Bigr).
\end{equation}
A root $\alpha=e_i-e_j$ is in $I_1(Z)\subset \Delta^+$ if and only if $i\leq p$ and $j\geq p+1$ (in such a case $\langle Z, \alpha\rangle=1$).

Now, suppose that $q\geq 2$ and consider pairwise distinct indices $i,j,k,l$. Using properties of the Weyl group $W$
of $su(p+q)$ (which is the permutation group of the vectors $e_i$, $1\leq i\leq p+q$), we can choose a Killing fields of constant length
of the type $Z_0+w(Z_1)$, $w\in W$, (see lemma \ref{pr1.1})
in such a way, that $\alpha,\beta \in I_1(Z_0+w(Z_1))$, where $\alpha=|e_i-e_j|$ and $\beta=|e_k-e_l|$.
Since $[\mathfrak{v}_\alpha,\mathfrak{v}_\beta]=0$ and $\langle \alpha, \beta \rangle=0$, then we get
\begin{equation}\label{eqsu2}
g(\mathfrak{v}_{\alpha},\mathfrak{v}_\beta)=g(\mathfrak{v}_{\alpha},\beta)=g(\alpha,\mathfrak{v}_\beta)=g(\alpha,\beta)=0
\end{equation}
on $(M,g)$ by lemmas  \ref{le2}, \ref{le8} and proposition \ref{pr9}. In particular, there are $q$ roots $\alpha_i$, $1\leq i \leq q$
such that $g(\alpha_i,\alpha_j)=0$ on $(M,g)$ for $i\neq j$.

Note also that $g(\alpha,\alpha)=\langle \alpha, \alpha\rangle \, g(\alpha,Z)=2g(\alpha,Z)$ by proposition \ref{pr3}
for any $\alpha =e_i-e_j\in I_1(Z)$. Now, we suppose that $Z_0=0$. Then, using the explicit form of $Z=Z_1$ (\ref{eqsu1}) and (\ref{eqsu2}) we get
{\small\begin{eqnarray*}
\frac{p+q}{2}\, g(e_i-e_j,e_i-e_j)=(p+q)g(Z,e_i-e_j)\\
=g\left((e_i-e_j)+\sum_{k\leq p,k\neq i} (e_k-e_j)+  \sum_{l> p,l\neq j} (e_i-e_l),e_i-e_j \right)
\end{eqnarray*}}
Let us fix $r\leq p$ and $s>p$ such that $r\neq i$ and $s\neq j$, then using (\ref{eqsu2}) again,  we get
{\small\begin{eqnarray*}
\frac{p+q}{2}\, g(e_i-e_j,e_i-e_j)=(p+q)g(Z,e_i-e_j)=g(e_i-e_j,e_i-e_j)\\
+ g\left(\sum_{k\leq p,k\neq i} \bigl((e_k-e_r)+(e_r-e_j)\bigr)+  \sum_{l> p,l\neq j} \bigl((e_i-e_s)+(e_s-e_l)\bigr),e_i-e_j \right)\\
=g(e_i-e_j,e_i-e_j)+(p-1)g(e_r-e_j,e_i-e_j)+(q-1)g(e_i-e_s,e_i-e_j),
\end{eqnarray*}}
\!\!that is equivalent to
{\small\begin{eqnarray}\label{eqsu3}
(p+q-2) g(e_i-e_j,e_i-e_j)=2(p-1)g(e_r-e_j,e_i-e_j)+2(q-1)g(e_i-e_s,e_i-e_j).
\end{eqnarray}}
\!\!Using the Weyl group as above, we see that (\ref{eqsu3}) is fulfilled for every distinct $i,j,r,s$.
Interchanging $i$ and $j$, as well as $r$ and $s$, in (\ref{eqsu3}), we get
{\small\begin{eqnarray*}
(p+q-2) g(e_i-e_j,e_i-e_j)=2(q-1)g(e_r-e_j,e_i-e_j)+2(p-1)g(e_i-e_s,e_i-e_j).
\end{eqnarray*}}
\!\!Subtracting (\ref{eqsu3}) from the this equality, we get $(p-q)g(e_r+e_s-e_i-e_j,e_i-e_j)=0$, that
implies $g(e_r-e_j,e_i-e_j)=g(e_i-e_s,e_i-e_j)$ for $p\neq q$. From this and (\ref{eqsu3}) we get
$$
(p+q-2) g(e_i-e_j,e_i-e_j)=2(p+q-2)g(e_i-e_s,e_i-e_j),
$$
that implies $g(e_i-e_j,e_i-e_j-2(e_i-e_s))=0$. Put $\alpha=e_i-e_j$, then we get
$g(\mathfrak{v}_{\alpha},\alpha)=0$ and $g(\mathfrak{v}_{\alpha},e_i-e_s)=0$ for any $s\neq i$ by proposition \ref{pr9}. Therefore,
$g(\mathfrak{v}_{\alpha},\mathfrak{t})=0$ and every $X\in \mathfrak{g}_1=su(p+q)$ has constant length on $(M,g)$ by
proposition~\ref{newprort}.
Then we get

\begin{pred}\label{supred} If $Z=Z_1$ {\rm(}i.e. $Z_0=0${\rm)} for the pair  $(su(p+q), su(p)\oplus su(q)\oplus \mathbb{R})$ and $p\neq q\geq 2$, then
every $X\in \mathfrak{g}_1=su(p+q)$ has constant length on $(M,g)$.
\end{pred}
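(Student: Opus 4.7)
The strategy is to exploit the root space decomposition of $su(p+q)$ together with the Weyl group action and the explicit form of $Z$ under the normalization $Z_0 = 0$. I would first fix the non-compact simple root $\pi_p$, so that up to scaling $Z = Z_1 = \frac{1}{p+q}\bigl(q(e_1+\cdots+e_p) - p(e_{p+1}+\cdots+e_{p+q})\bigr)$, with $I_1(Z) \subset \Delta^+$ consisting of the roots $e_i - e_j$ with $i \le p < j$. These are exactly the roots on which $\langle Z, \cdot\rangle = 1$.

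Next, using Lemma~\ref{pr1.1} together with the fact that the Weyl group of $su(p+q)$ permutes the basis vectors $e_i$, I would observe that for any four pairwise distinct indices, one can arrange both roots $\alpha = |e_i - e_j|$ and $\beta = |e_k - e_l|$ to lie in $I_1(Z_0 + w(Z_1))$ for a suitable $w \in W$. Since such $\alpha$ and $\beta$ are strongly orthogonal and satisfy $\langle \alpha, \beta\rangle = 0$, Lemmas~\ref{le2}, \ref{le8} and Proposition~\ref{pr9} yield the orthogonality relations $g(\mathfrak{v}_\alpha, \mathfrak{v}_\beta) = g(\mathfrak{v}_\alpha, \beta) = g(\alpha, \mathfrak{v}_\beta) = g(\alpha, \beta) = 0$ on $(M,g)$, which is the content of (\ref{eqsu2}).

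The next step is a direct calculation: for $\alpha = e_i - e_j \in I_1(Z)$, Proposition~\ref{pr3} gives $g(\alpha,\alpha) = 2\, g(\alpha, Z)$, and expanding $g(Z, \alpha)$ via the explicit formula for $Z$ and the orthogonalities above yields, for any auxiliary $r \le p$, $s > p$ distinct from $i, j$, the relation
\[
(p+q-2)\, g(\alpha,\alpha) = 2(p-1)\, g(e_r - e_j, \alpha) + 2(q-1)\, g(e_i - e_s, \alpha).
\]
The crucial trick is that interchanging the roles $i \leftrightarrow j$ and $r \leftrightarrow s$, and using the Weyl group once more, produces the analogous identity with the coefficients $p-1$ and $q-1$ swapped; subtracting yields a factor of $p - q$, and here is precisely where the hypothesis $p \ne q$ is essential. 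Dividing by $p - q$ gives $g(e_r - e_j, \alpha) = g(e_i - e_s, \alpha)$, which plugged back produces $g(\alpha, \alpha) = 2\, g(e_i - e_s, \alpha)$. By Proposition~\ref{pr9} this forces $g(\mathfrak{v}_\alpha, \alpha) = 0$ and $g(\mathfrak{v}_\alpha, e_i - e_s) = 0$ for every admissible $s$; since the vectors $e_i - e_s$ together with $\alpha$ span $\mathfrak{t}$, one gets $g(\mathfrak{v}_\alpha, \mathfrak{t}) = 0$, and Proposition~\ref{newprort} finishes the proof.

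The main obstacle is exactly the step where $p \ne q$ is invoked. When $p = q$, the subtraction yields $0 = 0$ and the argument collapses; and indeed Example~\ref{exam5} shows that the conclusion genuinely fails in that case. A minor bookkeeping point is that each application of the Weyl group replaces the constant-length field $Z$ by $Z_0 + w(Z_1)$, so one must check that the derived orthogonality of the roots and root spaces (which lives on $(M,g)$, not in the Lie algebra) is preserved under these substitutions; this is exactly the content of Lemma~\ref{s1}.
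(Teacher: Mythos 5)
Your proposal follows essentially the same route as the paper's own proof: the same normalization of $Z_1$, the same Weyl-group argument producing the orthogonality relations (\ref{eqsu2}), the same expansion of $g(Z,\alpha)$ leading to (\ref{eqsu3}), the same index-swap and subtraction exploiting $p\neq q$, and the same conclusion via proposition \ref{pr9} and proposition \ref{newprort}. The argument is correct, including the observation that lemma \ref{s1} (via lemma \ref{pr1.1}) justifies replacing $Z$ by $Z_0+w(Z_1)$ at each Weyl-group step.
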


Note also that for $p=q$ there is $w\in W$ such that $w(Z_1)=-Z_1$, hence, $Z_1$ is a Killing field of constant length on $(M,g)$
by proposition \ref{newpr6.5}.
\bigskip

{\bf  2. The pair $(so(2n),su(n)\oplus \mathbb{R})$, $n\geq 5$.} For the Lie algebra $so(2n)$,
the are three non-compact roots, $\pi_1$ and $\pi_{n-1}$, and $\pi_n$, see section \ref{rsdsec}.
For this pair we should choose $\pi_n$
($\pi_{n-1}$ lead to the same result due to a suitable automorphism of $so(2n)$, and $\pi_1$ corresponds to another pair).
Then $Z_1$ is $\langle \cdot,\cdot\rangle$-orthogonal to all $\pi_i=e_i-e_{i+1}$, $i<n$. Up to multiplication by a constant,
we have
\begin{equation}\label{eqsonn1}
Z_1=\frac{1}{2}\Bigl(e_1+e_2+\cdots+e_{n-1} +e_n\Bigr).
\end{equation}
Clear that $I_1(Z)\subset \Delta^+$ consists of the root $\alpha=e_i+e_j$, $1\leq i<j \leq n$, and $\langle Z, \alpha\rangle=1$ for such $\alpha$.

Let us choose distinct indices $i,j,k,l$ and put $\alpha=e_i+e_j$, $\beta=e_k+e_l$.
Since $[\mathfrak{v}_\alpha,\mathfrak{v}_\beta]=0$ and $\langle \alpha, \beta \rangle=0$, then we get
\begin{equation}\label{eqsonn2}
g(\mathfrak{v}_{\alpha},\mathfrak{v}_\beta)=g(\mathfrak{v}_{\alpha},\beta)=g(\alpha,\mathfrak{v}_\beta)=g(\alpha,\beta)=g(e_i+e_j,e_k+e_l)=0
\end{equation}
on $(M,g)$ by lemmas  \ref{le2}, \ref{le8} and proposition \ref{pr9}.
The Weyl group $W$ permutes $e_i$, $1\leq i \leq n$, and changes the signs of even numbers of the basic vectors.
Using $Z_0+w(W)$ for a suitable $w\in W$ instead of $Z=Z_0+Z_1$, we easily get that $g(e_i+e_j,e_k-e_l)=0$ too (here we have used that $n\geq 5$).
Therefore, $g(e_i+e_j,e_k)=0$. Using the Weyl group one more time, we get $g(e_i,e_k)=0$ on $(M,g)$ for any $i\neq k$.

\begin{pred}\label{sonnpred} If a Killing field $Z=Z_0+Z_1$ has constant length on $(M,g)$ for the pair $(so(2n),su(n)\oplus \mathbb{R})$, then
there exist $n$ pairwise commuting $U_i\in so(2n)$, $1\leq i \leq n$, such that $\langle U_i, U_j \rangle =\delta_{ij}$,
$Z_1=c\cdot \sum_{i=1}^n{U_i}$ for some positive $c\in \mathbb{R}$
and $g(U_i,U_j)=0$ for $i\neq j$ on $(M,g)$.
\end{pred}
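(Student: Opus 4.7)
The strategy is to recognize that, after a suitable inner automorphism of $\mathfrak{g}_1=so(2n)$, the Killing field $Z_1$ takes the explicit form \eqref{eqsonn1}, so that the standard orthonormal basis $e_1,\ldots,e_n$ of the Cartan subalgebra $\mathfrak{t}$ furnishes the desired vectors $U_i$. The plan proceeds in four steps.

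\textit{Step 1 {\rm(}reduction to the standard form{\rm)}.} Using property (v) of the Weyl group from section \ref{rsdsec}, pick an inner automorphism $\psi$ of $\mathfrak{g}_1$ sending $Z_1$ into a fixed Cartan subalgebra $\mathfrak{t}$ and into the closed Weyl chamber $C(\Delta^+)$ for a chosen positive root system $\Delta^+$. By lemma \ref{s1}, $Z_0+\psi(Z_1)$ still has constant length on $(M,g)$, so I may work with $\psi(Z_1)$ in place of $Z_1$. Being in the Hermitian case 2) of theorem \ref{theorem2}, with centralizer $\mathfrak{k}=su(n)\oplus\mathbb{R}$ inside $so(2n)$, $\psi(Z_1)$ must be $\langle\cdot,\cdot\rangle$-orthogonal to every simple root except the non-compact one $\pi_n$ (cf. remark \ref{ncrem}). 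Solving this linear system in $\mathfrak{t}=\mathbb{R}^n$ forces $\psi(Z_1)=c\cdot\tfrac12(e_1+\cdots+e_n)$ for some $c>0$.

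\textit{Step 2 {\rm(}choice of the $U_i$ in the standard frame{\rm)}.} Set $U_i^{\mathrm{st}}:=e_i\in\mathfrak{t}\subset so(2n)$ for $1\le i\le n$. These vectors pairwise commute because they lie in the abelian Cartan subalgebra, and by construction they satisfy $\langle U_i^{\mathrm{st}},U_j^{\mathrm{st}}\rangle=\delta_{ij}$. Step 1 then gives $\psi(Z_1)=(c/2)\sum_{i=1}^{n}U_i^{\mathrm{st}}$, which is the required linear relation with positive constant.

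\textit{Step 3 {\rm(}the $g$-orthogonality, technical core{\rm)}.} I invoke the calculation carried out in the two paragraphs just before the statement of the proposition. For any four pairwise distinct indices $i,j,k,l$, the roots $e_i+e_j$ and $e_k+e_l$ lie in $I_1(\psi(Z_1))$, are $\langle\cdot,\cdot\rangle$-orthogonal, and have $[\mathfrak{v}_{e_i+e_j},\mathfrak{v}_{e_k+e_l}]=0$. Applying lemmas \ref{le2} and \ref{le8} together with proposition \ref{pr9} yields $g(e_i+e_j,e_k+e_l)=0$ on $(M,g)$, and in particular $g(e_i+e_j,e_k)=g(e_i,e_k+e_l)=0$ term-by-term at the end of the derivation. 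Passing from $\psi(Z_1)$ to $Z_0+w(\psi(Z_1))$ via lemma \ref{pr1.1} for suitable Weyl group elements $w$ (permutations of the $e_i$ and sign changes on even numbers of them), and repeating the argument with the signs of $e_k,e_l$ flipped, gives in addition $g(e_i+e_j,e_k-e_l)=0$, hence $g(e_i+e_j,e_k)=0$; one final Weyl permutation collapses this to $g(e_i,e_k)=0$ on $M$ for every $i\neq k$. This is the step where $n\ge 5$ is used essentially, since the sign-flip manoeuvre requires four distinct indices $i,j,k,l$ to still leave an extra index available for the Weyl element.

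\textit{Step 4 {\rm(}transport back to $Z_1$ and main obstacle{\rm)}.} Set $U_i:=\psi^{-1}(e_i)$. These are pairwise commuting vectors in $so(2n)$, still $\langle\cdot,\cdot\rangle$-orthonormal (since $\psi$ preserves $\langle\cdot,\cdot\rangle$), and satisfy $Z_1=(c/2)\sum_{i=1}^{n}U_i$. Applying lemma \ref{s1} to the identically zero function $g(e_i,e_j)$ from Step 3 with the inner automorphism $\psi^{-1}$ gives $g(U_i,U_j)=0$ on $(M,g)$ for $i\neq j$. This concludes the proof. The main obstacle is Step 3, where one must extract $g$-orthogonality of the individual basis vectors $e_i$ out of $g$-orthogonality of certain paired combinations $e_i+e_j$; the hypothesis $n\ge 5$ is precisely what provides enough room in the Weyl-group bookkeeping to carry out the unpairing.
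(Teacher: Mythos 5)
Your proposal is correct and follows essentially the same route as the paper: reduce $Z_1$ to the standard form $\tfrac12(e_1+\cdots+e_n)$ in a Cartan subalgebra, take the $U_i$ to be (conjugates of) the $e_i$, and derive $g(e_i,e_k)=0$ from the strong orthogonality of the roots $e_i+e_j$, $e_k+e_l$ via lemmas \ref{le2}, \ref{le8}, proposition \ref{pr9} and Weyl-group sign changes, with $n\ge 5$ supplying the auxiliary index needed for an even sign flip. The only difference is that you make explicit the conjugation bookkeeping (the automorphism $\psi$ and the transport of $g(e_i,e_j)=0$ back to the $U_i$ via lemma \ref{s1}), which the paper leaves implicit.
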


Note that $\delta_{ij}$ above means the Kronecker delta, i.e. $\delta_{ij}=1$ for $i=j$ and $\delta_{ij}=0$ for $i\neq j$.
It should be noted also that for even $n$, the Weyl group contained $-\Id$, Therefore, $Z_1$ is also Killing field of constant length on $(M,g)$ in this case
by proposition \ref{newpr6.5}.
\bigskip

{\bf  3.  The pair $(so(p+2), so(p)\oplus \mathbb{R})$, $p \geq 5$.} In this case we should distinguish the subcase $p=2l-2$ for some integer $l\geq 4$
and the subcase $p=2l-1$ for some integer $l\geq 3$.

If $so(p+2)=so(2l)$, we should consider non-compact root $\pi_1=e_1-e_2$,
see section~\ref{rsdsec}.
Then $Z_1$ is $\langle \cdot,\cdot\rangle$-orthogonal to all $\pi_i$, $2\leq i\leq n$. Up to multiplication by a constant,
we have
\begin{equation}\label{eqson1}
Z_1=e_1.
\end{equation}
Clear that $I_1(Z)\subset \Delta^+$ consists of the root $\alpha=e_1\pm e_i$, $2\leq i \leq n$, and $\langle Z, \alpha\rangle=1$ for such $\alpha$.

If $so(p+2)=so(2l+1)$, a unique non-compact root in $so(2l+1)$ is also $\pi_1=e_1-e_2$ and we have $Z$ as in (\ref{eqson1}).
Note, however, that in this subcase $Z_1$ is also the root.
$I_1(Z)\subset \Delta^+$ consists of the roots $\alpha=e_1\pm e_i$, $2\leq i \leq n$, and $e_1$.

The next arguments work both $so(p+2)=so(2l)$ and $so(p+2)=so(2l+1)$.
Let $F_{i,j}$ be a $(p+2)\times(p+2)$-matrix $(a_{\,t,s})$ with all zero entries excepting $a_{i,j}=-1$ and $a_{j,i}=1$.
We will identify $F_{2i-1,2i}$ with $e_i$ in $so(p+2)$, see e.g. chapter 8 in~\cite{Bes}. Hence $F_{1,2}=e_1=Z_1$.
If we take $a=\diag\left(\!\!\left(%
\begin{array}{cc}
  0 & 1 \\
  1 & 0 \\
\end{array}%
\right)\!\!,1,\dots,\!1\!\!\right)\in SO(p+2)$, then $\Ad(a)(F_{1,2})=-F_{1,2}$. Therefore, $Z_1=F_{1,2}$ is also constant length on $(M,g)$
by proposition~\ref{newpr6.5}.
In what follows, we assume that $Z=Z_1=F_{1,2}$.

Now, for all $a\in SO(p+2)$ and all $c\in \mathbb{R}$, the vector $\Ad(a)(cF_{1,2})$ has constant length on $(M,g)$ by lemma \ref{s1}.
From this observation we get that all $X\in \mathfrak{f}$ have constant length on $(M,g)$,
where $\mathfrak{f}$ is the subalgebra of $so(p+2)$, spanned by the Killing vector fields $F_{1,2}$, $F_{2,3}$, and $F_{1,3}$.

For every $X,Y \in \mathfrak{f}$ we have $2g(X,Y)=g(X+Y,X+Y)-g(X,X)-g(Y,Y)=\const$, because all $X\in \mathfrak{f}$
have constant length. Hence,  $2\nabla_XY=[X,Y]\in \mathfrak{f}$ by lemma \ref{lemma3}.
This means that the distribution $\mathfrak{f}$ is autoparallel on $(M,g)$.

Moreover, $R(X,Y)Y=-\nabla_Y\nabla_Y X=-\frac{1}{4}[Y,[Y,X]]$ by lemma \ref{lemma5}.
But it is easy to see that $[Y,[Y,X]]=-X$ for any $X,Y\in \mathfrak{f}$ such that $\langle X,X\rangle =\langle Y,Y\rangle =\langle F_{1,2},F_{1,2}\rangle$ and
$\langle X,Y\rangle=0$ ($\mathfrak{f}$ is isomorphic to $so(3)$). Therefore, $R(X,Y)Y=\frac{1}{4}X$.
Moreover, $g(X,Y)=0$ by proposition \ref{pre1} and $g(Y,Y)=g(F_{1,2},F_{1,2})$ by lemma \ref{s1}.
Hence, the sectional curvature of the plane spanned by $X$ an $Y$ (at every point $x\in M$) is equal
$$
\frac{g(R(X,Y)Y,X)}{g(X,X)g(Y,Y)-g(X,Y)^2}=\frac{1}{4g(Y,Y)}=\frac{1}{4g(F_{1,2},F_{1,2})}=\const.
$$

It is clear, that the subalgebra $\psi(\mathfrak{f})$ in $so(p+2)$, where $\psi$ is any inner automorphism of $so(p+2)$,
also consist of Killing vector fields of constant length. In particular, any $F_{i,j}\in so(p+2)$, $i\neq j$, has constant length on $(M,g)$.
From this we get
$$
0=F_{k,j} \cdot g(F_{i,k},F_{i,k})=2 g ([F_{k,j},F_{i,k}],F_{i,k})=2g(F_{i,j},F_{i,k}),\quad i\neq k.
$$
therefore, the linear subspace $P_i:=\Lin(F_{i,j}\,|\, j\neq i)$ in $so(p+2)$ consists of Killing vector fields of constant length.
Therefore, we get

\begin{pred}\label{sonpred}
If a Killing vector field $Z=Z_0+Z_1$ has constant length on $(M,g)$ for the pair $(so(p+2),so(p)\oplus \mathbb{R})$,
then the following assertions hold:

{\rm 1)} The Killing field $Z_1$ has constant length too;

{\rm 2)} There is a $3$-dimensional Lie subalgebra $\mathfrak{f}$ in $so(p+2)$, that consists of Killing fields of constant length and
such that all orbits of the subgroup $\exp(\mathfrak{f})\subset \exp(\mathfrak{g})=G$ in $(M,g)$ are totally geodesic and have
{\rm(}one and the same{\rm)} constant positive curvature;

{\rm 3)} There are $(p+1)$-dimensional linear subspaces in $so(p+2)$, that consist of Killing vector fields of constant length.
\end{pred}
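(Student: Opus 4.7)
The plan is to assemble the observations developed in the paragraphs immediately preceding the proposition, choosing a convenient normalization. First I would take $\pi_1=e_1-e_2$ as the chosen non-compact simple root, so that up to a positive multiple $Z_1=e_1=F_{1,2}$ in the skew-symmetric matrix picture of $so(p+2)$. For (1), I would exhibit the block-diagonal element $a=\diag(J,1,\dots,1)\in SO(p+2)$, where $J=\bigl(\begin{smallmatrix}0&1\\1&0\end{smallmatrix}\bigr)$, observe that $\Ad(a)(F_{1,2})=-F_{1,2}$, and invoke proposition \ref{newpr6.5} directly.

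For (2), I would set $\mathfrak{f}:=\Lin(F_{1,2},F_{2,3},F_{1,3})\cong so(3)$. Lemma \ref{s1} shows that every $\Ad(a)(cF_{1,2})$ is a constant-length Killing field for $a\in G$ and $c\in\mathbb{R}$; specializing $a\in\exp(\mathfrak{f})\cong SO(3)$ and using transitivity of $SO(3)$ on the $\langle\cdot,\cdot\rangle$-spheres of $\mathfrak{f}$, I would conclude that every $X\in\mathfrak{f}$ has constant length on $(M,g)$. Polarization then gives $g(X,Y)=\const$ for all $X,Y\in\mathfrak{f}$, and lemma \ref{lemma3} yields $2\nabla_XY=[X,Y]\in\mathfrak{f}$, so $\mathfrak{f}$ is autoparallel and the $\exp(\mathfrak{f})$-orbits are totally geodesic. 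For the curvature value, I would pick $X,Y\in\mathfrak{f}$ with $\langle X,X\rangle=\langle Y,Y\rangle=\langle F_{1,2},F_{1,2}\rangle$ and $\langle X,Y\rangle=0$; inside $so(3)$ one has $[Y,[Y,X]]=-X$, so lemma \ref{lemma5} gives $R(X,Y)Y=\tfrac14 X$. Lemma \ref{s1} forces $g(X,X)=g(F_{1,2},F_{1,2})$ pointwise, while $g(X,Y)=0$ pointwise follows from proposition \ref{pre1} applied to the constant-length field $X$ (since $\langle X,Y\rangle=0$ places $Y$ inside $\im L_X$ within $\mathfrak{f}$). The sectional curvature therefore equals the positive constant $\bigl(4g(F_{1,2},F_{1,2})\bigr)^{-1}$ at every point of every orbit.

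For (3), I would first observe that each $F_{i,j}$, $i\neq j$, is conjugate to $F_{1,2}$ by an inner automorphism of $so(p+2)$ realized by a suitable signed permutation matrix, so lemma \ref{s1} yields $F_{i,j}$ of constant length with $g(F_{i,j},F_{i,j})=g(F_{1,2},F_{1,2})$ on $(M,g)$. For fixed $i$ and distinct $j,k\neq i$, the Killing-field identity
\[
0=F_{k,j}\cdot g(F_{i,k},F_{i,k})=2g([F_{k,j},F_{i,k}],F_{i,k})=2g(F_{i,j},F_{i,k})
\]
gives pointwise orthogonality of the $F_{i,j}$ with fixed first index. Then every $X=\sum_{j\neq i}c_jF_{i,j}\in P_i:=\Lin(F_{i,j}\mid j\neq i)$ satisfies $g(X,X)=\bigl(\sum_j c_j^2\bigr)\,g(F_{1,2},F_{1,2})=\const$, so $P_i$ is a $(p+1)$-dimensional space of constant-length Killing fields.

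The step I expect to be the main obstacle is the upgrade in (2) from "$cF_{1,2}$ has constant length" to "every $X\in\mathfrak{f}$ has constant length": this rests crucially on transitivity of $SO(3)$ on the spheres of its Lie algebra, combined with lemma \ref{s1}. Once this is in hand, the autoparallel property and the explicit constant-curvature value follow by a routine use of lemmas \ref{lemma3} and \ref{lemma5} together with the standard $so(3)$ bracket identity.
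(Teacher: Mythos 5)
Your proposal is correct and follows essentially the same argument as the paper's own proof: normalize $Z_1=F_{1,2}$, get assertion 1) from proposition \ref{newpr6.5} via $\Ad(a)(F_{1,2})=-F_{1,2}$, obtain constant length on all of $\mathfrak{f}\cong so(3)$ from lemma \ref{s1} together with transitivity of $SO(3)$ on spheres in its Lie algebra, deduce autoparallelism and the curvature value $\bigl(4g(F_{1,2},F_{1,2})\bigr)^{-1}$ from lemmas \ref{lemma3}, \ref{lemma5} and proposition \ref{pre1}, and prove 3) by conjugating to all the $F_{i,j}$ and using the pointwise relations $g(F_{i,j},F_{i,k})=0$. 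The only cosmetic remark is that $\diag(J,1,\dots,1)$ has determinant $-1$ (a slip also present in the paper), so to stay in $SO(p+2)$ one should flip one further diagonal entry, e.g. take $\diag(J,-1,1,\dots,1)$, which still sends $F_{1,2}$ to $-F_{1,2}$.
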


Note that linear subspaces, that consist of Killing vector fields of constant length are called {\it Clifford-Killing spaces} in \cite{BerNik2009},
where Clifford-Killing spaces for Euclidean spheres were studied in particular. The authors of \cite{XuWolf2014} use the term
{\it Clifford-Killing vector fields} for Killing vector fields of constant length.
\bigskip

{\bf  4.  The pair $(sp(n),su(n)\oplus \mathbb{R})$, $n\geq 2$.} A unique non-compact root in
$sp(n)$ is $\pi_n=2e_n$. Therefore, $Z_1$ is $\langle \cdot,\cdot\rangle$-orthogonal to all $\pi_i=e_i-e_{i+1}$, $i<n$. Up to multiplication by a constant,
we have
\begin{equation}\label{eqspnn1}
Z_1=\frac{1}{2}\Bigl(e_1+e_2+\cdots+e_{n-1} +e_n\Bigr).
\end{equation}
Clear that $I_1(Z)\subset \Delta^+$ consists of the roots $2e_i$, $1\leq i \leq n$, and
$e_i+e_j$, $1\leq i<j \leq n$.
Let us choose distinct indices $i,j$ and put $\alpha=2e_i$, $\beta=2e_j$.
Since $[\mathfrak{v}_\alpha,\mathfrak{v}_\beta]=0$ and $\langle \alpha, \beta \rangle=0$, then we get
\begin{equation}\label{eqspnn2}
g(\mathfrak{v}_{\alpha},\mathfrak{v}_\beta)=g(\mathfrak{v}_{\alpha},\beta)=g(\alpha,\mathfrak{v}_\beta)=g(\alpha,\beta)=4g(e_i,e_j)=0
\end{equation}
on $(M,g)$ by lemmas  \ref{le2}, \ref{le8} and proposition \ref{pr9}.

Since the Weyl group $W$ for $sp(n)$ contains the map $-\Id$,
then the Killing fields $Z_1$ also has constant length on $(M,g)$
by proposition \ref{newpr6.5}.

\begin{pred}\label{sppred} If a Killing field $Z=Z_0+Z_1$ has constant length on $(M,g)$ for the pair $(sp(n),su(n)\oplus \mathbb{R})$, then
the Killing field $Z_1$ also has constant length on $(M,g)$ and
there exist $n$ pairwise commuting $U_i\in sp(n)$, $1\leq i \leq n$, such that $\langle U_i, U_j \rangle =\delta_{ij}$,
$Z_1=c\cdot \sum_{i=1}^n{U_i}$ for some positive $c\in \mathbb{R}$
and $g(U_i,U_j)=0$ for $i\neq j$ on $(M,g)$.
\end{pred}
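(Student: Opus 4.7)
The plan is to read this proposition as a direct synthesis of the computations carried out in the paragraphs immediately preceding its statement, together with the well-known fact that the Weyl group of type $C_n$ contains $-\Id$. I would proceed in three short steps.

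First, I would establish that $Z_1$ itself is of constant length. The Weyl group of $sp(n)=C_n$ is the hyperoctahedral group of signed permutations of $\{e_1,\dots,e_n\}$; in particular it contains $-\Id$ (equivalently, $C_n$ is not among the exceptions $A_l$, $D_{2k+1}$, $e_6$ listed in the remark following Proposition \ref{newpr6.5}). Hence there is an inner automorphism $\psi$ of $\mathfrak{g}_1$ with $\psi(Z_1)=-Z_1$, and Proposition \ref{newpr6.5} directly yields the constant length of $Z_1$ on $(M,g)$.

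Next, I would produce the vectors $U_i$. By (\ref{eqspnn1}), after absorbing a positive scalar inherent in the Cartan chamber normalization of Section \ref{shcsec}, one has $Z_1=\frac{c'}{2}\sum_{i=1}^n e_i$, where $\{e_1,\dots,e_n\}$ is an $\langle\cdot,\cdot\rangle$-orthonormal basis of the Cartan subalgebra $\mathfrak{t}\subset\mathfrak{g}_1$. Setting $U_i:=e_i$ and $c:=c'/2>0$, the relations $\langle U_i,U_j\rangle=\delta_{ij}$, $[U_i,U_j]=0$, and $Z_1=c\sum_i U_i$ are immediate from the abelian structure of $\mathfrak{t}$ and the orthonormality convention. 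The remaining claim $g(U_i,U_j)=0$ on $(M,g)$ for $i\neq j$ is precisely equation (\ref{eqspnn2}), which was already derived from the fact that $2e_i$ and $2e_j$ are strongly orthogonal roots lying in $I_1(Z)$, so that $[\mathfrak{v}_{2e_i},\mathfrak{v}_{2e_j}]=0$ and $\langle 2e_i,2e_j\rangle=0$, combined with Lemma \ref{le2}, Lemma \ref{le8}, and Proposition \ref{pr9}.

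The main difficulty is, frankly, not much of an obstacle: the genuinely nontrivial input is the metric orthogonality $g(e_i,e_j)=0$, and it has already been extracted in (\ref{eqspnn2}) precisely so that the present proposition reduces to bookkeeping. The one small point worth verifying is that the scaling constant $c$ is positive, which follows from the convention that $Z_1$ lies in the Cartan chamber $C(\Delta^+)$, forcing $\langle Z_1,\pi_n\rangle=\langle Z_1,2e_n\rangle>0$ and hence $c'>0$.
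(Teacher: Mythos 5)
Your proposal is correct and follows essentially the same route as the paper: constant length of $Z_1$ via proposition \ref{newpr6.5} because the Weyl group of $C_n$ contains $-\Id$, and the orthogonality $g(e_i,e_j)=0$ from the strongly orthogonal roots $2e_i,2e_j\in I_1(Z)$ via lemmas \ref{le2}, \ref{le8} and proposition \ref{pr9}, i.e.\ equation (\ref{eqspnn2}), with the $U_i$ being the suitably normalized $e_i$ in the Cartan subalgebra. Nothing is missing; the remarks on the normalization and the positivity of $c$ are exactly the bookkeeping the paper leaves implicit.
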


\section{Unsolved questions}

Results of this paper can be used for the classification of homogeneous Riemannian manifold with Killing vector fields of some special kinds.
We state some unsolved questions and problems in this direction.

\begin{problem}\label{pr1}
Classify  homogeneous Riemanninan spaces $(G/H,g)$ with nontrivial Killing vector fields of constant length, where $G$ is simple.
\end{problem}
Recall, that normal homogeneous Riemanninan spaces $(G/H,g)$ with Killing fields of constant length, where $G$ is simple, are classified
in \cite{XuWolf2014}. On the other hand, the set of $G$-invariant metrics on a space $G/H$ could have any dimension even for
simple~$G$.

\begin{vopros}\label{vop2}
Let $(M, g)$ be a Riemannian homogeneous manifold, $G$ be its full connected isometry group, and
$Z \in \mathfrak{g}$ be a Killing vector field of constant length on $(M, g)$.
Does~$\mathfrak{k}$, the centralizer of $Z$ in $\mathfrak{g}$, acts transitively on $M$?
\end{vopros}
Note, that for symmetric spaces  $(M=G/H, g)$ we have an affirmative answer to this question (see lemma 3 in \cite{BerNik2008n}).
Of course, question \ref{vop2} is interesting even under an additional assumption that the group $G$ is simple.
\medskip

There is a natural generalization of normal homogeneous spaces.
Recall, that a Riemannian manifold $(M=G/H,g)$, where $H$ is a compact subgroup
of a Lie group $G$ and $g$ is a $G$-invariant Riemannian metric,
is called a geodesic orbit space
if any geodesic $\gamma$ of $M$ is an orbit of
1-parameter subgroup of the group $G$, detailed information on this class of homogeneous Riemannian manifolds one can find e.g. in
\cite{AA, AlNik, BerNikBook, KV, Ta}.
It is known that all normal homogeneous space are geodesic orbit. The following problem is natural.

\begin{problem}\label{pr2}
Classify  geodesic orbit Riemannian space with nontrivial Killing vector fields of constant length.
\end{problem}

Recall the following result of \cite{Nik2013}:
If $(M=G/H,g)$ is a geodesic orbit Riemannian space, $\mathfrak{g}$ is its Lie algebra of Killing fields,
and $\mathfrak{a}$ is an abelian ideal in $\mathfrak{g}$, then every $X\in \mathfrak{a}$
has constant length on $(M,g)$.
Hence it suffices to study only homogeneous spaces $G/H$ with semisimple $G$ in problem \ref{pr2}.
\medskip

Note that the examples from section \ref{examsec} cover all possibilities in theorem \ref{theorem2},
excepting the case {\rm 3)}, where we have examples only for specific values of $p$.

\begin{vopros}\label{vop3}
Is there a Riemannian homogeneous space $(M=G/H, g)$  with a Killing vector field of constant length
$Z \in \mathfrak{g}$, corresponded to the case {\rm 3)} in theorem~\ref{theorem2}, i.e.
$(\mathfrak{g}_1,\mathfrak{k})=(so(p+2), so(p)\oplus \mathbb{R})$, for $p=6$ and $p\geq 8$?
\end{vopros}

For $p=5$ and $p=7$ see examples \ref{exam6} and \ref{exam7}.
Note that proposition \ref{sonpred} gives serious restrictions on a Riemannian manifold  $(M=G/H, g)$ as in question \ref{vop3}.
\bigskip

\bigskip
\bigskip

\end{document}